\documentclass[12pt]{amsart}
\usepackage{amssymb,latexsym, amscd,pb-diagram}
\usepackage{sseq}
\usepackage[all]{xy}
\usepackage[square, numbers]{natbib}
\usepackage{graphicx}
\usepackage{mathrsfs}
\usepackage{color}
\usepackage{amsmath}
\usepackage{multirow}
\usepackage{hyperref}
\usepackage{todonotes}

\usepackage{blindtext}
\usepackage{geometry}
\usepackage[utf8]{inputenc}
\usepackage[T1]{fontenc}
\usepackage{amsfonts,  amsthm}
\usepackage{mathtools}
\usepackage{txfonts}
\usepackage{mathrsfs}
\usepackage{ textcomp }

 \newtheorem{theorem}{Theorem}[section]
 \newtheorem{corollary}[theorem]{Corollary}
  
 \newtheorem{lemma}[theorem]{Lemma}
 \newtheorem{proposition}[theorem]{Proposition}
\newtheorem*{theorem*}{Theorem}
 \theoremstyle{definition}
 \newtheorem{definition}[theorem]{Definition}
\theoremstyle{definition}
\newtheorem*{definition*}{Definition}
 \theoremstyle{definition}

 \newtheorem{remark}[theorem]{Remark}
 
 \numberwithin{equation}{section}
\newtheorem*{conjecture*}{Conjecture}

\newcounter{commentcounter}

\begin{document}

\title[Equivariant Milnor map]{Equivariant Milnor map}



\author{Mathilda Campillo}
\address{Mathematics Department, Stony Brook University, Stony Brook NY, 11794-3651, USA.}
\email{mathilda.campillo@stonybrook.edu}

\author{Yuanxin Guan}
\address{
School of Mathematical Sciences, Fudan University, 220 Handan Rd., Yangpu District, Shanghai, 200433, People’s Republic of China}
\email{yxguan21@m.fudan.edu.cn}

\author{Zhi L\"u}
\address{
School of Mathematical Sciences, Fudan University, 220 Handan Rd., Yangpu District, Shanghai, 200433, People’s Republic of China}
\email{zlu@fudan.edu.cn}

\author{Bernardo Uribe}
\address{Departamento de Matem\'{a}ticas, Universidad Nacional de Colombia, 
 Carrera 65 Nro. 59A - 110, Medell\'{i}n,
050034, Colombia.}
\email{buribej@unal.edu.co, buribe@gmail.com}


\subjclass[2020]{
(primary) 57R77, 	57R80, (secondary) 	55N22}
\date{\today}
\keywords{Equivariant unitary bordism, magnetic unitary bordism, unitary bordism, unoriented bordism.}
\begin{abstract}
The Milnor map is the homomorphism from the unitary bordism ring to the unoriented bordism ring, halving the dimension, that maps the unitary bordism classes of the complex Milnor hypersurfaces to the unoriented bordism classes of their real points. In this work, we propose to generalize this construction to the equivariant setup and we show the existence of such a map for the equivariant unitary groups of the circle and the cyclic group of order two. Furthermore, we relate the kernel of these Milnor maps to the magnetic unitary equivariant bordism groups of free conjugations.
\end{abstract}

\maketitle

\tableofcontents

\section*{Introduction} 

A remarkable result of Conner and Floyd  states that any smooth complex algebraic variety $V_\mathbb{C}$, whose real points $V_\mathbb{R}$ are also smooth,  is cobordant to the product $V_\mathbb{R} \times V_\mathbb{R}$
in the unoriented bordism group
{\cite[Thm. 24.4]{Conner_Floyd_Differentiable}}.
This result allowed Milnor to show the existence of a surjective ring homomorphism $\mu: \Omega^U_{2*} \to \Omega^O_*$ 
from the unitary bordism ring to the unoriented bordism ring (reducing the dimension by a half), by taking the real points of the set of generators of the unitary bordism ring given by the Milnor hypersurfaces and the complex projective spaces \cite{Milnor-SW}. 
 Milnor deduced from this surjectivity  that an unoriented bordism class contains a complex manifold if and only if it contains a square $N \times N$ \cite[Thm. 1]{Milnor-SW}. Since Milnor was the first to study the properties of the homomorphism $\mu: \Omega^U_{2*} \to \Omega^O_*$, we decided to call this homomorphism the {\it Milnor map}.
 
Notably, the Milnor hypersurfaces that appear in the kernel of the Milnor map have the special property of possessing a free conjugation; namely, an involution that anticommutes with the almost complex structure of the complex manifold, and such that the action of the involution is fixed point free. Motivated by this, Stong organized this extra structure in the framework of bordism, defined the bordism group of unitary manifolds with free conjugations, and showed that the kernel of the Milnor map was precisely the image of the forgetful homomorphism of this bordism group into the unitary bordism group \cite[Lem. 3]{Stong_manifolds_with_conjugation}.

It is worth noting that the above results of Milnor and Stong, relied heavily on knowing an explicit set of generators for both the unitary bordism group and the unoriented bordism group. The Milnor hypersurfaces, along with  their real counterparts, constitute the set of generators having enough extra symmetries which allow us to completely understand the behavior of the Milnor map.

In this work we put forward the idea of generalizing the construction of the Milnor map to the equivariant setup and we present preliminary results in this direction.
We consider the torus group $T^k$ and its real subgroup $\mathbb{Z}_2^k$, along with the equivariant bordism groups $\Omega^{U,T^k}_{2*}$, $\Omega^{U,\mathbb{Z}_2^k}_{2*}$ and 
$\Omega^{O,\mathbb{Z}_2^k}_{*}$,
which are the unitary equivariant bordism groups of $T^k$
 and $\mathbb{Z}_2^k$, and the unoriented equivariant bordism group
of $\mathbb{Z}_2^k$, respectively.
We formulate our idea as a conjecture:
\begin{conjecture*}[Equivariant Milnor map conjecture]
Whenever $G$ is either $T$ or $\mathbb{Z}_2$,  there exists an equivariant Milnor map
\begin{equation}    
\mu^{G^k}: \Omega^{U,G^k}_{2*} \to 
\Omega^{O,\mathbb{Z}_2^k}_{*}
\end{equation}
reducing the dimension by a half, 
defined as the real points of smooth complex generators,
which is moreover surjective.
\end{conjecture*}

We present two results that support the veracity of the conjecture. 
First, we show that the conjecture holds for the Milnor maps $\mu^T$ in Thm. \ref{Milnor_map_semi-free} and  $\mu^{\mathbb{Z}_2}$ in Cor. \ref{Milnor_map_Z2} by constructing explicit complex projective generators for both the unitary bordism group of semi-free circle actions $\overline{\Omega}^{U,T}_{2*}$
and the unitary bordism group of $\mathbb{Z}_2$-actions.
Next, we show that the equivariant Milnor map $ \widehat{\mu}^{T^n} : \widehat{\Omega}_{2n}^{U, T^n} \rightarrow 
  \widehat{\Omega}_{n}^{O, \mathbb{Z}_2^n}$ exists and is surjective in Thm. \ref{Milnor_map_fully_effective} 
  once restricted to the equivariant bordisms of fully effective $T^n$- and $\mathbb{Z}_2^n$-actions, and further restricted to the manifolds of the appropriate dimension. Here we 
  employ  results from  the third author and collaborators in the study of unitary torus manifolds.

If the equivariant Milnor map $\mu^{\mathbb{Z}_2^k}$ exists, we expect the kernel to lie in the image of the $\mathbb{Z}_2^k$-equivariant bordism group of unitary manifolds with free conjugations. This bordism group is defined as the magnetic $\mathbb{Z}_2^k\times \overline{\mathbb{Z}}_2$-equivariant bordism group $\mathbf{\Omega}^{\mathbb{Z}_2^k \times \overline{\mathbb{Z}}_2}_*(E\overline{\mathbb{Z}}_2) $ associated to the classifying space $E\overline{\mathbb{Z}}_2$ for free $\overline{\mathbb{Z}}_2$-actions.
Accordingly, assuming the existence of such equivariant Milnor map $\mu^{\mathbb{Z}_2^k}$,
we expect the  sequence of homomorphisms
\begin{equation}    
\mathbf{\Omega}^{\mathbb{Z}_2^k \times \overline{\mathbb{Z}}_2}_{2*}(E\overline{\mathbb{Z}}_2) \stackrel{}{\longrightarrow} \Omega^{U,\mathbb{Z}_2^k}_{2*} \stackrel{\mu^{\mathbb{Z}_2^k}}{\longrightarrow}
\Omega^{O,\mathbb{Z}_2^k}_{*} \to 0,
\end{equation}
to be exact, where the left-hand morphism is given by forgetting the free involution structure.

We have confirmed the existence of this exact sequence for the cases for $k=0$ in Thm. \ref{theorem_sequences_Milnor_map} and $k=1$ in Thm.  \ref{theorem_sequence_Z2_Milnor_map}
by constructing  explicit equivariant unitary manifolds with free conjugations that generate the kernel of the Milnor map. We emphasize here that prior knowledge of the explicit manifold representatives of the kernel of the equivariant Milnor map is essential.

In order to obtain and present the stated results, we have organized this work into four sections. 

In the first section, we extend to the equivariant setting some fundamental bordisms relating equivariant manifolds with  involution to equivariant manifolds constructed out of the fixed points of an involution. We then present the Milnor hypersurfaces and we rework the proof due to Milnor, of the surjectivity of the Milnor map. 

In the second section, we present a set of generators for the unitary bordism group of semi-free circle actions, and similarly, for the unoriented $\mathbb{Z}_2$-equivariant bordism groups. As previously noted, knowing the explicit sets of generators allowed us to prove the surjectivity of the equivariant Milnor map for the circle and the cyclic group of order two.

In the third section, we provide a summary of results for quasitoric manifolds which, once suitably organized, enabled us to show the surjectivity of the equivariant Milnor map $ \widehat{\mu}^{T^n}$  when restricted to the unitary bordisms of fully effective actions of the torus $T^n$. Here the surjectivity is only proved for bordism groups of $T^n$-equivariant unitary manifolds of dimension $2n$.

In the fourth and last section we introduce the magnetic unitary bordism groups and we highlight  some of its properties. We then restrict the magnetic unitary bordism groups to those of free conjugations and construct a set of generators for this bordism group. We show that the kernel of the Milnor map is precisely the image of the forgetful map of the magnetic unitary bordism of free conjugations, and we construct a concrete set of manifolds realizing this kernel. We generalize this proof to the $\mathbb{Z}_2$-equivariant Milnor map, and similarly, we exhibit a concrete set of manifolds realizing this kernel.

\section{Equivariant bordisms and fixed points}
\subsection{Fundamental cobordisms}

Here we will present a series of bordisms that relate an equivariant manifold with the normal bundle of its fixed points.

We will be interested in two types of manifolds. Unitary manifolds endowed with a compatible action of the torus $T^k$, and unoriented manifolds endowed with an action of the group $\mathbb{Z}_2^k$.
Denote the corresponding bordism rings $\Omega^{U, T^k}_*$ and $\Omega^{O,\mathbb{Z}_2^k}_*$
respectively (see \cite{LuWang}
for the definitions).

The following cobordisms are fundamental:

\begin{lemma} \label{lemma_bordism_Tk}
    Let $M$ be a $T^k$-equivariant unitary manifold whose isotropy groups are all connected. Let $\widehat{T} \hookrightarrow  T^k$ be any injective group homomorphism where $\widehat{T}\cong S^1$, $F =M^{\widehat{T}}$ its fixed point set, and $E \to F$ its normal bundle.
Then there is a $T^k\times \widehat{T}$-equivariant cobordism between $S^1 \times M$ and the sphere bundle $S(E\oplus \mathbb{C})$. Here
$T^k$ acts on $M$, $\widehat{T}$ acts freely on $S^1$, and $\widehat{T}$ acts diagonally on $E \times \mathbb{C}$. Namely, $\widehat{T}$ 
acts on $E$ by the homomorphism $\widehat{T} \to T^k$ and on $\mathbb{C}$ by complex multiplication.
\end{lemma}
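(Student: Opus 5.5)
The plan is to use the standard ``blow out the fixed set'' construction on $M\times D^2$. Let $D^2\subset\mathbb{C}$ be the closed unit disk, and let $T^k\times\widehat{T}$ act on $W_0=M\times D^2$ as follows: $T^k$ acts on the $M$ factor only, and $\widehat{T}$ acts diagonally, through $\widehat{T}\hookrightarrow T^k$ on $M$ and by complex multiplication on $D^2$. These two actions commute because $T^k$ is abelian. The product stably complex structure on $W_0$ (that of $M$ plus the standard structure on $\mathbb{C}$) is invariant under this action.

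First I will identify the fixed set of $\widehat{T}$ on $W_0$. Suppose $(x,z)$ is fixed by all of $\widehat{T}$. Then $z=0$, since $\widehat{T}$ acts freely on $D^2\setminus\{0\}$, and $x\in M^{\widehat{T}}=F$. So the fixed set is $F\times\{0\}$, and its normal bundle is $E\oplus\mathbb{C}$. By the same argument, $\widehat{T}$ acts freely on the complement. The hypothesis that the isotropy groups are connected enters here. It guarantees that $F$ is a closed $T^k$-invariant submanifold whose normal bundle $E$ is a complex $T^k$-bundle on which $\widehat{T}$ acts with nonzero weights. It also ensures that no finite subgroup of $\widehat{T}$ has extra fixed points that would obstruct the identification of the normal data.

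Next I choose a $T^k\times\widehat{T}$-invariant metric and remove an open invariant tubular neighbourhood $D^\circ(E\oplus\mathbb{C})$ of $F\times\{0\}$. Call the result $W$. It is a compact $T^k\times\widehat{T}$-manifold with boundary
\[
\partial W = M\times S^1 \;\sqcup\; S(E\oplus\mathbb{C}),
\]
up to orientation conventions. The stably complex structure restricts to $W$, and on the sphere bundle it agrees with the structure induced as the boundary of the disk bundle of the complex bundle $E\oplus\mathbb{C}$.

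It remains to match the action on $M\times S^1$ with the one in the statement, where $\widehat{T}$ acts only on $S^1$. For this I use the untwisting diffeomorphism $\phi(x,z)=(z^{-1}\cdot x,\,z)$, where $z\in S^1\cong\widehat{T}$ acts on $M$ through the homomorphism. This map commutes with the $T^k$ action, and it intertwines the diagonal $\widehat{T}$ action with the action on the $S^1$ factor alone.

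I expect the main obstacle to be checking that $\phi$ respects the unitary structures up to equivariant homotopy. The approach I would try first is this. The differential of $\phi$ at $(x,z)$ is $d(z^{-1})$ on $TM$ plus a term $TS^1\to TM$ given by the infinitesimal generator. Each $z^{-1}$ preserves the invariant complex structure on $TM\oplus\underline{\mathbb{C}}^m$. The off-diagonal term can be deformed linearly to zero through isomorphisms, so it does not change the homotopy class of the stable complex structure. Once this is settled, $W$ is the required equivariant unitary cobordism.
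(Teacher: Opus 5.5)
Your proposal is correct and is essentially the paper's construction. The paper doubles $B(\mathbb{C})\times M$ into a closed manifold $Z$ and removes neighbourhoods of both fixed components; once the neighbourhood of the $M$-component is removed, what is left of the first copy is just a collar. So the paper's cobordism is your $W$ with a collar attached along the outer boundary by the same untwisting map $\phi(\lambda,m)=(\lambda^{-1},\lambda^{-1}m)$, and your linear-homotopy argument for $d\phi$ makes explicit a compatibility with the unitary structures that the paper leaves implicit.

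One correction: your side claims are false in general. These are that $\widehat{T}$ acts freely off $F\times\{0\}$, and that connected isotropy prevents finite subgroups of $\widehat{T}$ from having extra fixed points. For a counterexample, take $T^2$ acting on $\mathbb{C}P^2$ by $(s_1,s_2)\cdot[z_0:z_1:z_2]=[z_0:s_1z_1:s_2z_2]$, which has connected isotropy, with $\widehat{T}=\{(t^2,t)\}$; the point $[1:1:0]$ then has $\widehat{T}$-isotropy $\mathbb{Z}_2$. The lemma never uses freeness, though. Freeness only enters under the semi-free hypothesis of Lem.~\ref{lemma_hatT-semifree}, and connected isotropy plays no role in the argument at all.
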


\begin{proof}

    Take two copies of the unitary manifolds $B(\mathbb{C})\times M$, where $B(\mathbb{C})$ denotes the ball of radius one in the complex numbers, and endow them
    with the following $\widehat{T}$-actions.
For $t \in  \widehat{T} $  and $(\lambda, m) \in B(\mathbb{C}) \times M$ we define the action on the first copy to be: \begin{align}
    t \cdot ( \lambda, m) = (t^{-1} \lambda, m)
\end{align}
while on the second copy the action is:
\begin{align}
    t \cdot ( \lambda, m) = (t \lambda, tm).
\end{align}
Here we are making an abuse of notation. The element $t \in \widehat{T}$ acts on $\mathbb{C}$ according to a prescribed isomorphism $\iota: \widehat{T} \overset{\cong}{\to} S^1$. The actions should have read $t \cdot (\lambda,m) = (\iota(t^{-1}) \lambda, m)$ and $t \cdot (\lambda,m) = (\iota(t) \lambda, tm)$. To keep the notation simple, 
we will disregard the isomorphism $\iota$; it will be clear from the context.

On the boundaries of the two manifolds we define the isomorphism
\begin{align}
 \phi:   S(\mathbb{C}) \times M  &\to  S(\mathbb{C}) \times M \\
    (\lambda,m) & \mapsto (\lambda^{-1}, \lambda^{-1}m)
\end{align}
where $\lambda^{-1}m$ denotes the element $\iota^{-1}(\lambda^{-1})m$,
and $S(\mathbb{C})$ denotes the complex numbers of unit norm. This isomorphism $\phi$ is compatible with the $\widehat{T}$-action  since we have the equations:
\begin{align}
    \phi(t \cdot (\lambda,m))&= \phi( t^{-1}\lambda, m) =(t \lambda^{-1}, t \lambda^{-1}m)  \\
    t \cdot \phi(\lambda,m)& = t \cdot (\lambda^{-1}, \lambda^{-1}m) = (t \lambda^{-1}, t\lambda^{-1}m), 
\end{align}
and it is moreover compatible with the $T^k$-action which only affects the coordinates of $M$ and commutes with the $\widehat{T}$-action.

Glue the two manifolds along their boundary using the isomorphism $\phi$ to obtain the manifold:
\begin{align}
    Z =( B(\mathbb{C}) \times M) \ \underset{\phi}{\sqcup}\  ( B(\mathbb{C}) \times M)
\end{align}
which is $T^k \times \widehat{T}$-equivariant with unitary action. The $\widehat{T}$-fixed points are:
\begin{align}
    Z^{\widehat{T}}=M \sqcup F
\end{align}
with normal bundles isomorphic to
\begin{align}
    \mathbb{C} &\times M \to M & E \oplus \mathbb{C} \to F
\end{align}
where $E$ denotes the normal bundle of $F$ in $M$.

Observe that the fixed point set $F = \sqcup F_m$ might have different connected components $F_m$ of different dimensions. The normal bundle of each connected component $F_m$ may be denoted as $E_m$
and we will adopt the convention that the bundle $E \to F$ reads as the disjoint union of the bundles $E_m \to F_m$.

Removing tubular neighborhoods of the $\widehat{T}$-fixed points on the manifold $Z$, we obtain a $T^k \times \widehat{T}$-equivariant  cobordism from $S(\mathbb{C}) \times M$ to the sphere bundle 
$S(E \oplus \mathbb{C})$.
\end{proof}

In the case that the $\widehat{T}$-action were free on the whole cobordism, we would obtain a $T^k$-equivariant cobordism between $M$ and the quotient manifold:
\begin{align}
    S(E\oplus \mathbb{C})/\widehat{T}.
\end{align}
By unwinding the definition of this quotient space, we are led to the concept of a twisted projective space, which we describe in the following definition.

\begin{definition} \label{definition_twisted_projectivization} Let $E \to F$ be a complex $T^k \times \widehat{T}$-equivariant vector bundle where the fixed point of $\widehat{T} \cong S^1$ is the zero section isomorphic to $F$.
Split $E=E^+ \oplus E^{-} $ where  $\widehat{T}$ acts on $E^+$ with positive winding number and on $E^-$ with negative winding number.
 For $r \geq 0$ consider the action of $\widehat{T}$ on $E^+ \oplus E^- \oplus \mathbb{C}^r$ by the equation:
    \begin{align}
    s \cdot (v^+,v^-,z) = (sv^+,s^{-1}v^{-}, sz)
    \end{align}
    for $s \in \widehat{T}$ and $(v^+,v^-,z) \in E^+ \oplus E^- \oplus \mathbb{C}^r$. The projectivization of this bundle by the $\widehat{T}$-action will be denoted 
    \begin{align}
    \mathbb{P}^t_\mathbb{C}(E^+ \oplus E^- \oplus \mathbb{C}^r) \coloneqq S(E^+ \oplus E^- \oplus \mathbb{C}^r)/ \widehat{T}
    \end{align}
    and will be called {\it twisted projective space}. Note that it inherits the
    action of the group $T^k$ thus defining a $T^k$-equivariant space. 
\end{definition}

Unfortunately, this projective space $\mathbb{P}^t_{\mathbb{C}}(E^+ \oplus E^- \oplus \mathbb{C}^r)$ does not inherit the structure of a manifold unless the group $\widehat{T}$ acts freely on the sphere bundle  $S(E^+ \oplus E^-)$. This happens precisely when the action of $\widehat{T}$ is {\it semi-free}.

\begin{definition}
\label{def_semi_free_actions}
    A circle $S^1$ action on a manifold $M$ is {\it semi-free} if the only isotropy groups are either the full group itself or the trivial group.
\end{definition}

\begin{lemma} \label{lemma_hatT-semifree}
    Assume the same hypothesis of Lem. \ref{lemma_bordism_Tk} and require that the action of $\widehat{T}$ on $M$ is semi-free. There is a $T^k$-equivariant bordism from $M$ to the twisted projective space $\mathbb{P}_\mathbb{C}^t(E^+ \oplus E^- \oplus \mathbb{C})$.
\end{lemma}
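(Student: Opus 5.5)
We use the cobordism $W$ from the proof of Lem. \ref{lemma_bordism_Tk}: the manifold $Z$ with open $\widehat{T}$-invariant tubular neighborhoods of the fixed point set $Z^{\widehat{T}} = M \sqcup F$ removed. This $W$ is a $T^k \times \widehat{T}$-equivariant unitary cobordism from $S(\mathbb{C}) \times M$ to $S(E \oplus \mathbb{C})$. The plan is to show that $\widehat{T}$ acts freely on $W$. The quotient $W/\widehat{T}$ is then a smooth compact $T^k$-manifold, and we identify its two boundary pieces with $M$ and with $\mathbb{P}^t_\mathbb{C}(E^+ \oplus E^- \oplus \mathbb{C})$.

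\emph{Freeness.} On the first copy of $B(\mathbb{C}) \times M$ the action is $t\cdot(\lambda,m) = (t^{-1}\lambda, m)$. This is free wherever $\lambda \neq 0$, and its fixed locus $\{0\}\times M$ has been removed. On the second copy the action is $t\cdot(\lambda,m) = (t\lambda, tm)$. At a point with $\lambda \neq 0$ the stabilizer is trivial. At a point $(0,m)$ the stabilizer equals the $\widehat{T}$-isotropy of $m$ in $M$. By semi-freeness this is either trivial or all of $\widehat{T}$, and the latter happens exactly for $m \in F$, whose neighborhood has been removed. Hence $\widehat{T}$ acts freely on $W$, and this is precisely where the semi-free hypothesis enters.

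\emph{The quotient bordism.} Since $\widehat{T}$ acts freely and commutes with $T^k$, the space $W/\widehat{T}$ is a compact $T^k$-manifold with boundary
\begin{align}
\partial(W/\widehat{T}) = (S(\mathbb{C})\times M)/\widehat{T} \ \sqcup\ S(E\oplus\mathbb{C})/\widehat{T}.
\end{align}
On the first end, $\widehat{T}$ acts freely and transitively on the $S(\mathbb{C})$ factor and trivially on $M$. Hence $(S(\mathbb{C})\times M)/\widehat{T} \cong M$ $T^k$-equivariantly, via the slice $\lambda = 1$.

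On the other end, the normal bundle of $F$ in $Z$ is $E\oplus\mathbb{C}$. Here $\widehat{T}$ acts on $E$ through $\widehat{T}\to T^k$ and on $\mathbb{C}$ by the weight coming from the second-copy action $t\lambda$. Splitting $E = E^+\oplus E^-$ by the sign of the winding number, semi-freeness forces the weights on the normal bundle of the fixed set to be $\pm 1$. Indeed, a weight $w$ with $|w|>1$ would produce points with isotropy $\mathbb{Z}_{|w|}$. So the action is exactly $s\cdot(v^+,v^-,z) = (sv^+, s^{-1}v^-, sz)$ of Definition \ref{definition_twisted_projectivization} with $r=1$. Therefore $S(E\oplus\mathbb{C})/\widehat{T} = \mathbb{P}^t_\mathbb{C}(E^+\oplus E^-\oplus\mathbb{C})$.

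\emph{Unitary structure.} This is the step we expect to be the main obstacle. We must check that the quotient carries a $T^k$-invariant stable complex structure, and that the induced boundary structures agree, up to orientation and sign, with the given one on $M$ and with the natural one on the twisted projective space. The approach is to use the splitting $TW \cong \pi^*T(W/\widehat{T}) \oplus \underline{\mathbb{R}}$, where the trivial summand is spanned by the generating vector field of the free action. Adding a further trivial line and pairing it with the orbit direction, the $T^k\times\widehat{T}$-invariant stable complex structure on $W$ descends to $W/\widehat{T}$. Restricting to the ends, one checks this agrees with the structure on $M$ and with the standard structure on $\mathbb{P}^t_\mathbb{C}$ (tangent bundle plus a trivial line, obtained via the tautological line bundle). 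Because the first copy carries the inverse action, we should also track orientation signs to confirm that we obtain a bordism rather than merely a null-bordism of the disjoint union. Once this bookkeeping is done, $W/\widehat{T}$ is the desired $T^k$-equivariant unitary bordism.
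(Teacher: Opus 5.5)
Your proposal is correct and follows the paper's proof: the paper likewise takes the cobordism between $S(\mathbb{C})\times M$ and $S(E^+\oplus E^-\oplus\mathbb{C})$ from the proof of Lem.~\ref{lemma_bordism_Tk}, uses semi-freeness to force the winding numbers on $E$ to be $\pm 1$, notes that this makes the $\widehat{T}$-action on the cobordism free, and passes to the quotient by $\widehat{T}$. You make explicit what the paper leaves implicit: the stabilizer check on both copies of $B(\mathbb{C})\times M$, the identification $(S(\mathbb{C})\times M)/\widehat{T}\cong M$, and the descent of the stable complex structure to the free quotient.
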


\begin{proof}
    The normal bundle $E \to F$ of
a connected component of the fixed points can be split into a Whitney sum of vector bundles
\begin{align}
    E \cong E_{j_1 }\oplus \cdots \oplus E_{j_k}
\end{align}
where $\widehat{T}$ acts on $E_{j_l}$ with winding number $j_l$, that is $t \cdot e = t^{j_l}e$ for $e \in E_{j_l}$ and $t \in \widehat{T}$. Note that $j_l \neq 0$ and are all different.

The action of $\widehat{T}$ on $S(E)$
is free if and only if the only 
winding numbers that can appear are $\pm 1$. This is the local description of the semi-free condition. Therefore, we have 
$E \cong E^+ \oplus E^-$ where $E^+=E_1$ and $E^-=E_{-1}$ and the 
twisted projective space is defined by the action of $\widehat{T}$ on the sphere bundle $S(E^+ \oplus E^{-} \oplus \mathbb{C})$:
\begin{align}
    s\cdot (\nu^+,\nu^-,z) = (s\nu^+, s^{-1}\nu^-,sz).
\end{align}
Since the action is free, the
quotient
$\mathbb{P}^t_\mathbb{C}(E^+ \oplus E^- \oplus \mathbb{C})$
is a manifold. The action of $ t \in T^k$ in homogeneous coordinates is given by the following equation:
\begin{align}
    t \cdot [\nu^+:\nu^-:z] = [t\nu^+:t\nu^-:z].
\end{align}
The $T^k$-equivariant bordism between $M$ and $\mathbb{P}^t_\mathbb{C}(E^+ \oplus E^- \oplus \mathbb{C})$ follows from Lem. \ref{lemma_bordism_Tk} since
the cobordism described in the proof of the lemma between $M \times S^1$ and the sphere bundle $S(E^+ \oplus E^{-} \oplus \mathbb{C})$ has a free $\widehat{T}$-action.
\end{proof}

The unoriented version of Lem. \ref{lemma_bordism_Tk} is the following:
\begin{lemma} \label{lema_bordism_Z2k}
    Let $N$ be a manifold endowed with an action of $\mathbb{Z}_2^k$. Let $\tau \in  \mathbb{Z}_2^k$ be any non-trivial element and consider $E \to F$ the normal bundle of the fixed points $F=N^\tau$, then $N$ and $\mathbb{P}_\mathbb{R}(E \oplus \mathbb{R})$ are $\mathbb{Z}_2^k$-equivariantly cobordant.
\end{lemma}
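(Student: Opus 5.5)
The plan is to mimic the proofs of Lem. \ref{lemma_bordism_Tk} and Lem. \ref{lemma_hatT-semifree}, replacing the circle $\widehat{T}$ by the subgroup $\langle \tau\rangle \cong \mathbb{Z}_2$ and the unit disc $B(\mathbb{C})$ by the interval $[-1,1]=B(\mathbb{R})$. Write $\tau$ also for the generator of an auxiliary copy of $\mathbb{Z}_2$ acting on $\mathbb{R}$ by $\lambda\mapsto -\lambda$. Since $\mathbb{Z}_2^k$ is abelian, $\tau$ commutes with the whole group, so $F=N^\tau$ is $\mathbb{Z}_2^k$-invariant and $E\to F$ is a $\mathbb{Z}_2^k$-equivariant real vector bundle on which $\tau$ acts by $-1$. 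This last point is the real counterpart of the semi-free condition: the only nontrivial character of $\mathbb{Z}_2$ is the sign, so the extra freeness hypothesis of Lem. \ref{lemma_hatT-semifree} is automatic.

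First I construct the manifold $Z$. Take two copies of $B(\mathbb{R})\times N$, with the auxiliary $\mathbb{Z}_2$ acting by $\tau(\lambda,n)=(-\lambda,n)$ on the first copy and by $\tau(\lambda,n)=(-\lambda,\tau n)$ on the second. Glue the boundaries $S(\mathbb{R})\times N=\{\pm1\}\times N$ via
\[
\phi(\lambda,n)=(\lambda^{-1},\lambda^{-1}n)=(\lambda,\lambda n),
\]
where $\lambda=-1$ acts on $N$ by $\tau$. The same computation as in Lem. \ref{lemma_bordism_Tk} shows that $\phi$ intertwines the two actions, and $\phi$ commutes with $\mathbb{Z}_2^k$. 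The resulting $Z$ is a closed $(\mathbb{Z}_2^k\times\mathbb{Z}_2)$-manifold. In the first copy the fixed set of the auxiliary $\tau$ is $\{0\}\times N$, with normal bundle the trivial line bundle $\mathbb{R}$ with sign action. In the second copy the fixed set is $\{0\}\times F$, with normal bundle $E\oplus\mathbb{R}$ on which $\tau$ acts by $-1$. No fixed points occur on the boundary, since $\lambda=\pm1$ is moved to $-\lambda$.

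Next I remove equivariant tubular neighbourhoods of these fixed sets. This gives a $\mathbb{Z}_2^k\times\mathbb{Z}_2$-cobordism $W$ between $S(\mathbb{R})\times N$ and $S(E\oplus\mathbb{R})$, and the auxiliary $\mathbb{Z}_2$ acts freely on all of $W$. Passing to the quotient by the free $\mathbb{Z}_2$ gives a $\mathbb{Z}_2^k$-cobordism between $(\{\pm1\}\times N)/\mathbb{Z}_2\cong N$ and $S(E\oplus\mathbb{R})/\{\pm1\}=\mathbb{P}_\mathbb{R}(E\oplus\mathbb{R})$. On the latter, $\mathbb{Z}_2^k$ acts by $t[v:z]=[tv:z]$. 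Smoothness and orientation issues do not arise, since we are in unoriented bordism and the action is free.

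The main point to verify carefully is the equivariant compatibility of the gluing and the identification of the quotient. Specifically, one must check that the $\mathbb{Z}_2^k$-action descends to a well-defined action on the quotient, which follows because $\mathbb{Z}_2^k$ commutes with the auxiliary $\tau$. One must also check that the quotient of the end $\{\pm1\}\times N$ is indeed $N$ with its original action. The class $[\lambda,n]$ has the representative with $\lambda=1$, so the end is $N$; on the second copy the induced identification twists by $\tau$, but $\tau$ is an equivariant diffeomorphism of $N$, so the end is still $N$ with its original $\mathbb{Z}_2^k$-structure. This argument does not use the connected-isotropy hypothesis of Lem. \ref{lemma_bordism_Tk}.
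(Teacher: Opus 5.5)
Your proposal is correct and is essentially the paper's own proof. It uses the same two copies of $[-1,1]\times N$ with auxiliary involutions $(\lambda,n)\mapsto(-\lambda,n)$ and $(\lambda,n)\mapsto(-\lambda,\tau n)$, the same gluing $\phi(1,n)=(1,n)$, $\phi(-1,n)=(-1,\tau n)$, the same fixed set $N\sqcup F$ with normal bundles $\mathbb{R}$ and $E\oplus\mathbb{R}$, and the same quotient by the now-free $\mathbb{Z}_2$; your extra checks (equivariance of $\phi$, that $\tau$ acts by $-1$ on $E$ so freeness is automatic, and that the $\mathbb{Z}_2^k$-action descends because it commutes with $\tau$) only make explicit what the paper leaves implicit.
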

\begin{proof}
    Consider two copies of $[-1,1] \times N$. Endow them with an extra  $\mathbb{Z}_2$-structure, on the first copy we have 
    $(t,n) \mapsto (-t,n)$, while of the second we have $(t,n) \mapsto (-t,\tau (n))$.
Glue their boundaries with the map $\phi : \{-1,1\} \times N \to \{-1,1\} \times N$, defined by the equations $\phi(1,n)=(1,n)$, $\phi(-1,n)=(-1,\tau(n))$.

The manifold $W= ([-1,1] \times N)\  \underset{\phi}{\sqcup} \ ([-1,1] \times N)$ is endowed with an action of the group $\mathbb{Z}_2^k \times \mathbb{Z}_2$, the group $\mathbb{Z}_2^k$ acts on the coordinates of $N$, while the copy of $\mathbb{Z}_2$ on the right acts on $W$ as described above. 

The fixed points of this $\mathbb{Z}_2$-action on $W$ returns:
\begin{align}
    W^{\mathbb{Z}_2} = N \sqcup F 
\end{align}
    and their normal bundles are isomorphic to $N \times\mathbb{R} \to N$ and $E \times \mathbb{R}  \to F$ respectively. Removing tubular neighborhoods around the fixed point sets, and noting that the action of $\mathbb{Z}_2$ is free 
on this space, the quotient by $\mathbb{Z}_2$ is a cobordism between
$N$ and $S(E \oplus \mathbb{R})/\mathbb{Z}_2= \mathbb{P}_\mathbb{R}(E \oplus \mathbb{R})$. Since the cobordism
has a compatible action of the group $\mathbb{Z}_2^k$, the result follows.
\end{proof}

Now, we want to apply the previous lemma to the case on which we have an almost complex manifold $(M,J)$
endowed with an involution $\tau : M \to M$ which behaves like complex conjugation. Recall that an almost complex manifold $M$ is a smooth manifold endowed with and endomorphism $J :TM \to TM$ such that $J^2=-1$.

\begin{definition}
    An involution $\tau : M \to M$ on an almost complex manifold $(M,J)$ is called a {\it conjugation} if $\tau_* J = - J \tau_*$.
\end{definition}

\begin{proposition} \label{generalization_M=FxF}
    Let $(M,J)$ be an almost complex structure on the closed manifold $M$ and $\tau: M \to M$ a conjugation for this complex structure. Assume further, that the group $\mathbb{Z}_2^k$ acts on $M$ commuting with $\tau$ and compatible with $J$ (unitarily on $TM$), thus having an action of the group $\mathbb{Z}_2^k \times \mathbb{Z}_2$. Let $F=M^\tau$ be the fixed point set of the involution $\tau$ and endow the product $F \times F$ with the 
    the diagonal action of $\mathbb{Z}_2^k$ and the action of $\mathbb{Z}_2$ that flips the coordinates $(x,y) \mapsto (y,x)$. Then the manifolds $M$ and $F\times F$ are $\mathbb{Z}_2^k \times \mathbb{Z}_2$-equivariantly cobordant.
\end{proposition}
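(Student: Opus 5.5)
We will prove this as a two-step equivariant version of the classical Conner--Floyd argument: both $M$ and $F\times F$ will be shown cobordant to the same projective bundle over $F$. The geometric input is the conjugation condition. At a point $x\in F$, $\tau_*$ acts on $T_xM$ as a real-linear involution anticommuting with $J$. Hence $T_xM = T_xF \oplus J(T_xF)$: the $+1$-eigenspace is $T_xF$, and $J$ maps it isomorphically onto the $-1$-eigenspace. The normal bundle of $F$ in $M$ is therefore $E \cong J(TF)\cong TF$, with $\tau$ acting by $-1$. Because $\mathbb{Z}_2^k$ commutes with $\tau$ and with $J$, it preserves $F$, and the isomorphism $E\cong TF$ (induced by $J$) is $\mathbb{Z}_2^k$-equivariant. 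In particular $\dim F = \frac12\dim M$.

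\textbf{Step 1 ($M$ side).} Apply Lem.~\ref{lema_bordism_Z2k} to the group $\mathbb{Z}_2^k\times\mathbb{Z}_2$ acting on $M$, with the element $\tau$ (the generator of the extra $\mathbb{Z}_2$). This gives a $\mathbb{Z}_2^k\times\mathbb{Z}_2$-equivariant cobordism between $M$ and $\mathbb{P}_\mathbb{R}(E\oplus\mathbb{R})\cong\mathbb{P}_\mathbb{R}(TF\oplus\mathbb{R})$. Here $\tau$ acts on the bundle by $[v:s]\mapsto[-v:s]$, and $\mathbb{Z}_2^k$ acts through its action on $F$ and $TF$. We must check that the cobordism built in that proof is equivariant for the whole group, including $\tau$ itself. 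The gluing $\phi$ and the auxiliary involution commute with $\tau$ and with $\mathbb{Z}_2^k$, and the quotient is taken by the auxiliary $\mathbb{Z}_2$, so the residual $\mathbb{Z}_2^k\times\langle\tau\rangle$-action descends.

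\textbf{Step 2 ($F\times F$ side).} Apply Lem.~\ref{lema_bordism_Z2k} to $F\times F$ with the flip $\sigma(x,y)=(y,x)$ and the diagonal $\mathbb{Z}_2^k$-action. The fixed set is the diagonal $\Delta\cong F$, which is $\mathbb{Z}_2^k$-equivariantly diffeomorphic to $F$. Its normal bundle is the antidiagonal $\{(v,-v)\}\cong TF$, on which $\sigma$ acts by $-1$. Thus $F\times F$ is $\mathbb{Z}_2^k\times\mathbb{Z}_2$-equivariantly cobordant to $\mathbb{P}_\mathbb{R}(TF\oplus\mathbb{R})$, with the flip acting as $[v:s]\mapsto[-v:s]$ and $\mathbb{Z}_2^k$ acting via $F$. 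This is the same $\mathbb{Z}_2^k\times\mathbb{Z}_2$-manifold as in Step~1. Composing the two cobordisms gives the proposition.

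The main obstacle is the identification of normal bundles as equivariant bundles. We must confirm that $J$ gives an isomorphism of $\mathbb{Z}_2^k$-bundles $TF\to E$ intertwining the $\tau$-action ($-1$ on fibers) with the flip action on the antidiagonal. This requires that $\mathbb{Z}_2^k$ act complex-linearly on $TM$ and commute with $\tau$, both of which are hypotheses. A secondary point is to verify, in the proof of Lem.~\ref{lema_bordism_Z2k}, that the extra involution (here $\tau$ or $\sigma$) extends across the cobordism compatibly. I would handle this by letting the full group $\mathbb{Z}_2^k\times\langle\tau\rangle$ act on the $N$-coordinate of both copies of $[-1,1]\times N$ and checking that it commutes with the gluing map $\phi$. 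It does, since $\phi$ only involves $\tau$, which is central in this abelian group.
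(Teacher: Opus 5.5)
Your proposal is correct and follows essentially the same route as the paper: you identify the normal bundle of $F=M^\tau$ with $TF$ via $J$ and the normal bundle of the diagonal in $F\times F$ with $TF$, apply Lem.~\ref{lema_bordism_Z2k} to both sides to reach the same $\mathbb{Z}_2^k\times\mathbb{Z}_2$-manifold $\mathbb{P}_\mathbb{R}(TF\oplus\mathbb{R})$, and glue the two cobordisms. You also check that the residual $\mathbb{Z}_2^k\times\langle\tau\rangle$-action commutes with the gluing map and descends to the quotient, which the paper leaves implicit; it holds as you describe.
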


\begin{proof}
The compatibility of the $\mathbb{Z}_2^k$-action
means that for $g \in \mathbb{Z}_2^k$ we have $g \tau=\tau g$ and $g_* J = J g_*$. Therefore, the fixed point set $M^\tau=F=\sqcup_mF_m$ is endowed with an action of $\mathbb{Z}_2^k$, and its normal bundle $\sqcup_m E_m \to \sqcup F_m$ is a 
$\mathbb{Z}_2^k$-equivariant bundle. Note that
the normal bundle $E_m$ may be taken to be $J(TF_m) \subset TM$, and therefore the normal bundle $\sqcup_m E_m \to \sqcup F_m$ is 
$\mathbb{Z}_2^k$-equivariantly isomorphic to the bundle $TF \to F$.
The extra action of $\mathbb{Z}_2$ acts trivially on the base $F$ and by the sign representation on the fibers of $TF$.

We apply Lem. \ref{lema_bordism_Z2k} to the $\mathbb{Z}_2^k \times \mathbb{Z}_2$-action on $M$ where the involution $\tau$ generates the
second group $\mathbb{Z}_2$ and we obtain a $\mathbb{Z}_2^k \times \mathbb{Z}_2$-equivariant cobordism between $M$ and $\mathbb{P}_\mathbb{R}(TF \oplus \mathbb{R})$.

On the other hand, the diagonal action of $\mathbb{Z}_2^k$ on $F\times F$ makes it a $\mathbb{Z}_2^k$-space,
commuting with involution $(x,y)\mapsto (y,x)$.
The fixed point set of the involution is the diagonal $\Delta=\{(x,x) | x \in F \}$, which is $\mathbb{Z}_2^k$-equivariantly isomorphic to $F$. The normal bundle of the diagonal is also $\mathbb{Z}_2^k$-equivariantly isomorphic to $TF \to F$. 
The extra $\mathbb{Z}_2$-action acts trivially on the base, which is the diagonal, and by the sign representation on the fiber of $TF.$

We again apply Lem. \ref{lema_bordism_Z2k} to the $\mathbb{Z}_2^k \times \mathbb{Z}_2$-action on $F \times F$
and we obtain an $\mathbb{Z}_2^k \times \mathbb{Z}_2$-equivariant cobordism between $F \times F$ and $\mathbb{P}_\mathbb{R}(TF \oplus \mathbb{R})$.

Since for both  $M$ and $F \times F$, the normal bundles of
the fixed points of the involution
$\tau$ are $\mathbb{Z}_2^k\times \mathbb{Z}_2$-equivariantly isomorphic, we can glue the cobordisms along the projectivization
thus obtaining the desired $\mathbb{Z}_2^k \times \mathbb{Z}_2$-equivariant cobordism between $M$ and $F \times F$.
\end{proof}

\subsection{Milnor map}
The previous result of Prop. \ref{generalization_M=FxF} generalizes Conner and Floyd's result {\cite[Thm. 24.4]{Conner_Floyd_Differentiable}}
to the equivariant setup. Applied to the context of smooth complex algebraic varieties $V_\mathbb{C}$ endowed with a unitary action of $T^k$, we can say that the real points $V_\mathbb{R}$ are endowed with an action of $\mathbb{Z}_2^k \subset T^k$, and, if the real points $V_\mathbb{R}$ are smooth, the manifold $V_\mathbb{C}$ is cobordant to $V_\mathbb{R} \times V_\mathbb{R}$
in the $\mathbb{Z}_2^k \times \mathbb{Z}_2$-equivariant unoriented bordism ring. By forgetting the conjugation involution, we have that
$[V_\mathbb{C}]_2=[V_\mathbb{R} \times V_\mathbb{R}]_2$ in $\Omega^{O, \mathbb{Z}_2^k}_*$.

In the non-equivariant setting, a smooth complex algebraic variety $V_\mathbb{C}$ whose real points $V_\mathbb{R}$ are smooth, is cobordant to the product $V_\mathbb{R} \times V_\mathbb{R}$.
Motivated by this remarkable result, Milnor noted that the image of the forgetful homomorphism
\begin{align} 
  F:  \Omega^{U}_* \to  \Omega^{O}_*, \ [M] \mapsto [M]_2
\end{align}
from the unitary bordism ring $\Omega^{U}_*$ to the unoriented bordism ring $\Omega^{O}_*$, is precisely the subring
generated by all squares $[N \times N]_2 \in \Omega^{O}_{2*}$ \cite[Thm. 1]{Milnor-SW}.

Since a (redundant) set of generators of the unitary bordism ring may be taken to be the complex manifolds $\mathbb{C}P^n$ and the hypersurfaces:
\begin{align} \label{Milnor_hypersurfaces_definition}
    H_\mathbb{C}(m,n) : = \{\big([\mathbf{w}], [\mathbf{z}]\big) \in \mathbb{P}_\mathbb{C}^m \times \mathbb{P}_\mathbb{C}^n \ | \ w_0z_0 + \cdots +w_mz_m=0 \}
\end{align}
for $m \leq n$, and their real points $\mathbb{R}P^n$ and $H_\mathbb{R}(m,n)$ respectively (fixed point sets of the complex conjugation involutions) are also a (redundant) set of generators for the unoriented bordism ring, Milnor was able to show that there is a surjective ring homomorphism:
\begin{align}
\mu: \Omega_{2*}^U \to \Omega_{*}^O
\end{align}
defined by the real points of the complex manifolds that generate the unitary bordism ring:
\begin{align}
[\mathbb{P}_\mathbb{C}^m] \mapsto [\mathbb{P}_\mathbb{R}^m]_2, \ &\ [H_\mathbb{C}(m,n)] \mapsto [H_{\mathbb{R}}(m,n)]_2.
\end{align}

\begin{definition}
Call the surjective ring homomorphism $\mu:\Omega_{2*}^U \to \Omega_{*}^O$
the {\bf Milnor map}.
\end{definition}
Milnor further showed an explicit set of generators that realizes this map. Let us recall this proof.

\begin{proposition}[{\cite[\S 1]{Milnor-SW}}] The Milnor map modulo 2:
\begin{align}
 \mu{\otimes}\mathbb{Z}_2:   \Omega^{U}_{2*} {\otimes}\mathbb{Z}_2 \to \Omega^{O}_* 
\end{align}
is a split homomorphism of rings. This follows from the following results:
\begin{itemize}
\item The unoriented bordism ring $\Omega^{O}_* $ is a polynomial ring over $\mathbb{Z}_2$ with independent generators $\mathbb{P}_\mathbb{R}^{2r}$ and $H_\mathbb{R}(2^k,2t2^k)$ for $r,k,t \geq 1$.
\item The modulo 2 unitary bordism ring $\Omega^{U}_{2*} {\otimes}\mathbb{Z}_2$ is a polynomial ring over $\mathbb{Z}_2$ with independent generators $\mathbb{P}_\mathbb{C}^{2r}$, $H_\mathbb{C}(2^k,2t2^k)$ and $H_\mathbb{C}(2^{s-1},2^{s-1})$ for $r,k,t,s \geq 1$.
\end{itemize}    
\end{proposition}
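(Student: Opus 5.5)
The plan is to use the two polynomial-ring descriptions in the statement to write down an explicit section $s$ and check that it is a ring homomorphism with $(\mu\otimes\mathbb{Z}_2)\circ s=\mathrm{id}$. Throughout, I take the two structural facts as given from Thom, Milnor and Conner--Floyd, via characteristic numbers. Concretely, they say that $\Omega^O_*$ is polynomial on one generator $x_i$ in each degree $i\neq 2^j-1$, and that $\Omega^U_{2*}\otimes\mathbb{Z}_2$ is polynomial on one generator $y_i\in\Omega^U_{2i}$ for every $i\ge 1$. The first step is to match the listed representatives with these degrees.

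\begin{itemize}
\item The even degrees $i=2r$ are realized by $\mathbb{P}_\mathbb{R}^{2r}$, whose $s$-number is $2r+1\equiv 1 \pmod 2$.
\item The odd degrees $i\neq 2^j-1$ are realized by $H_\mathbb{R}(2^k,2t2^k)$. These have dimension $2^k(2t+1)-1$, and every odd $i$ not of the form $2^j-1$ is uniquely of this form with $k,t\ge1$.
\item On the complex side, degree $2r$ is realized by $\mathbb{P}_\mathbb{C}^{2r}$ and degree $2^k(2t+1)-1$ by $H_\mathbb{C}(2^k,2t2^k)$.
\item The remaining degrees $2^s-1$ are realized by $H_\mathbb{C}(2^{s-1},2^{s-1})$, whose complex dimension is $2^s-1$.
\end{itemize}

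In each case the indecomposability modulo the prime in question is the Milnor criterion on the $s$-number. I would cite this from Milnor rather than recompute it, noting only that $s[H(m,n)]=-\binom{m+n}{m}$ up to sign. Lucas' theorem then shows this is odd for $H(2^k,2t2^k)$, and equal to $\pm\binom{2^s}{2^{s-1}}$, which is $\pm 2$ times an odd number, for $H_\mathbb{C}(2^{s-1},2^{s-1})$. The latter is exactly the condition for a generator in degree $2^s-1$ of $\Omega^U$, where $p=2$ divides $i+1$.

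Next I define $s:\Omega^O_*\to\Omega^U_{2*}\otimes\mathbb{Z}_2$ on polynomial generators by
\begin{align}
[\mathbb{P}_\mathbb{R}^{2r}]_2\mapsto[\mathbb{P}_\mathbb{C}^{2r}]\otimes 1,\qquad [H_\mathbb{R}(2^k,2t2^k)]_2\mapsto[H_\mathbb{C}(2^k,2t2^k)]\otimes1 .
\end{align}
Because $\Omega^O_*$ is a free commutative $\mathbb{Z}_2$-algebra on these classes, $s$ extends uniquely to a ring homomorphism. By definition of the Milnor map, $\mu$ sends $[\mathbb{P}_\mathbb{C}^{2r}]$ and $[H_\mathbb{C}(2^k,2t2^k)]$ to their real points. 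Hence $(\mu\otimes\mathbb{Z}_2)\circ s$ is the identity on generators, and since both maps are ring homomorphisms it is the identity everywhere. This also shows that $\mu\otimes\mathbb{Z}_2$ is surjective.

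For a sharper statement I would also identify the kernel. The real points of $H_\mathbb{C}(2^{s-1},2^{s-1})$ form $H_\mathbb{R}(2^{s-1},2^{s-1})$, of dimension $2^s-1$. This class is zero in $\Omega^O_*$: it is decomposable, because its $s$-number $\binom{2^s}{2^{s-1}}$ is even, and its Stiefel--Whitney numbers vanish, for instance because it fibres over $\mathbb{R}P^{2^{s-1}}$ and bounds. It follows that $\ker(\mu\otimes\mathbb{Z}_2)$ is the ideal generated by the classes $H_\mathbb{C}(2^{s-1},2^{s-1})$, and we get
\begin{align}
\Omega^U_{2*}\otimes\mathbb{Z}_2\cong \Omega^O_*\otimes\mathbb{Z}_2[H_\mathbb{C}(2^{s-1},2^{s-1}) : s\ge1]
\end{align}
as rings.

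The main obstacle is that $\mu$ is only known to be well defined as a ring map once its values on the redundant generating set are consistent. The splitting argument depends on $\mu$ being a ring homomorphism sending each listed complex generator to its real points. I would get this from the Conner--Floyd result, which gives $[V_\mathbb{C}]_2=[V_\mathbb{R}\times V_\mathbb{R}]_2$, together with injectivity of squaring in $\Omega^O_*$: squaring is the Frobenius of a polynomial ring over $\mathbb{Z}_2$. This lets $\mu$ be defined intrinsically by $\mu(x)^2=F(x)$, which makes it automatically a ring homomorphism with the stated values, so everything above is justified.
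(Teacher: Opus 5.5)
Your proposal is correct and follows essentially the same route as the paper. You identify the listed varieties as polynomial generators via the $s$-number criteria (Thom on the real side, the mod~2 Milnor--Novikov criterion on the complex side) and define the splitting on the free generators of $\Omega^O_*$; your observation that $F=\mu^2$, combined with injectivity of Frobenius, neatly supplies the well-definedness of $\mu$ as a ring map, which the paper simply attributes to Milnor.

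The only loose point is in your optional kernel remark. Fibring over $\mathbb{R}P^{2^{s-1}}$ is not by itself a reason for $H_\mathbb{R}(2^{s-1},2^{s-1})$ to bound, since the $H_\mathbb{R}(2^k,2t2^k)$ are also projective bundles over real projective spaces yet are generators. The right reason is that $H_\mathbb{R}(m,m)\cong\mathbb{P}(T\mathbb{R}P^m)$, which bounds. Equivalently, applying Conner--Floyd to both $\mathbb{K}$ and the free conjugation $\mathbb{K}_{\mathrm{swap}}$ gives $[H_\mathbb{R}(m,m)]^2=[H_\mathbb{C}(m,m)]_2=0$. None of this is needed for the splitting.
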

\begin{proof}
    Recall that the characteristic numbers of the Milnor hypersurfaces are given by:
    \begin{equation}
    s_{m+n-1}\left(H_\mathbb{C}(m,n) \right) = 
        \begin{cases}
            2 & m = n = 1 \\
            0 & m = 1, n > 1 \\
            - \binom{m+n}{m} & m > 1.
         \end{cases}
     \end{equation}
This number modulo 2 is the same for the real Milnor hypersurfaces $H_\mathbb{R}(m,n)$. Now, Thom \cite{Thom} showed that the even-dimensional generators of the unoriented bordism ring are the projective spaces $\mathbb{P}_\mathbb{R}^{2r}$, while for the odd-dimensional generators it is enough to consider manifolds of dimension different from $2^j -1$ whose characteristic numbers are odd. Any odd number that is not of the form $2^j -1$
is of the form $2^k(2t+1)-1$ for some $k,t \geq 1$. The Milnor hypersurface
$H_{\mathbb{R}}(2^k,2t2^k)$ has dimension $2^k(2t+1)-1$ and a simple calculation shows that its characteristic number is odd. Therefore, the Milnor hypersurfaces
$H_{\mathbb{R}}(2^k,2t2^k)$ together with the real projective spaces $\mathbb{P}_\mathbb{R}^{2r}$ form an independent set of generators for the unoriented bordism ring.

For the unitary case, we can take the even-dimensional complex projective spaces $\mathbb{P}_\mathbb{C}^{2r}$   
as generators since their characteristic numbers are odd. For  manifolds with odd complex dimension we can take  the Milnor hypersurfaces
$H_{\mathbb{C}}(2^k,2t2^k)$ together with the hypersurfaces $H_{\mathbb{C}}(2^{s-1},2^{s-1})$. The former ones have odd characteristic number as shown above, while the characteristic number of the latter ones is twice an odd number. Recall that
a unitary manifold  of dimension $2^{s}-1$ is an algebra generator of the unitary bordism ring if its characteristic number is $2$ \cite{Novikov}. Therefore $H_{\mathbb{C}}(2^{s-1},2^{s-1})$ is a generator of complex dimension $2^{s}-1$ of the modulo 2 unitary bordism ring.

We therefore have that the kernel of the Milnor map modulo 2 is the ideal generated by the manifolds $H_{\mathbb{C}}(2^{s-1},2^{s-1})$. Moreover, since the ideal $2\Omega^{U}_{2*}$ is mapped to zero by the Milnor map we get that the kernel of the Milnor map is the ideal generated by the manifolds $H_{\mathbb{C}}(2^s,2^s)$ and the ideal $2\Omega^{U}_{2*}$.

\begin{lemma} \label{lemma_kernel_mu}The kernel of the Milnor map is the following ideal:
\begin{align} \label{kernel mu}
   \mathrm{ker}(\mu : \Omega^{U}_{2*} \to \Omega^{O}_*)  = 2\Omega^{U}_{2*} +\Omega^{U}_{2*}\langle [H_{\mathbb{C}}(2^{s-1},2^{s-1})] \colon s \geq 1 \rangle 
\end{align}
\end{lemma}

The assignment 
\begin{align} 
\Omega^{O}_* & \to \Omega^{U}_{2*} \otimes \mathbb{Z}_2\\
H_{\mathbb{R}}(2^k,2t2^k) & \mapsto H_{\mathbb{C}}(2^k,2t2^k) \\
\mathbb{P}_\mathbb{R}^{2r} & \mapsto
\mathbb{P}_\mathbb{C}^{2r} 
\end{align}
induces a ring homomorphism since both rings are polynomial rings in independent generators.  The proposition follows.
\end{proof}

Note that the forgetful homomorphism becomes the square of the Milnor map $\mu$, thus producing the relation:
\begin{align}
[V_\mathbb{C}]_2=[V_\mathbb{R} \times V_\mathbb{R}]_2 \in \Omega^{O}_{2*}.
\end{align}

An equivariant version of this homomorphism is one of the aims of this work. To define this map, we need to generalize to the equivariant setup certain properties used in the definition of the Milnor map. In particular we have:
\begin{itemize}
    \item[(a)] Both bordism rings $\Omega^U_*$ and $\Omega^O_*$ are free polynomial algebras. Namely, $\mathbb{Z}[y_{2j} | j \in \mathbb{N}^{>0}]$ and $\mathbb{Z}_2[x_k | k \in \mathbb{N}^{>0},  k \neq 2^s-1]$ respectively.
    \item[(b)] The generators of the unitary bordism ring may be taken to be complex manifolds. Then the Milnor map is simply defined as the real points of those manifolds.
    \item[(c)] The real points of the complex manifolds that generate the unitary bordism group also generate the unoriented bordism ring.
   
\end{itemize}
These three properties have to be accommodated to the equivariant setup. 
\begin{remark}
In the quasitoric context, Davis and Januszkiewicz  \cite{Davis-Januszkiewicz}  show that each quasitoric manifold $M^{2n}$ over a simple convex polytope $P^n$ always admits a natural conjugation involution $\tau$ whose fixed point set $M^\tau$ is just a small cover over $P^n$. In particular,  this conjugation involution   $\tau$ is independent of the choices of omniorientations on $M^{2n}$, and by  \cite[Cor. 6.7 \& 6.8]{Davis-Januszkiewicz}, one has that the mod 2 reductions of all Chern numbers of $M^{2n}$ with an omniorientation determine all Stiefel--Whitney numbers of $M^\tau$, and in particular,  $[M^{2n}]_2=[M^\tau\times M^\tau]_2$ as unoriented bordism classes in $\Omega_*^O$.
Thus, $\tau$ induces a homomorphism $ \Omega^U_{2n}\to \Omega_n^O$, which exactly agrees with the above homomorphism $\mu: \Omega^U_{2n}\to \Omega_n^O$.
\end{remark}

\section{Semi-free actions of the circle and unitary involutions}

Consider the equivariant geometric bordism groups $\Omega^{U,T}_*$ and $\Omega_*^{O,\mathbb{Z}_2}$ of equivariant unitary manifolds with circle actions  and equivariant unoriented manifolds with involutions \cite{LuWang}. One can define a homomorphism $\Omega^{U,T}_{2*} \to \Omega_*^{O,\mathbb{Z}_2}$,
using the explicit set of generators for the module $\Omega^{U,T}_{2*}$ presented in    \cite{Hattori_circle, Kosniowski-circle}
that satisfy the properties listed above. 

Instead, we will show that 
it is enough to consider
 a small class of $T$-equivariant manifolds in order to show the surjectivity to  $\Omega_*^{O,\mathbb{Z}_2}$.
 The manifolds of interest are
$T$-equivariant unitary manifolds where the action is semi-free (see Def. \ref{def_semi_free_actions}).

\subsection{Unitary bordism of semi-free circle actions}

\begin{definition}
    Let $\overline{\Omega}^{U,T}_*$ be the unitary bordism ring of manifolds with semi-free circle actions.
\end{definition}
This bordism ring has been extensively studied and it has served as one first candidate to completely determine the ring structure of the equivariant bordism ring \cite{Sinha-semi-free}.

We claim that the generalization of the Milnor map restricted to semi-free actions
\begin{align}
    \mu : \overline{\Omega}^{U,T}_{2*} \to {\Omega}^{O,\mathbb{Z}_2}_*
\end{align}
is surjective. Let us elaborate on how this map is constructed.

First note that both bordism rings $\overline{\Omega}^{U,T}_{2*} $ and ${\Omega}^{O,\mathbb{Z}_2}_*$ are free modules over the rings $\Omega^U_*$ and $\Omega^O_*$ respectively \cite{Loeffler_unitary, Buchstaber-Panov-Ray, LuWang}. This takes care of condition (a) since we will define the equivariant Milnor map using the generators as modules  
rather than rings.

To determine a set of generators for both bordism groups $\overline{\Omega}^{U,T}_{2*}$ and ${\Omega}^{O,\mathbb{Z}_2}_*$ we will make use of Conner--Floyd's long exact sequence for pairs of families of subgroups \cite{Stong_complex_and_oriented}. For both $T$ and $\mathbb{Z}_2$ we let $\mathcal{A}$ denote the family of all subgroups and $\{\{1\}\}$ the family with only the trivial subgroup, and take the associated long exact sequences:
\begin{align} \label{short-exact-sequence-relative-semi-free}
  0  \to \overline{\Omega}^{U,T}_{2*} \to \overline{\Omega}^{U,T}_{2*}\{\mathcal{A}, \{1\}\} \overset{\partial}{\to}  \overline{\Omega}^{U,T}_{2*-1}\{\{1\}\} \to 0\\
  0  \to {\Omega}^{O,\mathbb{Z}_2}_{*} \to {\Omega}^{O,\mathbb{Z}_2}_{*}\{\mathcal{A}, \{1\}\} \overset{\partial}{\to}  {\Omega}^{O,\mathbb{Z}_2}_{*-1}\{\{1\}\} \to 0
\end{align}
which in both cases split as short exact sequences. This is because we can find sections for the boundary maps by the following argument: A manifold $[E]$ in  $\overline{\Omega}^{U,T}_{2*-1}\{\{1\}\}$
has a free action of $T$ and therefore it is a principal $T$-bundle. Take the associated
complex line bundle $E \times_T \mathbb{C}$ and note that the ball bundle $B(E \times_T \mathbb{C})$ of vectors of norm less than or equal to $1$ defines a bordism class in 
$\overline{\Omega}^{U,T}_{2*}\{\mathcal{A}, \{1\}\}$. Thus, we have the sections:
\begin{align}
   \overline{\Omega}^{U,T}_{2*-1}\{ \{1\}\} & \to \overline{\Omega}^{U,T}_{2*}\{\mathcal{A}, \{1\}\} & {\Omega}^{O,\mathbb{Z}_2}_{*-1}\{ \{1\}\} & \to {\Omega}^{O,\mathbb{Z}_2}_{*}\{\mathcal{A}, \{1\}\} \\
   [E]& \mapsto [B(E \times_T \mathbb{C)]} & [H] & \mapsto [B(H \times_{\mathbb{Z}_2} \mathbb{R})]
\end{align}
where in both cases $\partial B(E \times_T \mathbb{C)} \cong E$ and $\partial  B(H \times_{\mathbb{Z}_2} \mathbb{R}) \cong H$.

Now we will construct projection maps 
\begin{align}
    \overline{\Omega}^{U,T}_{2*}\{\mathcal{A}, \{1\}\} &\to \overline{\Omega}^{U,T}_{2*} &
      {\Omega}^{O,\mathbb{Z}_2}_{*}\{\mathcal{A}, \{1\}\} &\to {\Omega}^{O,\mathbb{Z}_2}_{*}
\end{align}
using the fact that the relative bordism groups of the pair of families $\{\mathcal{A}, \{1\}\}$ can be written in terms of non-equivariant bordism groups.

Take a manifold with boundary $(N, \partial N)$ defining a class in $\overline{\Omega}^{U,T}_{2*}\{\mathcal{A}, \{1\}\}$; this is a unitary manifold $N$ with a compatible action of $T$ such that $T$ acts freely on $\partial N$. Let $F=N^T$ be the fixed point set of the action, and write it as a union of connected components $F = \cup_m F_m$ where $n_m = \mathrm{dim}_\mathbb{R}(F_m)$. Clearly, each connected component $F_m$ is a closed submanifold embedded in the interior of $N$ since $\partial F=F\cap \partial N$ is empty. Denote $E_m \to F_m$ the normal bundles of $F_m$ in $N$ and note that the circle action splits each $E_m$ into the Whitney sum  $E_m^+\oplus E_m^-$ where $T$ acts on $E_m^+$ by multiplication and on $E_m^-$ by multiplication of the inverse. Denote the rank of these vector bundles by $n_m^\pm\coloneqq \mathrm{rank}_\mathbb{C}E_m^\pm$.

There is a cobordism
\begin{align}
    (N,\partial N) \sim \bigcup_m \big(B(E_m^+ \oplus E_m^-),S(E_m^+ \oplus E_m^-)\big)
\end{align}
which allows us to make the identification
\begin{align}
    \overline{\Omega}^{U,T}_{2n}\{\mathcal{A}, \{1\}\} \cong &\bigoplus_{k+k^++k^-=n} \Omega^{U}_{2k}(BU(k^+)\times BU(k^-)) \\
    [(N,\partial N)] \mapsto & \sum_{m} [F_m \to BU(n_m^+)\times BU(n_m^-)]
\end{align}
where the maps $F_m \to BU(n_m^+)\times BU(n_m^-)$ classify the vector bundles $E_m^+ \oplus E_m^-$.

We define the homomorphism
\begin{align}
    \Omega^{U}_{2k}(BU(k^+)\times BU(k^-)) &\to \overline{\Omega}^{U,T}_{2n} \\
    [F \to BU(k^+) \times BU(k^-)]  &\mapsto [\mathbb{P}^t_\mathbb{C}(E^+ \oplus E^- \oplus \mathbb{C})]
\end{align}
where $E^+ \oplus E^-$ is the vector bundle that the map $F \to BU(k^*) \times BU(k^-)$ classifies, and $\mathbb{P}^t_\mathbb{C}(E^+ \oplus E^- \oplus \mathbb{C})$ is the twisted projective space of Def. \ref{definition_twisted_projectivization} and appearing in Lem. \ref{lemma_hatT-semifree}.

The twisted projective space $\mathbb{P}^t_\mathbb{C}(E^+ \oplus E^- \oplus \mathbb{C})$  inherits a $T$-action which is given by the following equation in homogeneous coordinates:
\begin{align}
    t \cdot [v^+:v^-:z] = [tv^+:t^{-1}v^-:z].
\end{align}

The $T$-action on the twisted projective space is semi-free and its $T$-fixed points are:
\begin{align}
    \mathbb{P}^t_\mathbb{C}(E^+ \oplus E^- \oplus \mathbb{C})^T = F \sqcup \mathbb{P}^t_\mathbb{C}(E^+ \oplus E^- ),
\end{align}
with normal bundles isomorphic to the following bundles:
\begin{align}
    E^+ \oplus E^- &\to F & \mathbb{C} \times \mathbb{P}^t_\mathbb{C}(E^+ \oplus E^- ) & \to \mathbb{P}^t_\mathbb{C}(E^+ \oplus E^- ).
\end{align}
The $T$-action on the first bundle matches the original $T$-action on $E^+ \oplus E^-$, while on the second the $T$-action is given by the following equation:
\begin{align}
    t \cdot (z, [v^+:v^-]) = (t^{-1}z,[v^+:v^-]),
\end{align}
where $t \in T$, $z \in \mathbb{C}$ and $[v^+:v^-] \in \mathbb{P}^t_\mathbb{C}(E^+ \oplus E^- )$. We obtain the following result which is essentially in \cite{Hattori_circle}.

\begin{lemma} \label{lemma_projective_generatosrs_semi-free}
    Let $M$ be a unitary manifold admitting a semi-free $T$-action. Let $F = \cup_m F_m$ be the fixed point set of the action and let $E_m^+ \oplus E_m^-$ be their normal bundles as defined above. Then the following holds in $\overline{\Omega}^{U,T}_{2n}$
    \begin{align}
        [M] = \sum_m [\mathbb{P}^t_\mathbb{C}(E_m^+ \oplus E_m^- \oplus \mathbb{C})].
    \end{align}
    
\end{lemma}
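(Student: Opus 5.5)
The plan is to deduce the identity from Lem. \ref{lemma_hatT-semifree}, using the short exact sequence \eqref{short-exact-sequence-relative-semi-free} to keep track of the fixed point data. The key point is that the $T$-equivariant bordism class of a semi-free unitary manifold is determined by its image in the relative group $\overline{\Omega}^{U,T}_{2n}\{\mathcal{A},\{1\}\}$, because the map $\overline{\Omega}^{U,T}_{2*}\to\overline{\Omega}^{U,T}_{2*}\{\mathcal{A},\{1\}\}$ is injective. Under the identification with $\bigoplus \Omega^U_{2k}(BU(k^+)\times BU(k^-))$, this image is the class of the fixed point set together with its normal bundle, split as $E^+\oplus E^-$.

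First I would compute the image of $[M]$. Since $\partial M=\emptyset$, the class $[(M,\emptyset)]$ in the relative group is cobordant to $\bigcup_m (B(E_m^+\oplus E_m^-),S(E_m^+\oplus E_m^-))$. Concretely, one removes open tubular neighbourhoods of the $F_m$; the complement is a manifold on which $T$ acts freely, and it provides the relative bordism. Hence $[M]\mapsto \sum_m [F_m\to BU(n_m^+)\times BU(n_m^-)]$.

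Next I would compute the image of each $[\mathbb{P}^t_\mathbb{C}(E_m^+\oplus E_m^-\oplus\mathbb{C})]$. By the description preceding the lemma, the fixed point set of this twisted projective space is $F_m\sqcup \mathbb{P}^t_\mathbb{C}(E_m^+\oplus E_m^-)$. The normal bundle of $F_m$ is the original $E_m^+\oplus E_m^-$. The normal bundle of $P_m:=\mathbb{P}^t_\mathbb{C}(E_m^+\oplus E_m^-)$ is the trivial line $\mathbb{C}$ with winding $-1$, so it has type $(0,1)$. The image is therefore $[F_m,E_m^+\oplus E_m^-]+[P_m\to BU(0)\times BU(1)]$, where the second map is constant.

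It then remains to show that the extra terms $[P_m\to BU(0)\times BU(1)]=[P_m]\otimes[\mathrm{pt}]$ sum to zero. This is the main obstacle, since $P_m$ need not be null-bordant as a unitary manifold. I would handle it in one of two ways.

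\begin{itemize}
\item \emph{Directly.} Note that $P_m=S(E_m^+\oplus E_m^-)/T$ bounds the unitary manifold $B(E_m^+\oplus E_m^-)$ with the origin removed, taken modulo $T$. More cleanly, $P_m$ is the boundary of $(M\setminus\nu F)/T$, summed over $m$. Since $T$ acts freely on $M\setminus\nu F$, the quotient $(M\setminus\nu F)/T$ is a unitary manifold whose boundary is $\bigsqcup_m P_m$. This gives $\sum_m[P_m]=0$ in $\Omega^U_*$, hence $\sum_m [P_m\to BU(0)\times BU(1)]=0$.
\item \emph{Alternatively.} Invoke Lem. \ref{lemma_hatT-semifree} with $k=0$ and $\widehat T=T$ to obtain a bordism from $M$ to $\bigsqcup_m\mathbb{P}^t_\mathbb{C}(E_m^+\oplus E_m^-\oplus\mathbb{C})$. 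However, that bordism is only non-equivariant (the $\widehat T$ has been quotiented out), so the first route is the one I would use.
\end{itemize}

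One care point in the first route: the unitary structure on the quotient, and the compatibility of its boundary orientation with the stated $T$-action (winding $-1$ on the normal line), must be checked. I expect the signs to match because the stable tangent bundle of the quotient, plus the line bundle of the action, agrees with that of $M\setminus \nu F$.

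With both images equal, injectivity of $\overline{\Omega}^{U,T}_{2n}\to\overline{\Omega}^{U,T}_{2n}\{\mathcal{A},\{1\}\}$ yields $[M]=\sum_m[\mathbb{P}^t_\mathbb{C}(E_m^+\oplus E_m^-\oplus\mathbb{C})]$.
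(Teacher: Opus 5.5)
Your route differs from the paper's and can be completed, but one step fails as written. The normal bundle of $P_m=\mathbb{P}^t_\mathbb{C}(E_m^+\oplus E_m^-)$ inside $\mathbb{P}^t_\mathbb{C}(E_m^+\oplus E_m^-\oplus\mathbb{C})$ is not a product. The description before the lemma is only right about the $T$-weight $-1$. The bundle is the associated line bundle $N_m=S(E_m^+\oplus E_m^-)\times_{\widehat T}\mathbb{C}$ given by the weight-one coordinate $z$. For example, if $F_m$ is a point and $E_m=E_m^+=\mathbb{C}^a$ with $a\geq 2$, the twisted projective space is $\mathbb{P}^a_\mathbb{C}$, $P_m$ is a hyperplane, and $N_m=\mathcal{O}(1)$.

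Consequently:
\begin{itemize}
\item the classifying map $P_m\to BU(0)\times BU(1)$ is not constant;
\item the extra term is $[P_m,N_m]\in\Omega^U_*(BU(1))$, not $[P_m]\otimes[\mathrm{pt}]$;
\item the implication from $\sum_m[P_m]=0$ in $\Omega^U_*$ to the vanishing you need does not hold.
\end{itemize}
The fix is already inside your construction. The quotient $Y=(M\setminus\nu F)/T$ carries the line bundle $\Lambda=(M\setminus\nu F)\times_T\mathbb{C}$. Since $T$ acts on $S(E_m)$ by $(v^+,v^-)\mapsto(tv^+,t^{-1}v^-)$, exactly as $\widehat T$ does in Def.~\ref{definition_twisted_projectivization}, you get $\Lambda|_{P_m}=N_m$. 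So $(Y,\Lambda)$ null-bords $\sum_m[P_m,N_m]$ in $\Omega^U_*(BU(1))$, up to the uniform orientation convention you already flagged.

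Your reason for discarding Lem.~\ref{lemma_hatT-semifree} is a misreading, and that lemma is the paper's entire proof. It must be applied with $k=1$ and $\widehat T\hookrightarrow T$ the identity; $k=0$ is impossible, since there is no injective $\widehat T\to T^0$. In the doubled manifold $Z$ of Lem.~\ref{lemma_bordism_Tk}, the original $T$ acts only on the $M$-factor and commutes with the auxiliary $\widehat T$. It therefore survives the quotient by $\widehat T$. This gives an explicit $T$-equivariant unitary bordism from $M$ to $\bigsqcup_m\mathbb{P}^t_\mathbb{C}(E_m^+\oplus E_m^-\oplus\mathbb{C})$ with action $[tv^+:t^{-1}v^-:z]$. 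The splitting of \eqref{short-exact-sequence-relative-semi-free} is then read off as a consequence.

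That route needs neither the injectivity of $\overline{\Omega}^{U,T}_{2*}\to\overline{\Omega}^{U,T}_{2*}\{\mathcal{A},\{1\}\}$ nor any information about the fixed component at $z=0$. Yours is a Conner--Floyd fixed-point-data argument instead. The injectivity it uses is legitimately available from the disk-bundle section of $\partial$, so there is no circularity. The cost is that you must account for $P_m$, which is exactly where the normal-bundle issue arises.
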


\begin{proof}
This follows from Lem. \ref{lemma_hatT-semifree} and gives the desired splitting of the relative bordism groups
\begin{align}
\overline{\Omega}^{U,T}_{2*}\{\mathcal{A}, \{1\}\} \ {\cong}  \ \overline{\Omega}^{U,T}_{2*} \oplus   {\Omega}^{U}_{2*-2}(BU(1))
\end{align}
of the short exact sequence
of Eqn. \ref{short-exact-sequence-relative-semi-free}.
\end{proof}

Lemma \ref{lemma_projective_generatosrs_semi-free} allows us to generate elements of the semi-free bordism group with the twisted projective spaces. Therefore, we now need to find a good set of generators for the non-equivariant bordism groups 
 $\Omega^{U}_{2k}(BU(k^+)\times BU(k^-))$. Fortunately this has already been addressed by Conner and Floyd and presented in \cite[pp. 148]{Kochman_birdism}.

The canonical map $BU(1)^n \to BU(n)$ induces a map in bordism $\Omega_*^U(BU(1)^n) \to \Omega_*^U(BU(n))$.
From this map the bordism group $\Omega_*^U(BU(n))$ is a free $\Omega^{U}_*$-module with basis:
\begin{align}
    \{ \alpha_{s_1s_2 \dots s_n} | 0\leq s_1 \leq s_2 \leq \cdots \leq s_n \}
\end{align}
where $ \alpha_{s_1s_2 \dots s_n}$ represents the classifying map of the product of the tautological line bundles
\begin{align} \label{definition_bundle_over_CP}
    \xymatrix{
\gamma_{s_1} \times \cdots \times \gamma_{s_n} \ar[d]& \\
\mathbb{P}_\mathbb{C}^{s_1} \times \cdots \times \mathbb{P}_\mathbb{C}^{s_n} \ar[r] & BU(n).
    }
\end{align}

Therefore $\Omega^{U}_{2k}(BU(k^+)\times BU(k^-))$ is a free $\Omega^{U}_*$-module with basis the classifying maps of the bundles:
\begin{align}
    \xymatrix{
\gamma_{s_1}^+ \times \cdots \times \gamma_{s_{k^+}}^+ \times \gamma_{r_1}^- \times \cdots \times \gamma_{r_{k^-}}^-  \ar[d]& \\
\mathbb{P}_\mathbb{C}^{s_1} \times \cdots \times \mathbb{P}_\mathbb{C}^{s_{k^+}} \times \mathbb{P}_\mathbb{C}^{r_1} \times \cdots \times \mathbb{P}_\mathbb{C}^{r_{k^-}}\ar[r] & BU(k^+) \times BU(k^-).
    }
\end{align}

Therefore we obtain the following result:

\begin{proposition} \label{basis_semi-free_actions}
    Any element in $\overline{\Omega}^{U,T}_{2*}$ can be written as a finite sum of products 
    \begin{align}
        [M_0] \times [\mathbb{P}^t_\mathbb{C}(\gamma_{s_1}^+ \times \cdots \times \gamma_{s_{k^+}}^+ \times \gamma_{r_1}^- \times \cdots \times \gamma_{r_{k^-}}^- \oplus \mathbb{C})]
    \end{align}
    where $[M_0] \in \Omega^{U}_{2*}$ is a unitary manifold with trivial $T$-action, and the twisted projective spaces are defined as above.
\end{proposition}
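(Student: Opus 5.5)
The plan is to combine Lem. \ref{lemma_projective_generatosrs_semi-free} with the Conner--Floyd description of $\Omega^U_*(BU(k^+)\times BU(k^-))$ recalled above.

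Take $[M]\in\overline{\Omega}^{U,T}_{2n}$ with semi-free action, fixed components $F_m$ and normal bundles $E_m^+\oplus E_m^-$. By Lem. \ref{lemma_projective_generatosrs_semi-free},
\begin{align}
[M]=\sum_m[\mathbb{P}^t_\mathbb{C}(E_m^+\oplus E_m^-\oplus\mathbb{C})].
\end{align}
Each summand is the image of $[F_m\to BU(n_m^+)\times BU(n_m^-)]$ under the homomorphism
\begin{align}
\Phi:\Omega^U_{2k}(BU(k^+)\times BU(k^-))\to\overline{\Omega}^{U,T}_{2n},\qquad [F\to BU(k^+)\times BU(k^-)]\mapsto[\mathbb{P}^t_\mathbb{C}(E^+\oplus E^-\oplus\mathbb{C})].
\end{align}
So it suffices to show two things: $\Phi$ is a well-defined homomorphism of $\Omega^U_*$-modules, and it sends the Conner--Floyd basis to the stated twisted projective spaces.

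For well-definedness, I would take a unitary bordism $W$ between two representatives, together with bundles $E^\pm_W$ extending the classified bundles. Then $\mathbb{P}^t_\mathbb{C}(E^+_W\oplus E^-_W\oplus\mathbb{C})$ is a manifold with boundary, since $\widehat T$ still acts freely on the sphere bundle. Its semi-free $T$-action is given fibrewise, and it provides the equivariant bordism. Additivity is clear for disjoint unions.

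For $\Omega^U_*$-linearity, observe that if $F=M_0\times F'$ and the bundle is pulled back from $F'$, then
\begin{align}
\mathbb{P}^t_\mathbb{C}(E^+\oplus E^-\oplus\mathbb{C})\cong M_0\times\mathbb{P}^t_\mathbb{C}(E'^+\oplus E'^-\oplus\mathbb{C}),
\end{align}
with $T$ acting trivially on $M_0$. Here one checks that the unitary structure is the product structure, which I expect to be routine.

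By the cited result of Conner and Floyd, every class in $\Omega^U_{2k}(BU(k^+)\times BU(k^-))$ is a finite $\Omega^U_*$-linear combination of the classifying maps of
\begin{align}
\gamma^+_{s_1}\times\cdots\times\gamma^+_{s_{k^+}}\times\gamma^-_{r_1}\times\cdots\times\gamma^-_{r_{k^-}}.
\end{align}
For this one needs that $\Omega^U_*(BU(k^+)\times BU(k^-))\cong\Omega^U_*(BU(k^+))\otimes_{\Omega^U_*}\Omega^U_*(BU(k^-))$. This Künneth isomorphism holds because both factors are free $\Omega^U_*$-modules. Applying $\Phi$ and using linearity then gives $[M]$ as a finite sum of terms $[M_0]\times[\mathbb{P}^t_\mathbb{C}(\cdots\oplus\mathbb{C})]$. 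Any negative coefficients are absorbed by replacing $M_0$ with $-M_0$.

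The main obstacle is the compatibility of $\Phi$ with the module structure and bordism relations. Concretely, one must verify that the unitary (stably complex) structure on the twisted projective space is natural in the base, including the orientation convention coming from the $\widehat T$-quotient. I would handle this by writing the tangent bundle of $\mathbb{P}^t_\mathbb{C}(E\oplus\mathbb{C})$ as the pullback of $TF$ plus the fibrewise term $\mathrm{Hom}(L,E\oplus\mathbb{C})$ twisted appropriately, where $L$ is the tautological line. This decomposition is visibly natural under pullback.
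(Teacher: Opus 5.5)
Your proposal is correct and follows the paper's route: Lem. \ref{lemma_projective_generatosrs_semi-free} reduces $[M]$ to the fixed-point data in $\Omega^U_*(BU(k^+)\times BU(k^-))$, which you expand in the Conner--Floyd basis of products of tautological line bundles and push through the twisted projectivization map. You also make explicit several steps the paper leaves implicit: the well-definedness of that map, its $\Omega^U_*$-linearity (via $\mathbb{P}^t_\mathbb{C}$ of a bundle pulled back to $M_0\times F'$ being a product), and the Künneth step.
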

    
Note that the twisted projective spaces of Prop. \ref{basis_semi-free_actions} define projective fibrations over projective spaces:
\begin{align}
\xymatrix{
    \mathbb{P}_\mathbb{C}(\mathbb{C}^{k^++1} \times \overline{\mathbb{C}}^{k^-}) \ar[r] 
    &\mathbb{P}^t_\mathbb{C}(\gamma_{s_1}^+ \times \cdots \times \gamma_{s_{k^+}}^+ \times \gamma_{r_1}^- \times \cdots \times \gamma_{r_{k^-}}^- \oplus \mathbb{C})
    \ar[d]\\ &
    \mathbb{P}_\mathbb{C}^{s_1} \times \cdots \times \mathbb{P}_\mathbb{C}^{s_{k^+}} \times \mathbb{P}_\mathbb{C}^{r_1} \times \cdots \times \mathbb{P}_\mathbb{C}^{r_{k^-}}.
    }
\end{align}

\subsection{$\mathbb{Z}_2$-equivariant unoriented bordism}

The unoriented version of the previous procedure works without complications. We have the isomorphism:
\begin{align}
    {\Omega}^{O,\mathbb{Z}_2}_{n}\{\mathcal{A}, \{1\}\} \cong &\bigoplus_{k+k'=n} \Omega^{O}_{k}(BO(k')) \\
    [(N,\partial N)] \mapsto & \sum_{m} [F_m \to BO(n_m')]
\end{align}
where $F=\cup_m F_m$ is the fixed point set of the $\mathbb{Z}_2$-action on $N$, $F_m$'s are the connected components, and the map $F_m \to BO(n_m')$ classifies the normal bundle of $F_m$ in $N$ whose rank is $n_m'$.

We can construct closed $\mathbb{Z}_2$-equivariant manifolds from the information on the normal bundles using the following homomorphism:
\begin{align}
    \Omega^{O}_{k}(BO(k')) &\to {\Omega}^{O,\mathbb{Z}/2}_{n} \\
    [F \to BO(k')]  &\mapsto [\mathbb{P}_\mathbb{R}(E \oplus \mathbb{R})],
\end{align}
with $E \to F$ the real bundle classified by the map
$F \to BO(k')$, and $\mathbb{P}_\mathbb{R}(E \oplus \mathbb{R})$ is the projectivization of the bundle $ E\oplus \mathbb{R}$ with $\mathbb{Z}_2$-action defined by the equation:
\begin{align}
    t \cdot [\nu : x] = [tv:x]
\end{align}
where $t =\pm 1$, $\nu \in E $ and $x \in \mathbb{R}$.

The fixed point sets of this projective space are:
\begin{align}
    \mathbb{P}_\mathbb{R}(E \oplus \mathbb{R})^{\mathbb{Z}_2} = F \sqcup \mathbb{P}_\mathbb{R}(E)
\end{align}
with normal bundles isomorphic to the following bundles:
\begin{align}
    E &\to F & \mathbb{R}\times \mathbb{P}_\mathbb{R}(E)  \to \mathbb{P}_\mathbb{R}(E) 
\end{align}
and with the $\mathbb{Z}_2$-actions on both bundles non-trivial.

Analogously as in Lem. \ref{lemma_projective_generatosrs_semi-free}, we have the following result:
\begin{lemma} \label{lemma_projective_generatosrs_unoriented}
    Let $N$ be a closed manifold with $\mathbb{Z}_2$-action. Let $F = \cup_m F_m$ be its fixed point set and $E_m$ the normal bundle of $F_m$ in $N$. Then the following holds in ${\Omega}^{O,\mathbb{Z}_2}_{n}$:
    \begin{align}
        [N] = \sum_m [\mathbb{P}_\mathbb{R}(E_m \oplus \mathbb{R})].
    \end{align}
\end{lemma}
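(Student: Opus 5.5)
The plan is to reduce the statement to Lem. \ref{lema_bordism_Z2k}, applied separately to each connected component of the fixed point set, and to use additivity of bordism classes under disjoint union. The ambient group is now just $\mathbb{Z}_2$, with $\tau$ its generator.

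First I would apply Lem. \ref{lema_bordism_Z2k} with $k=1$ and $\tau$ the nontrivial element. This gives a $\mathbb{Z}_2$-equivariant cobordism between $N$ and $\mathbb{P}_\mathbb{R}(E\oplus\mathbb{R})$, where $E\to F$ is the normal bundle of $F=N^\tau$. I then need to fix which $\mathbb{Z}_2$-action the projectivization carries. In that lemma the residual $\mathbb{Z}_2^k$-action on $\mathbb{P}_\mathbb{R}(E\oplus\mathbb{R})=S(E\oplus\mathbb{R})/\mathbb{Z}_2$ is induced from the action of $\tau$ on $E$. Here $\tau$ acts on the normal fibers by $-1$, since its differential on the normal bundle of a fixed component is $-\mathrm{id}$. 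So the induced action is $[v:x]\mapsto[-v:x]$, which is exactly the action $t\cdot[v:x]=[tv:x]$ used to define the classes in the statement.

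To check this carefully, I would trace through the cobordism $W$ built in the proof of Lem. \ref{lema_bordism_Z2k}. After removing tubular neighbourhoods of the fixed set $N\sqcup F$ of the auxiliary involution and taking the quotient, the remaining action of the original $\mathbb{Z}_2$ acts on the boundary piece coming from $F$ via $\tau$ on $E$ and trivially on the extra $\mathbb{R}$. The quotient of $S(E\oplus\mathbb{R})$ by the antipodal map, with this action, is $\mathbb{P}_\mathbb{R}(E\oplus\mathbb{R})$ carrying the stated involution. I expect this identification of the residual action to be the only real obstacle, and it is bookkeeping rather than a new idea.

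Finally, $F=\sqcup_m F_m$ and $E=\sqcup_m E_m$, so $\mathbb{P}_\mathbb{R}(E\oplus\mathbb{R})=\sqcup_m\mathbb{P}_\mathbb{R}(E_m\oplus\mathbb{R})$ equivariantly. Since disjoint union is the sum in ${\Omega}^{O,\mathbb{Z}_2}_n$, this gives $[N]=\sum_m[\mathbb{P}_\mathbb{R}(E_m\oplus\mathbb{R})]$. Each summand has dimension $\dim F_m+\mathrm{rank}\,E_m=n$, so the identity holds in degree $n$ as claimed.
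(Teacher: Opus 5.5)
Your proposal is correct and follows the route the paper intends. The paper states this lemma without proof, as the unoriented analogue of Lem.~\ref{lemma_projective_generatosrs_semi-free}, so it amounts to applying Lem.~\ref{lema_bordism_Z2k} with $k=1$ and $\tau$ the generator, then splitting over the components $F_m$. Your check that the residual involution on $S(E\oplus\mathbb{R})/\mathbb{Z}_2$ is $[v:x]\mapsto[-v:x]$ supplies the one detail the paper leaves implicit.
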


We are now left, once again, with the task of finding a good set of generators for the bordism group $\Omega^{O,\mathbb{Z}_2}_k(BO(k'))$. Thom proved the isomorphism \cite{Thom}:
\begin{align}
    H_*(BO(n); \mathbb{Z}_2) \otimes_{\mathbb{Z}_2} \Omega^{O}_* \cong \Omega^O_*(BO(n))
\end{align}
and the canonical map $BO(1)^{n} \to BO(n)$ induces an isomorphism of groups
\begin{align}
    H_*(BO(1)^{n}; \mathbb{Z}_2)^{\mathfrak{S}_{n}} \cong H_*(BO(n);\mathbb{Z}_2)
\end{align}
where the symmetric group $\mathfrak{S}_{n}$ acts by permutation. This isomorphism follows from the isomorphism in cohomology:
\begin{align}
    H^*(BO(n); \mathbb{Z}_2) \cong \mathbb{Z}_2[\omega_1, \dots, \omega_n] \cong \mathbb{Z}_2[x_1, \cdots, x_n]^{\mathfrak{S}_n} \cong H^*(BO(1)^n; \mathbb{Z}_2)^{\mathfrak{S}_n}.
\end{align}

Since the homology of $BO(1) = \mathbb{P}_\mathbb{R}^\infty$ is generated by the canonical maps $\mathbb{P}_\mathbb{R}^s \to \mathbb{P}_\mathbb{R}^\infty$, we see that
$\Omega_*^O(BO(n))$ is a free $\Omega^{O}_*$-module with basis:
\begin{align}
    \{ \xi_{u_1u_2 \dots u_n} | 0\leq u_1 \leq u_2 \leq \cdots \leq u_n \}
\end{align}
where $ \xi_{u_1u_2 \dots u_n}$ represents the classifying map of the product of the tautological line bundles
\begin{align}
    \xymatrix{
\eta_{u_1} \times \cdots \times \eta_{u_n} \ar[d]& \\
\mathbb{P}_\mathbb{R}^{u_1} \times \cdots \times \mathbb{P}_\mathbb{R}^{u_n} \ar[r] & BO(n).
    }
\end{align}

Analogously to Prop. \ref{basis_semi-free_actions} we obtain the following result:

\begin{proposition} \label{basis_Z2-actions}
    Any element in ${\Omega}^{O,\mathbb{Z}_2}_{*}$ can be written as a finite sum of products 
    \begin{align}
        [N_0] \times [\mathbb{P}_\mathbb{R}(\eta_{u_1} \times \cdots \times \eta_{u_{k'}}  \oplus \mathbb{R})]
    \end{align}
    where $[N_0] \in \Omega^{O}_{*}$ is a manifold with trivial $\mathbb{Z}_2$-action.
\end{proposition}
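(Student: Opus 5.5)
The proof follows the template of Prop. \ref{basis_semi-free_actions}, in three steps: reduce to fixed-point data, use the free module structure of $\Omega^O_*(BO(k'))$, and check that the construction $[F\to BO(k')]\mapsto[\mathbb{P}_\mathbb{R}(E\oplus\mathbb{R})]$ is compatible with products by manifolds carrying the trivial action.

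First, take any closed $\mathbb{Z}_2$-manifold $N$. By Lem. \ref{lemma_projective_generatosrs_unoriented},
\begin{align}
[N]=\sum_m[\mathbb{P}_\mathbb{R}(E_m\oplus\mathbb{R})] \in \Omega^{O,\mathbb{Z}_2}_*,
\end{align}
where $E_m\to F_m$ are the normal bundles of the fixed components. So $[N]$ lies in the image of the homomorphism
\begin{align}
\bigoplus_{k+k'=n}\Omega^O_k(BO(k'))\to\Omega^{O,\mathbb{Z}_2}_n, \qquad [F\to BO(k')]\mapsto[\mathbb{P}_\mathbb{R}(E\oplus\mathbb{R})].
\end{align}
I would check that this is well defined and additive. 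A bordism $W$ of the classifying maps gives a bundle over $W$, and its projectivization $\mathbb{P}_\mathbb{R}(E_W\oplus\mathbb{R})$ is a $\mathbb{Z}_2$-bordism. Disjoint unions go to disjoint unions.

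Second, use the Thom isomorphism together with the $\mathfrak{S}_{k'}$-invariant description recalled above. Together they say that $\Omega^O_*(BO(k'))$ is a free $\Omega^O_*$-module with basis the $\xi_{u_1\dots u_{k'}}$. Hence every class $[F\to BO(k')]$ can be written as a finite sum
\begin{align}
\sum [N_0]\cdot\xi_{u_1\dots u_{k'}}.
\end{align}
Here $[N_0]\cdot\xi$ is represented by $N_0\times\mathbb{P}_\mathbb{R}^{u_1}\times\cdots\times\mathbb{P}_\mathbb{R}^{u_{k'}}$, mapped to $BO(k')$ through the projection that forgets $N_0$. The bundle it classifies is therefore the pullback of $\eta_{u_1}\times\cdots\times\eta_{u_{k'}}$ along the projection.

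Third, for such a pullback bundle the projectivization splits:
\begin{align}
\mathbb{P}_\mathbb{R}(\pi^*E\oplus\mathbb{R})\cong N_0\times\mathbb{P}_\mathbb{R}(E\oplus\mathbb{R}),
\end{align}
and this is $\mathbb{Z}_2$-equivariant when $\mathbb{Z}_2$ acts trivially on $N_0$. Combining this with additivity gives the stated finite sum of products.

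The main point needing care is the second step: the free module statement for $\Omega^O_*(BO(k'))$ must come with exactly the geometric representatives $\xi_u$ and their $\Omega^O_*$-multiples, rather than merely an abstract basis. This is what the Thom isomorphism $H_*(BO(k');\mathbb{Z}_2)\otimes\Omega^O_*\cong\Omega^O_*(BO(k'))$ provides, because the image of the homology basis is realized by the maps of products of projective spaces. Everything else is formal.
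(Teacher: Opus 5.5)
Your proposal is correct and follows essentially the same route as the paper. You decompose $[N]$ via Lem.~\ref{lemma_projective_generatosrs_unoriented} into projectivized normal bundles of the fixed components, then expand each class in $\Omega^O_*(BO(k'))$ in the free $\Omega^O_*$-basis $\xi_{u_1\dots u_{k'}}$ given by Thom's isomorphism. You also make explicit two points the paper leaves implicit: that $[F\to BO(k')]\mapsto[\mathbb{P}_\mathbb{R}(E\oplus\mathbb{R})]$ is well defined on bordism classes, and that projectivizing a bundle pulled back along $N_0\times B\to B$ splits off $N_0$ equivariantly, which is exactly what is needed to pass from the module basis to the stated products.
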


Now that we have explicit bases for the bordism groups $\overline{\Omega}^{U,T}_{2*}$ and ${\Omega}^{O,\mathbb{Z}_2}_{*}$ comprised by complex and real algebraic varieties presented in Prop. \ref{basis_semi-free_actions} and Prop. \ref{basis_Z2-actions} respectively, we  just need to check that the set of real points of the complex algebraic varieties that generate the bordism group of semi-free actions spans the basis of real algebraic varieties of the $\mathbb{Z}_2$-equivariant unoriented bordism group.

The real points of the bundle $\gamma_s \to \mathbb{P}_\mathbb{C}^s$ is the bundle $\eta_s \to \mathbb{P}_\mathbb{R}^s $, and therefore the real points of the twisted projective bundle:
\begin{align}
    \mathbb{P}^t_\mathbb{C}(\gamma_{s_1}^+ \times \cdots \times \gamma_{s_{k^+}}^+ \times \gamma_{r_1}^- \times \cdots \times \gamma_{r_{k^-}}^- \oplus \mathbb{C})
\end{align}
is the real projective bundle:
\begin{align}
\mathbb{P}_\mathbb{R}(\eta_{s_1} \times \cdots \times \eta_{s_{k^+}} \times \eta_{r_1} \times \cdots \times \eta_{r_{k^-}} \oplus \mathbb{R})
\end{align}
where the induced $\mathbb{Z}_2$-action reduces to the action:
\begin{align}
t \cdot [\nu_1: \dots : \nu_{{k^+}}:\nu'_1: \cdots : \nu'_{k^-}:x] = [t\nu_1: \dots : t\nu_{{k^+}}:t\nu'_1: \cdots : t\nu'_{k^-}:x] 
\end{align}
for $t = \pm 1$.

\subsection{Equivariant Milnor map}

We now have all the necessary ingredients to deduce the existence of the Milnor map for semi-free actions based on  Prop. \ref{basis_semi-free_actions} and Prop. \ref{basis_Z2-actions}:

\begin{theorem} \label{Milnor_map_semi-free}
    There exists an equivariant Milnor map defined on generators:
    \begin{align}
        \mu^T : \overline{\Omega}^{U,T}_{2*} & \to \Omega^{O,\mathbb{Z}_2}_* \\ \nonumber
    [\mathbb{P}^t_\mathbb{C}(\gamma_{s_1}^+ \times \cdots \times \gamma_{s_{k^+}}^+ \times \gamma_{r_1}^- \times \cdots \times \gamma_{r_{k^-}}^- \oplus \mathbb{C})] & \mapsto    
    [\mathbb{P}_\mathbb{R}(\eta_{s_1} \times \cdots \times \eta_{s_{k^+}} \times \eta_{r_1} \times \cdots \times \eta_{r_{k^-}} \oplus \mathbb{R})]\\ \nonumber
   [ \mathbb{P}_\mathbb{C}^s], \ [H_\mathbb{C}(m,n)] &\mapsto [\mathbb{P}_\mathbb{R}^s], \ [H_\mathbb{R}(m,n)].
    \end{align}
   This map is moreover is surjective.
\end{theorem}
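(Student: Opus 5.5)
The plan is to define $\mu^T$ as a homomorphism of modules over the Milnor map $\mu:\Omega^U_{2*}\to\Omega^O_*$, and then read off surjectivity from the two bases already obtained. The starting point is the splitting in Lem. \ref{lemma_projective_generatosrs_semi-free}, which gives
\begin{align}
\overline{\Omega}^{U,T}_{2*} \cong \bigoplus_{k+k^++k^-=*} \Omega^U_{2k}(BU(k^+)\times BU(k^-)).
\end{align}
The Conner--Floyd description recalled before Prop. \ref{basis_semi-free_actions} shows this is a \emph{free} $\Omega^U_*$-module. A basis is given by the twisted projective spaces $P^t(\underline s,\underline r):=\mathbb{P}^t_\mathbb{C}(\gamma_{s_1}^+\times\cdots\times\gamma_{r_{k^-}}^-\oplus\mathbb{C})$, with $s_1\le\cdots\le s_{k^+}$ and $r_1\le\cdots\le r_{k^-}$, and with different triples $(k,k^+,k^-)$ landing in different summands.

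I would define $\mu^T$ by
\begin{align}
\mu^T\big([M_0]\times[P^t(\underline s,\underline r)]\big)=\mu([M_0])\times[\mathbb{P}_\mathbb{R}(\eta_{s_1}\times\cdots\times\eta_{r_{k^-}}\oplus\mathbb{R})],
\end{align}
where $\mu$ is the non-equivariant Milnor map, already defined on the generators $\mathbb{P}^s_\mathbb{C}$ and $H_\mathbb{C}(m,n)$. Because the module is free, this extends uniquely to an additive map. On basis elements it is the ``real points'' map: the real locus of $P^t(\underline s,\underline r)$ is the stated real projective bundle, and the involution $t=-1$ restricts to $[\nu:x]\mapsto[-\nu:x]$. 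Taking $\underline s,\underline r$ empty, $\mu^T$ restricts to $\mu$ on trivial actions, which gives the assignments $[\mathbb{P}^s_\mathbb{C}]\mapsto[\mathbb{P}^s_\mathbb{R}]$ and $[H_\mathbb{C}(m,n)]\mapsto[H_\mathbb{R}(m,n)]$ as stated. The main point to check is that the definition does not depend on the choice of basis representative. Freeness makes the basis expansion unique, so this is automatic, provided the basis elements really are independent across the different fixed-point data. That independence comes from the injectivity of the fixed-point map into the relative group.

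For surjectivity I would use Prop. \ref{basis_Z2-actions}. Every class in $\Omega^{O,\mathbb{Z}_2}_*$ is a sum of products $[N_0]\times[\mathbb{P}_\mathbb{R}(\eta_{u_1}\times\cdots\times\eta_{u_{k'}}\oplus\mathbb{R})]$. By surjectivity of $\mu$, write $[N_0]=\mu([M_0])$. Then take $k^+=k'$, $k^-=0$ and $s_i=u_i$. The image of $[M_0]\times[P^t(\underline u,\emptyset)]$ is exactly the required term, because the $\mathbb{Z}_2$-actions agree: the induced action multiplies every $\eta$-coordinate by $t$, which is the action used in Prop. \ref{basis_Z2-actions}. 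Summing these preimages gives surjectivity.

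The step I expect to be the main obstacle is dimension bookkeeping, together with the fact that $\Omega^{O,\mathbb{Z}_2}_*$ is only spanned, not freely spanned, by these products. The first issue is easy: $P^t(\underline s,\underline r)$ has complex dimension $\sum s_i+\sum r_j+k^++k^-$, and its real points have the same real dimension, so $\mu^T$ halves dimension. The second issue does not affect well-definedness, since $\mu^T$ is defined using the basis on the \emph{source} side. It only matters if one wants $\mu^T$ to be canonical, meaning independent of the chosen generators. For that I would compare with Prop. \ref{generalization_M=FxF}: it shows that $\mu^T(x)\times\mu^T(x)$ agrees with the forgetful image of $x$ for algebraic generators. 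I would not claim this identity determines $\mu^T$ uniquely, so the defined-on-generators statement is what the proof establishes.
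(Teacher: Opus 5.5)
Your route is the paper's: you define $\mu^T$ $\mu$-linearly on the twisted projective generators of Prop.~\ref{basis_semi-free_actions}, and you get surjectivity from Prop.~\ref{basis_Z2-actions} with $k^-=0$, $s_i=u_i$. The surjectivity half is fine. \textbf{The gap is well-definedness, which you base on a false basis claim.}

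Lem.~\ref{lemma_projective_generatosrs_semi-free} does not identify $\overline{\Omega}^{U,T}_{2*}$ with $\bigoplus\Omega^U_{2k}(BU(k^+)\times BU(k^-))$. That sum is the relative group, and it splits as $\overline{\Omega}^{U,T}_{2*}\oplus\Omega^U_{2*-2}(BU(1))$. Your generators are the images of its free basis under the projective-bundle retraction $r$. Since $\ker r\cong\Omega^U_{2*-2}(BU(1))\neq 0$, they only span, which is all Prop.~\ref{basis_semi-free_actions} asserts.

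Injectivity of the fixed-point map $\Psi$ does not restore independence. The class $\Psi[\mathbb{P}^t_\mathbb{C}(E^+\oplus E^-\oplus\mathbb{C})]$ is not the single datum $[F\to BU(k^+)\times BU(k^-)]$. The section at infinity $\mathbb{P}^t_\mathbb{C}(E^+\oplus E^-)$ is also fixed, and it contributes a second term in a summand with $(k^+,k^-)=(0,1)$.

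Concretely, $\overline{\Omega}^{U,T}_2$ has rank $3-1=2$, yet you list three basis elements there. Apply Lem.~\ref{lemma_projective_generatosrs_semi-free} to $\mathbb{P}^1_\mathbb{C}=\mathbb{P}^t_\mathbb{C}(\gamma_0^+\oplus\mathbb{C})$ with the rotation; its fixed points have normal representations $\mathbb{C}^+$ and $\mathbb{C}^-$. The lemma then yields $[\mathbb{P}^t_\mathbb{C}(\gamma_0^-\oplus\mathbb{C})]=0$.

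So the extension is not automatic. You must show your rule kills every relation $[\mathbb{P}^t_\mathbb{C}(N\oplus\mathbb{C})]=0$, where $N$ is the normal line of the section at infinity. In the example the proposed image, a circle with a reflection, does bound. In general, though, expanding $[\mathbb{P}^t_\mathbb{C}(E^+\oplus E^-)\xrightarrow{N}BU(1)]$ in the basis of $\Omega^U_*(BU(1))$, you would need the Milnor map on $\Omega^U_*(BU(1))$ to compute real points of these varieties. Freeness does not give you that.

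The missing ingredient is the identity you set aside. Let $\Phi:\overline{\Omega}^{U,T}_{2*}\to\Omega^{O,\mathbb{Z}_2}_{2*}$ be the forgetful map; it is well defined. Take a generator $g$: a product of a twisted projective space with copies of $\mathbb{P}^s_\mathbb{C}$ and $H_\mathbb{C}(m,n)$. For the twisted factor, use the underlying projective bundle $\mathbb{P}_\mathbb{C}(E^+\oplus\overline{E^-}\oplus\mathbb{C})$ with its honest complex structure; only the unoriented class matters. Then Prop.~\ref{generalization_M=FxF} gives $\Phi(g)=[g_\mathbb{R}]^2$.

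Since $\Omega^{O,\mathbb{Z}_2}_*$ is commutative of characteristic $2$, squaring is additive, so $\Phi(\sum c_ig_i)=(\sum c_i[g_{i,\mathbb{R}}])^2$. Squaring is also injective: the fixed-point homomorphism embeds $\Omega^{O,\mathbb{Z}_2}_*$ as a subring of $\bigoplus_k\Omega^O_*(BO(k))$, which is a polynomial ring over $\mathbb{Z}_2$. Hence $\mu^T(x):=\sqrt{\Phi(x)}$ is a well-defined homomorphism. It takes the stated values on generators and does not depend on any choice of generators. Your surjectivity argument then goes through unchanged.
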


We apply Prop. \ref{generalization_M=FxF} to the previous result
and we obtain a generalization of Milnor's result \cite[Thm. 1]{Milnor-SW}:

\begin{corollary}
    A $\mathbb{Z}_2$-equivariant unoriented cobordism class contains a complex manifold 
    with semi-free circle action if and only if
    it contains a square $N \times N$.
\end{corollary}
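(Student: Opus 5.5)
The corollary says: a class $\alpha\in\Omega^{O,\mathbb{Z}_2}_*$ contains a complex manifold with semi-free circle action if and only if it contains a square $N\times N$. I read ``contains a complex manifold $V$'' as: $\alpha=[V]_2$, where $V$ is a unitary $T$-manifold with semi-free action, and $V$ is regarded as a $\mathbb{Z}_2$-manifold through $\mathbb{Z}_2\subset T$. Dually, I read ``contains a square'' as $\alpha=[N\times N]_2$ with the diagonal $\mathbb{Z}_2$-action. I will prove the two directions separately, mirroring Milnor's argument but with Thm.~\ref{Milnor_map_semi-free} replacing the non-equivariant surjectivity.

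\emph{Squares are realized by complex manifolds.} Let $N$ be a closed $\mathbb{Z}_2$-manifold.
\begin{itemize}
\item By surjectivity in Thm.~\ref{Milnor_map_semi-free}, there is $x\in\overline{\Omega}^{U,T}_{2*}$ with $\mu^T(x)=[N]_2$.
\item Write $x=\sum_i [M_i]\times[P_i]$ as in Prop.~\ref{basis_semi-free_actions}. Here each $[M_i]\in\Omega^U_*$ is a polynomial in $\mathbb{P}_\mathbb{C}^s$ and $H_\mathbb{C}(m,n)$. Each $P_i$ is a twisted projective bundle, a smooth complex variety with a conjugation commuting with the $\mathbb{Z}_2\subset T$ action, whose real points are the real generators.
\item Mod 2, coefficients may be taken in $\{0,1\}$, so $x$ is a sum of products of such varieties. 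Call this sum $V$, a disjoint union of smooth complex varieties with semi-free $T$-action and equivariant conjugation. Its real points are $V_\mathbb{R}$, and $[V_\mathbb{R}]_2=[N]_2$ by definition of $\mu^T$.
\item Prop.~\ref{generalization_M=FxF} with $k=1$ (forgetting the flip) gives $[V]_2=[V_\mathbb{R}\times V_\mathbb{R}]_2$ in $\Omega^{O,\mathbb{Z}_2}_*$.
\item Squaring is additive mod 2: the cross terms $A\times B\sqcup B\times A$ bound, being a free-swap double. Hence $[V_\mathbb{R}\times V_\mathbb{R}]_2$ depends only on $[V_\mathbb{R}]_2=[N]_2$, and equals $[N\times N]_2$. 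So $[N\times N]_2$ contains $V$.
\end{itemize}
The one point to verify is that the cross-term cancellation respects the diagonal $\mathbb{Z}_2$-action. It does, because the swap commutes with the diagonal action.

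\emph{Complex manifolds give squares.} Let $M$ be unitary with semi-free $T$-action; I must show $[M]_2$ is a square. The main obstacle is that an arbitrary $M$ need not carry a conjugation. The plan:
\begin{itemize}
\item The forgetful map $\overline{\Omega}^{U,T}_{2*}\to\Omega^{O,\mathbb{Z}_2}_*$ is additive and multiplicative, so it suffices to check it on generators.
\item By Prop.~\ref{basis_semi-free_actions}, $[M]=\sum_i [M_i]\times[P_i]$ with $M_i$ products of $\mathbb{P}_\mathbb{C}^s$, $H_\mathbb{C}(m,n)$ (integer coefficients, reduced mod 2) and $P_i$ twisted projective bundles. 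All of these are smooth varieties with conjugation and smooth real points.
\item Prop.~\ref{generalization_M=FxF} then gives $[M]_2=\sum_i[(M_i\times P_i)_\mathbb{R}]_2^{\times 2}$.
\item By the same additivity of squaring, $[M]_2=[N\times N]_2$ with $N=\bigsqcup_i (M_i\times P_i)_\mathbb{R}$.
\end{itemize}
Equivalently, the forgetful map equals $\alpha\mapsto\mu^T(\alpha)^2$, and the corollary follows.
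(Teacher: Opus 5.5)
Your argument is correct and follows the same route as the paper. Surjectivity of $\mu^T$ from Thm.~\ref{Milnor_map_semi-free} gives one direction and Prop.~\ref{generalization_M=FxF} the other, which together amount to factoring the forgetful map $\overline{\Omega}^{U,T}_{2*}\to\Omega^{O,\mathbb{Z}_2}_{2*}$ as squaring composed with $\mu^T$; your reduction to the conjugation-carrying generators of Prop.~\ref{basis_semi-free_actions}, with additivity of squaring in characteristic $2$, makes explicit a step the paper passes over, namely that a general unitary manifold with semi-free circle action need not admit a conjugation.
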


\begin{proof}
 The surjectivity of the Milnor map of Thm.
\ref{Milnor_map_semi-free} implies that any square comes from a complex manifold with semi-free circle action. Prop. \ref{generalization_M=FxF} implies that
the $\mathbb{Z}_2$-equivariant unoriented class of a complex manifold with semi-free circle action is cobordant to the square of its real set. We have therefore the commutativity of the following diagram:
\begin{align}
    \xymatrix{
    \overline{\Omega}^{U,T}_{2*} \ar[rr]^F \ar@{->>}[rd]^{\mu^T}&& {\Omega}^{O,\mathbb{Z}_2}_{2*}\\
    & {\Omega}^{O,\mathbb{Z}_2}_{*}
    \ar[ur]^{\wedge 2}&
    }
\end{align}
where $F$ is the forgetful map that sends a unitary manifold with semi-free circle action $[M]$ to the underlying unoriented bordism class $[M]_2$ with $\mathbb{Z}_2$-action, the map $\wedge 2$ sends a $\mathbb{Z}_2$-equivariant unoriented bordism class to its square $[N]_2 \mapsto [N \times N]_2$, and $\mu^T$ is the equivariant Milnor map.
\end{proof}

One notices that there is an intermediate bordism ring between the bordism ring of unitary semi-free actions and the $\mathbb{Z}_2$-equivariant unoriented bordism ring. This is the $\mathbb{Z}_2$-equivariant unitary bordism $\Omega^{U, \mathbb{Z}_2}_*$ of unitary involutions. Moreover, this ring sits in the middle of the ring homomorphisms:
\begin{align}
     \overline{\Omega}^{U,T}_{2*} \to  {\Omega}^{U,\mathbb{Z}_2}_{2*} \to {\Omega}^{O,\mathbb{Z}_2}_{2*},
\end{align}
where the first map restricts the action to the subgroup $\mathbb{Z}_2 \subset T$ and the second map forgets the unitary structure.

A similar argument as the one given in Prop. \ref{basis_semi-free_actions}
permits to show that any element of the the bordism ring
$ {\Omega}^{U,\mathbb{Z}_2}_{2*}$ may be written as a finite sum of projective spaces 
\begin{align}
        [M_0] \times [\mathbb{P}_\mathbb{C}(\gamma_{s_1} \times \cdots \times \gamma_{s_{k}}  \oplus \mathbb{C})]
\end{align}
where $[M_0] \in \Omega^{U}_{2*}$ is a unitary manifold with trivial $\mathbb{Z}_2$-action, and the projective spaces are defined as in Eqn. \eqref{definition_bundle_over_CP}.

\begin{corollary}
\label{Milnor_map_Z2}
    The $\mathbb{Z}_2$-equivariant Milnor map:
 \begin{align} \label{equivariant_Milnor-Map}
        \mu^{\mathbb{Z}_2} : {\Omega}^{U,\mathbb{Z}_2}_{2*} & \to \Omega^{O,\mathbb{Z}_2}_* \\ \nonumber
    [\mathbb{P}_\mathbb{C}(\gamma_{s_1} \times \cdots \times \gamma_{s_{k}}\oplus \mathbb{C})] & \mapsto    
    [\mathbb{P}_\mathbb{R}(\eta_{s_1} \times \cdots \times \eta_{s_{k}}  \oplus \mathbb{R})]\\ \nonumber
   [ \mathbb{P}_\mathbb{C}^s], \ [H_\mathbb{C}(m,n)] &\mapsto [\mathbb{P}_\mathbb{R}^s], \ [H_\mathbb{R}(m,n)],
    \end{align}
 is also surjective.
\end{corollary}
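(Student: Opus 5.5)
The plan is to deduce surjectivity of $\mu^{\mathbb{Z}_2}$ from Thm. \ref{Milnor_map_semi-free} by factoring through restriction of the circle action to $\mathbb{Z}_2\subset T$. Surjectivity only concerns the images of generators, and it is compared against the basis of $\Omega^{O,\mathbb{Z}_2}_*$ from Prop. \ref{basis_Z2-actions}. Concretely, I aim for a commutative triangle
\begin{align}
\mu^{\mathbb{Z}_2}\circ \mathrm{res} = \mu^T, \qquad \mathrm{res}: \overline{\Omega}^{U,T}_{2*} \to \Omega^{U,\mathbb{Z}_2}_{2*},
\end{align}
checked on the module generators $[M_0]\times[\mathbb{P}^t_\mathbb{C}(\gamma^+_{s_1}\times\cdots\times\gamma^-_{r_{k^-}}\oplus\mathbb{C})]$ of Prop. \ref{basis_semi-free_actions}. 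Given this, surjectivity of $\mu^T$ forces surjectivity of $\mu^{\mathbb{Z}_2}$.

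\textbf{Step 1: restriction of a twisted projective space.} Restricting the $T$-action $[v^+:v^-:z]\mapsto[tv^+:t^{-1}v^-:z]$ to $t=\pm1$ gives $t=t^{-1}$. So $\mathbb{P}^t_\mathbb{C}(E^+\oplus E^-\oplus\mathbb{C})$ with its $\mathbb{Z}_2$-action is the same as $\mathbb{P}_\mathbb{C}(E^+\oplus\overline{E^-}\oplus\mathbb{C})$, with $-1$ acting by $-1$ on the bundle summand. This is the ordinary projectivization of a conjugated bundle: the twisted quotient by $s\mapsto(s,s^{-1},s)$ identifies $E^-$ with $\overline{E^-}$. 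The conjugate line bundle $\overline{\gamma_r}$ over $\mathbb{P}_\mathbb{C}^r$ is not of the form $\gamma_r$. However, by Lem. \ref{lemma_projective_generatosrs_semi-free}'s $\mathbb{Z}_2$-analogue (the version for $\Omega^{U,\mathbb{Z}_2}$ sketched just before the corollary), its class is determined by the fixed point data $[F\to BU(k)]\in\Omega^U_*(BU(k))$. That class is some $\Omega^U_*$-combination of the $\alpha_{s_1\dots s_k}$, so the restriction of each $T$-generator is a combination of the $\mathbb{Z}_2$-generators.

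\textbf{Step 2: compatibility of real points.} Rather than expanding those combinations, I compare real points directly. The real points of $\overline{\gamma_r}$ are again $\eta_r$. Moreover, $\mu^{\mathbb{Z}_2}$ is defined on generators as real points, and by Prop. \ref{generalization_M=FxF} its value is detected by the square of the real locus. Mod 2 the Stiefel--Whitney data of real loci is insensitive to conjugation, consistent with the Davis--Januszkiewicz remark. From this I conclude that $\mu^{\mathbb{Z}_2}(\mathrm{res}\,x)$ equals $\mu^T(x)$ on each generator. As a more direct alternative, I can skip the factorization and observe that the $\mathbb{Z}_2$-generators $\mathbb{P}_\mathbb{C}(\gamma_{s_1}\times\cdots\times\gamma_{s_k}\oplus\mathbb{C})$ map under the displayed formula precisely to the basis elements $\mathbb{P}_\mathbb{R}(\eta_{s_1}\times\cdots\times\eta_{s_k}\oplus\mathbb{R})$ of Prop. \ref{basis_Z2-actions}. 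Together with $\mu$ being surjective onto $\Omega^O_*$ for the coefficients $[M_0]$, this gives surjectivity at once, provided $\mu^{\mathbb{Z}_2}$ is well defined and $\Omega^O_*$-linear.

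\textbf{Main obstacle.} The hard part is well-definedness of $\mu^{\mathbb{Z}_2}$, not surjectivity. Its domain generators are only a spanning set: $\Omega^U_*(BU(k))$ is free, but the splitting of the relative sequence for $\mathbb{Z}_2$ involves $\Omega^{U}_*(B\mathbb{Z}_2)$-type free parts with torsion. I would define the map on the free $\Omega^U_*$-module $\bigoplus\Omega^U_*(BU(k))$ by real points, extended $\mu$-semilinearly. I would then verify it kills the kernel of the projection to $\Omega^{U,\mathbb{Z}_2}_{2*}$, namely the image of the section from free $\mathbb{Z}_2$-manifolds, i.e. boundaries of disc bundles of lines over free parts. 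The check is that the real points of such relations are the corresponding unoriented relations of Lem. \ref{lemma_projective_generatosrs_unoriented}. Once that holds, the surjectivity argument of Step 2 is immediate.
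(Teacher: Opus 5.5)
Your main argument, the factorization $\mu^{\mathbb{Z}_2}\circ\mathrm{res}=\mu^T$ along the restriction $\overline{\Omega}^{U,T}_{2*}\to\Omega^{U,\mathbb{Z}_2}_{2*}$ combined with the surjectivity of $\mu^T$ from Thm.~\ref{Milnor_map_semi-free}, is essentially the paper's proof, which is just that commutative diagram (with the forgetful and squaring maps alongside) plus Thm.~\ref{Milnor_map_semi-free}. Your direct alternative via the basis of Prop.~\ref{basis_Z2-actions} is also valid; moreover, surjectivity only needs the triangle on the generators with $k^-=0$, whose restrictions are literally the $\mathbb{Z}_2$-generators, so the $\overline{\gamma_r}$ bookkeeping in Step 2 can be dropped.
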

 \begin{proof}
 This follows from the commutative diagram
\begin{align}
    \xymatrix{    \overline{\Omega}^{U,T}_{2*} \ar[d]_{\mu^T} \ar[r] & {\Omega}^{U,\mathbb{Z}_2}_{2*} \ar[d]^F \ar[dl]_{\mu^{\mathbb{Z}_2}} \\
    {\Omega}^{O,\mathbb{Z}_2}_{*} \ar[r]^{\wedge 2}& 
    {\Omega}^{O,\mathbb{Z}_2}_{2*} 
    }
\end{align}
and Thm. \ref{Milnor_map_semi-free}.
\end{proof}

\begin{remark} 
The equivariant analogue for $G=\mathbb{Z}_2$ was established by Beem and Wheeler~\cite[Thm. 1]{BeemWheeler}. They proved that the image of the reduction homomorphism $\Omega_{2*}^{U,\mathbb{Z}_2}\to \Omega_{2*}^{O,\mathbb{Z}_2}$ coincides with the image of the squaring homomorphism $x\mapsto x^2$ from $\Omega_{*}^{O,\mathbb{Z}_2}$ to itself (while for finite groups of odd order the containment is proper \cite[Thm. 2]{BeemWheeler}).

  In the framework of our paper, their result is recovered and refined as follows. Cor.~\ref{Milnor_map_Z2}
  asserts the surjectivity of the 
  $\mathbb{Z}_2$-equivariant Milnor map $\mu^{\mathbb{Z}_2} : {\Omega}^{U,\mathbb{Z}_2}_{2*} \to \Omega^{O,\mathbb{Z}_2}_*$, and Prop.~\ref{generalization_M=FxF} shows that the $\mathbb{Z}_2$-equivariant unoriented class of the real points $F$ of a stably almost complex $\mathbb{Z}_2$-manifold is cobordant to the square $F\times F$. Together, these two statements give a geometric reformulation of \cite[Thm. 1]{BeemWheeler} in the language of equivariant Milnor maps: a class in $\Omega^{O,\mathbb{Z}_2}_*$ lies in the image of the reduction homomorphism if and only if it is a square. 
  Beyond recovering the result of Beem and Wheeler, our approach yields three further refinements. First, we construct the equivariant Milnor map on explicit generators, rather than working only with the abstract reduction homomorphism. Second, we compute the kernel of $\mu^{\mathbb{Z}_2}$ explicitly (Lem.~\ref{lemma_kernel_muZ2}). Finally, in Section 4, we identify this kernel with the image of magnetic unitary equivariant bordism under free conjugations (Thm.~\ref{theorem_sequences_Milnor_map} and Thm.~\ref{theorem_sequence_Z2_Milnor_map}).
\end{remark}

Since the $\mathbb{Z}_2$-equivariant Milnor map $\mu^{\mathbb{Z}_2}$ is compatible with the Milnor map on the Grassmannians:
\begin{align}
    \mu^k : \Omega^{U}_{2*}(BU(k)) \to \Omega_*^O(BO(k)),
\end{align}
then we obtain the following result.
\begin{lemma} \label{lemma_kernel_muZ2}The kernel of the $\mathbb{Z}_2$-equivariant Milnor map  is the ideal:
\begin{align}
    \mathrm{ker}(\mu^{\mathbb{Z}_2}) = 2{\Omega}^{U,\mathbb{Z}_2}_{2*} +\Omega^{U,\mathbb{Z}_2}_{2*}\langle [H_{\mathbb{C}}(2^{s-1},2^{s-1})] \colon s \geq 1 \rangle.
\end{align}
\end{lemma}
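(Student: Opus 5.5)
The plan is to reduce the computation of $\mathrm{ker}(\mu^{\mathbb{Z}_2})$ to the non-equivariant Lemma \ref{lemma_kernel_mu}, using the module decompositions of both bordism groups over their fixed-point data.

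First I will fix the decompositions. As in Prop. \ref{basis_semi-free_actions}, but with $\mathbb{Z}_2\subset T$, we have an isomorphism
\begin{align}
{\Omega}^{U,\mathbb{Z}_2}_{2*} \cong \bigoplus_{k} \Omega^{U}_{2*}(BU(k)).
\end{align}
It sends $[M_0\to BU(k)]$ to $[M_0]\times[\mathbb{P}_\mathbb{C}(E\oplus\mathbb{C})]$ and is an isomorphism of $\Omega^U_*$-modules. Here $\Omega^U_*(BU(k))$ is free over $\Omega^U_*$ on the classes $\alpha_{s_1\dots s_k}$. Likewise, Prop. \ref{basis_Z2-actions} gives
\begin{align}
\Omega^{O,\mathbb{Z}_2}_*\cong\bigoplus_{k'}\Omega^O_*(BO(k')),
\end{align}
which is free over $\Omega^O_*$ on the classes $\xi_{u_1\dots u_{k'}}$. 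In these coordinates $\mu^{\mathbb{Z}_2}$ is the direct sum of the maps $\mu^k:\Omega^U_{2*}(BU(k))\to\Omega^O_*(BO(k))$. Each $\mu^k$ is $\mu$-semilinear: it sends $[M_0]\cdot\alpha_{s_1\dots s_k}$ to $\mu([M_0])\cdot\xi_{s_1\dots s_k}$. This uses that the real points of $\gamma_s\to\mathbb{P}^s_\mathbb{C}$ are $\eta_s\to\mathbb{P}^s_\mathbb{R}$, so the real points of $\mathbb{P}_\mathbb{C}(\gamma_{s_1}\times\cdots\times\gamma_{s_k}\oplus\mathbb{C})$ are $\mathbb{P}_\mathbb{R}(\eta_{s_1}\times\cdots\times\eta_{s_k}\oplus\mathbb{R})$, together with multiplicativity in the trivial factor $M_0$.

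Next I compute the kernel componentwise. The basis $\{\alpha_{s}\}$ maps bijectively onto the basis $\{\xi_{u}\}$, since both are indexed by nondecreasing tuples of the same length. Consequently an element $\sum_s a_s\alpha_s$ lies in $\ker\mu^k$ if and only if $\mu(a_s)=0$ for every $s$. By Lem. \ref{lemma_kernel_mu}, this holds exactly when each $a_s$ lies in
\begin{align}
2\Omega^U_{2*}+\Omega^U_{2*}\langle[H_\mathbb{C}(2^{s-1},2^{s-1})]\rangle .
\end{align}
Summing over $k$ and over the basis, the kernel equals $2{\Omega}^{U,\mathbb{Z}_2}_{2*}+\langle [H_\mathbb{C}(2^{s-1},2^{s-1})]\rangle\cdot{\Omega}^{U,\mathbb{Z}_2}_{2*}$. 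Here the hypersurfaces carry trivial action, so they act through the $\Omega^U_*$-module structure, which coincides with multiplication in the ring ${\Omega}^{U,\mathbb{Z}_2}_{2*}$. This gives one inclusion. The reverse inclusion is immediate: $\mu^{\mathbb{Z}_2}$ is a ring (or at least module) homomorphism, it kills $2$ because the target is a $\mathbb{Z}_2$-vector space, and it kills $H_\mathbb{C}(2^{s-1},2^{s-1})$ by Lem. \ref{lemma_kernel_mu}.

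The main obstacle is justifying that $\mu^{\mathbb{Z}_2}$, defined on generators, really is the direct sum of the $\mu^k$ with the stated semilinearity, that is, that the two decompositions correspond under taking real points. I would verify this by noting that the fixed points of $\mathbb{P}_\mathbb{R}(\eta\oplus\mathbb{R})$ are $\prod\mathbb{P}^{s_i}_\mathbb{R}$ plus the term $\mathbb{P}_\mathbb{R}(\eta)$. That extra term maps to the summand with trivial normal bundle of rank one. It appears identically on the complex side as $\mathbb{P}_\mathbb{C}(\gamma)$ with normal $\mathbb{C}$, so the basis change is triangular in the same way on both sides. In fact the splittings of Lem. \ref{lemma_projective_generatosrs_semi-free} and Lem. \ref{lemma_projective_generatosrs_unoriented} let one use the projective spaces directly as bases, which avoids this bookkeeping.
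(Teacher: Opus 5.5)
Your route is the one the paper has in mind (its entire justification is that $\mu^{\mathbb{Z}_2}$ is compatible with the maps $\mu^k\colon\Omega^U_{2*}(BU(k))\to\Omega^O_*(BO(k))$). It has a genuine gap, which the paper shares: the identifications ${\Omega}^{U,\mathbb{Z}_2}_{2*}\cong\bigoplus_k\Omega^U_{2*}(BU(k))$ and $\Omega^{O,\mathbb{Z}_2}_*\cong\bigoplus_{k'}\Omega^O_*(BO(k'))$ are false.

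These sums are the \emph{relative} groups $\{\mathcal{A},\{1\}\}$ of fixed-point data, not the absolute bordism groups. Your projective-bundle maps $r^U$, $r^O$ are therefore not isomorphisms.
\begin{itemize}
\item In the unoriented case, $r^O$ is onto but has a nonzero kernel coming from the section $\Omega^O_{*-1}(B\mathbb{Z}_2)\to\bigoplus_{k'}\Omega^O_*(BO(k'))$. Already $\Omega^{O,\mathbb{Z}_2}_1=0$, while $\Omega^O_0(BO(1))=\mathbb{Z}_2$. Concretely, $r^O(\xi_0)=[\mathbb{P}_\mathbb{R}(\eta_0\oplus\mathbb{R})]$ is a circle with a reflection, which bounds a disk with a reflection.
\item In the unitary case, $r^U$ is not surjective. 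A free orbit $\mathbb{Z}_2\times M_0$ has empty fixed set but forgets to $2[M_0]\neq 0$.
\item Nor is $r^U$ injective: $\mathbb{P}_\mathbb{C}(L\oplus\mathbb{C})$ and $\mathbb{P}_\mathbb{C}(L^{*}\oplus\mathbb{C})$ are equivariantly isomorphic.
\end{itemize}
Lemmas \ref{lemma_projective_generatosrs_semi-free} and \ref{lemma_projective_generatosrs_unoriented} only give spanning sets. The splitting $\overline{\Omega}^{U,T}_{2*}\{\mathcal{A},\{1\}\}\cong\overline{\Omega}^{U,T}_{2*}\oplus\Omega^U_{2*-2}(BU(1))$ shows exactly that the fixed-point group is strictly larger than the absolute one.

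So your componentwise step fails. The claim that $\sum_s\mu(a_s)\xi_s=0$ forces every $\mu(a_s)=0$ would need the $\xi_s$ to be independent in $\Omega^{O,\mathbb{Z}_2}_*$, and they are not. The argument only gives $\ker\mu^{\mathbb{Z}_2}\supseteq r^U(\ker\bigoplus_k\mu^k)$. The true kernel also contains $r^U$ of everything that $\bigoplus_k\mu^k$ sends into $\ker r^O$, as well as the free orbits. This is not the ``triangular change of basis'' issue you flag at the end; $r^O$ has an actual kernel.

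The gap cannot be closed, because the displayed equality is false. Let $Q=\mathbb{P}_\mathbb{C}(\gamma_0\oplus\mathbb{C})$, i.e.\ $\mathbb{P}^1_\mathbb{C}$ with the involution $[v:z]\mapsto[-v:z]$. By the definition of $\mu^{\mathbb{Z}_2}$, $\mu^{\mathbb{Z}_2}[Q]=[\mathbb{P}_\mathbb{R}(\eta_0\oplus\mathbb{R})]=0$.

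In real dimension $2$ the right-hand side is $2\Omega^{U,\mathbb{Z}_2}_2+\Omega^{U,\mathbb{Z}_2}_0\cdot[H_\mathbb{C}(1,1)]$. To see $[Q]$ is not in it, use the signed number of isolated fixed points:
\begin{itemize}
\item It is a bordism invariant on $\Omega^{U,\mathbb{Z}_2}_2$. Fixed components have even codimension (the $(-1)$-eigenbundle is complex), so the $1$-dimensional fixed components of a $3$-dimensional bordism are oriented arcs and circles.
\item It is always even, by Riemann--Hurwitz, or because each isolated fixed point contributes the generator $[S(\mathbb{C}_-)]$ of $\Omega^U_1(B\mathbb{Z}_2)\cong\mathbb{Z}_2$ to $\partial$ of the fixed-point data, which must vanish.
\item Hence it is divisible by $4$ on $2\Omega^{U,\mathbb{Z}_2}_2$.
\item It vanishes on $\Omega^{U,\mathbb{Z}_2}_0\cdot[H_\mathbb{C}(1,1)]$, whose elements are combinations of $H_\mathbb{C}(1,1)\cong\mathbb{P}^1_\mathbb{C}$ with trivial action and the free manifold $\mathbb{Z}_2\times H_\mathbb{C}(1,1)$.
\end{itemize}
But $Q$ has exactly two isolated fixed points, both positive. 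So $[Q]\in\ker\mu^{\mathbb{Z}_2}$ does not lie in the proposed ideal.

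Degree $0$ fails as well. With the paper's tangential convention, $\Omega^{U,\mathbb{Z}_2}_0\cong\mathbb{Z}^2$ on a fixed point and a free orbit, since $1$-dimensional bordisms admit no isolated fixed points. Any surjection onto $\Omega^{O,\mathbb{Z}_2}_0\cong\mathbb{Z}_2$ therefore has kernel of index $2$, whereas $2\Omega^{U,\mathbb{Z}_2}_0$ has index $4$.

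Your inclusion of the right-hand side into the kernel is fine. A correct description of the kernel must also account for $\ker r^O$ (the section from free $\mathbb{Z}_2$-bordism) and for the free orbits.
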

This generalizes the calculation of the kernel of the Milnor map presented in Lem. \ref{lemma_kernel_mu}.
Here, it is worth noting that the Milnor hypersurfaces $H_{\mathbb{C}}(2^{s-1},2^{s-1})$ are taken as unitary manifolds with trivial $\mathbb{Z}_2$-action.

\section{Actions with isolated fixed points }


As seen, a key point for the existence of an equivariant Milnor map from 
$\Omega_{2*}^{U, T^k}$ to   $\Omega_*^{O, \mathbb{Z}_2^k}$ depends upon whether there is an involution on a representative of each class in $\Omega_{2*}^{U, T^k}$ such that its fixed point set can admit a $\mathbb{Z}_2^k$-action, representing a class in $\Omega_*^{O, \mathbb{Z}_2^k}$. 

The seminal work of Davis and Januszkiewicz \cite{Davis-Januszkiewicz} 
provides the possibility for further work. Davis and Januszkiewicz introduced and studied two kinds of equivariant manifolds, quasitoric manifolds and small covers, both of which are topological versions of toric varieties. These are characterized as follows.

Let $d = 1$ if $\mathbb{F} = \mathbb{R}$, and $d = 2$ if $\mathbb{F} = \mathbb{C}$.
A $G_{\mathbb{F}}^n$-manifold $M^{dn}$ over a simple $n$-polytope $P^n$ is a smooth closed manifold (which is required to be oriented if $d = 2$) with a locally standard $G_{\mathbb{F}}^n$-action such that its orbit space is homeomorphic to $P^n$. Here, $G_{\mathbb{F}}^n$ is 
$\mathbb{Z}_2^n$ if $\mathbb{F} = \mathbb{R}$, and $T^n$ if $\mathbb{F} = \mathbb{C}$, and 
a locally standard action means that the $G_{\mathbb{F}}^n$-action is locally isomorphic to a standard faithful representation of $G_{\mathbb{F}}^n$ on $\mathbb{F}^n$.
Such a $G_{\mathbb{F}}^n$-manifold is called a \emph{small cover} if $\mathbb{F} = \mathbb{R}$ and a \emph{quasitoric manifold} if $\mathbb{F} = \mathbb{C}$. It was shown in \cite{Buchstaber-Panov-Ray-polytopes} that every quasitoric manifold always admits 
a unitary structure, which  depends on a choice of its omniorientations.

A $G_{\mathbb{F}}^n$-manifold $\pi: M^{dn} \to P^n$ determines a \emph{characteristic function} $\lambda_{\mathbb{F}}: \mathcal{F}(P^n) \to  \text{Hom}(G_{\mathbb{F}}, G_{\mathbb{F}}^n)$, satisfying that for each vertex $v = F_{i_1} \cap \dots \cap F_{i_n}$ of $P^n$, the elements $\lambda_{\mathbb{F}}(F_{i_1}), \dots, \lambda_{\mathbb{F}}(F_{i_n})$ form a basis of $\text{Hom}(G_{\mathbb{F}}, G_{\mathbb{F}}^n)$, where  $\mathcal{F}(P^n) = \{F_1, \dots, F_m\}$ is the set of all facets of $P^n$.
Note that $\text{Hom}(G_{\mathbb{F}}, G_{\mathbb{F}}^n)$ is isomorphic to $\mathbb{Z}_2^n$ if $\mathbb{F} = \mathbb{R}$ and $\mathbb{Z}^n$ if $\mathbb{F} = \mathbb{C}$.

Davis and Januszkiewicz \cite{Davis-Januszkiewicz} tell us that one can use the principal $G_{\mathbb{F}}^n$-bundle 
$P^n\times G_{\mathbb{F}}^n$ over $P^n$
to reconstruct $M^{dn}$ via $\lambda_{\mathbb{F}}$. First $\lambda_{\mathbb{F}}$ gives the following  equivalence relation
$\sim_{\lambda_\mathbb{F}}$ on $P^n\times G_{\mathbb{F}}^n$
\begin{equation}\label{equiv}
(x, g)\sim_{\lambda_{\mathbb{F}}} (y, h)\Longleftrightarrow \begin{cases} x=y, g=h & \text{ if } x\in \text{\rm int}(P^n)\\
x=y, g^{-1}h\in G_F &\text{ if } x\in \text{\rm int}F\subset
\partial P^n
\end{cases}\end{equation}
where $G_F$ is explained as follows:  for each point $x\in
\partial P^n$, there exists a unique face $F$ of $P^n$ such that $x$
is in its relative interior. If $\dim F=k$, then there are $n-k$
facets, say $F_{i_1}, ..., F_{i_{n-k}}$, such that
$F=F_{i_1}\cap\cdots \cap F_{i_{n-k}}$, and furthermore,
$\lambda_{\mathbb{F}}(F_{i_1}), ..., \lambda_{\mathbb{F}}(F_{i_{n-k}})$ determine a
subgroup of rank $n-k$ in $G_{\mathbb{F}}^n$, denoted by $G_F$. Then the reconstruction of $M^{dn}$ is exactly
the quotient space $P^n\times G_{\mathbb{F}}^n/\sim_{\lambda_{\mathbb{F}}}$,  denoted
by $M(P^n, \lambda_{\mathbb{F}})$. 

It is clear that the complex conjugation on $T^n\subset \mathbb{C}^n$ induces an involution $\tau$
on $M(P^n, \lambda_\mathbb{C})$ such that the fixed point set 
$M(P^n, \lambda_\mathbb{C})^\tau$
is just the small cover $M(P^n, \lambda_\mathbb{R})$, where $\lambda_\mathbb{R}$ is the mod 2 reduction of
$\lambda_\mathbb{C}$. On the other hand, a natural question arises: {\em can every small cover be obtained in such a way?}
This is equivalent to the following {\em lifting problem}: Suppose that there exists a characteristic function 
$\lambda_{\mathbb{R}}: \mathcal{F}(P^n) \to  \mathbb{Z}_2^n$. Does there exist a characteristic function 
$\lambda_{\mathbb{C}}: \mathcal{F}(P^n) \to  \mathbb{Z}^n$ such that the following diagram commutes?
\begin{equation*} \xymatrix{
& \mathbb{Z}^n \ar[d]^{\mod 2}\\
\mathcal{F}(P^n) \ar[ur]^{\lambda_{\mathbb{C}}}\ar[r]_{\lambda_{\mathbb{R}}} & \mathbb{Z}_2^n.
}\end{equation*}
It is worth pointing out that
some related work has been carried out in the papers \cite{Qifan, Choi-Park, Sun, Choi-Yang-Vallee},  among others. 

\begin{definition}
Consider the subgroup 
$\widehat{\Omega}_{2n}^{U, T^n}$ (resp. $\widehat{\Omega}_{n}^{O, \mathbb{Z}_2^n}$) of $\Omega_{2n}^{U, T^n}$ (resp. $\Omega_{n}^{O, \mathbb{Z}_2^n}$), where 
$\widehat{\Omega}_{2n}^{U, T^n}$ is generated by all equivariant unitary bordism classes of $2n$-dimensional unitary  manifolds with  fully effective  $T^n$-actions, and $\widehat{\Omega}_{n}^{O, \mathbb{Z}_2^n}$ is generated by equivariant unoriented bordism classes of all $n$-dimensional  manifolds with fully effective  $\mathbb{Z}_2^n$-actions.
Here, an action of $T^n$ or $\mathbb{Z}_2^n$ on a manifold is fully effective if $T^n$ or $\mathbb{Z}_2^n$ acts effectively on each connected component, and the effectiveness of an action means that no non-trivial element of $T^n$ or $\mathbb{Z}_2^n$ fixes every point in the manifold. In particular, the effectiveness implies that the fixed point set is either empty or consists of isolated points in these two cases.
\end{definition}


It was shown by the third author and collaborators \cite{LuTan, LuChenTan}   the following results:
\begin{lemma}\label{representative}\
\begin{itemize}
 \item[(1)] 
 Each class in $\widehat{\Omega}_{n}^{O, \mathbb{Z}_2^n}$ contains a small cover as its representative.
 \item[(2)] 
 For $n>1$, each class of $\widehat{\Omega}_{2n}^{U, T^n}$ contains an omnioriented quasitoric manifold as its representative.
\end{itemize}
\end{lemma}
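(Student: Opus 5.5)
The plan is the GKM-type fixed-point approach, carried out separately for the two parts. In both cases a class in $\widehat{\Omega}^{O,\mathbb{Z}_2^n}_n$ or $\widehat{\Omega}^{U,T^n}_{2n}$ is determined by its fixed-point data. By fully effectiveness in dimension $n$ (resp.\ $2n$), the fixed points are isolated. The tangent representation at each fixed point is a faithful $n$-dimensional real (resp.\ complex) representation, so its weights form a basis of $\mathrm{Hom}(\mathbb{Z}_2^n,\mathbb{Z}_2)$ (resp.\ $\mathbb{Z}^n$). By the tom Dieck--Stong localization results, and the analogous statement for unitary torus manifolds in the spirit of Lem.~\ref{lemma_projective_generatosrs_semi-free}, the bordism class equals the class of this multiset of representations modulo the relations coming from boundaries.

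\textbf{Step 1.} We describe the image of the fixed-point map as the set of multisets of faithful representations satisfying a tom Dieck type integrality condition. In the real case this means the associated element of $\mathbb{Z}_2[\rho]$ has all restricted "Euler sums" integral. In the unitary case we use the analogous localization formula in $K$-theory or in Chern numbers.

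\textbf{Step 2.} We recast these conditions combinatorially: the fixed-point data form a regular colored graph, the GKM graph of the manifold. Every edge is labelled by a weight shared by two fixed points, and for each $k$-subset of colors the connected faces satisfy a closing condition.

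\textbf{Step 3.} This is the heart of the argument. From such an abstract colored graph we realize a simple polytope $P^n$ whose vertex-facet incidence graph carries these colors. The characteristic function $\lambda$ is read off from the edge labels, and we apply the Davis--Januszkiewicz reconstruction $M(P^n,\lambda)$. In the real case we first cancel pairs of fixed points with identical tangent representation, which are mod $2$ bordant to zero. We then use that the remaining data can be built from connected sums of small covers over products of simplices, using equivariant connected sum at fixed points, which preserves bordism class.

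\textbf{Step 4 (unitary case).} We do the same with omniorientations and signs, tracking the sign $\pm$ at each fixed point. The restriction $n>1$ enters because we need equivariant connected sums along fixed points with opposite signs to cancel unwanted points. For $n=1$ the class $[S^2]$ with its two orientations shows this fails, since one cannot always produce a quasitoric representative of the zero-sign-sum classes, and we allow $\mathbb{P}_\mathbb{C}^1$-type pieces only when $n\ge 2$.

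The main obstacle is Step 3/4: showing the equivariant connected sum of quasitoric manifolds over polytopes is again quasitoric over the connected sum of polytopes with compatible characteristic functions. We must also check that enough elementary pieces exist to cancel arbitrary admissible fixed-point data. We would first try generating all admissible data by products and blow-ups at vertices of $\mathbb{P}^n$-type quasitoric manifolds, using linear-algebra moves on the weight bases, which are realized by $GL_n(\mathbb{Z})$ automorphisms of $T^n$.
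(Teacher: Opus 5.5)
The paper gives no proof of its own for this lemma; it quotes \cite{LuTan, LuChenTan}. Your outline follows the same general route as those works: the fixed data determine the class, the admissible data are generated by small covers (resp.\ omnioriented quasitoric manifolds) over products of simplices, and a sum of these is merged into one manifold by equivariant connected sums. But as written there is a genuine gap, because your proposal omits exactly the step that carries the content.

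The direct route in the first half of Step 3 does not work. A multiset of tangent representations does not determine a colored graph. The graph of a given representative depends on that representative. And that graph need not be the $1$-skeleton of a simple polytope: $S^n$ with $\mathbb{Z}_2^n$ acting by reflections in $x_1,\dots,x_n$ has two fixed points joined by $n$ edges. So you must change the representative. For that you need the statement that every admissible fixed datum is a $\mathbb{Z}_2$- (resp.\ $\mathbb{Z}$-) combination of fixed data of small covers (resp.\ quasitoric manifolds) over products of simplices. That statement is Lem.~\ref{generation}, the main theorem of \cite{LuTan, LuChenTan}. You only announce it (``we must also check that enough elementary pieces exist'', ``we would first try'') and give no argument.

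You also misplace the remaining obstacle. The following facts are standard: the vertex connected sum of two small covers, or of two omnioriented quasitoric manifolds with opposite signs at the chosen vertices, is again one over $P_1\# P_2$; and it is equivariantly bordant to the disjoint union (\cite{Davis-Januszkiewicz, Buchstaber-Panov-Ray-polytopes} together with injectivity of the fixed-data map). What does need an argument is matching. Two arbitrary summands need not have fixed points with isomorphic tangent representations (and opposite signs). Reparametrizing by an automorphism of $\mathbb{Z}_2^n$ or $T^n$ changes the equivariant class, so it cannot be used to force a match. You need null-bordant bridging pieces. In the real case, for example, take the small cover over $I^n$ with $\lambda(F_i^0)=\lambda(F_i^1)=e_i$, except $\lambda(F_1^1)=e_1+e_2$. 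It has $2^{n-1}$ fixed points for each of two bases that differ by an elementary move, so it bounds, and chaining such pieces connects any two vertex types.

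Finally, your explanation of the hypothesis $n>1$ is inaccurate. The zero class is represented by $S^2$ over $\Delta^1$ with $\lambda(F_0)=\lambda(F_1)$. What fails for $n=1$ is this: a quasitoric $2$-manifold is $S^2$ with exactly two fixed points, while $k[\mathbb{P}^1_\mathbb{C}]$ has fixed data $k[t]+k[t^{-1}]$ and so needs at least $2|k|$ of them. For $n\ge 2$, vertex connected sums of simple polytopes remain simple and can absorb arbitrarily many fixed points.
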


Note that for $n=1$, $\widehat{\Omega}_{2}^{U, T}\cong \mathbb{Z}$ is exactly generated by the class of $\mathbb{C}P^1$ with the standard $T$-action, while $\widehat{\Omega}_1^{O,\mathbb{Z}_2}\cong \{0\}$.

Furthermore, \cite{LuChenTan} demonstrates that the groups $\widehat{\Omega}_{n}^{O, \mathbb{Z}_2^n}$ and $\widehat{\Omega}_{2n}^{U, T^n}$ are generated by certain special types of small covers and omnioriented quasitoric manifolds, respectively.

\begin{lemma}[\cite{LuChenTan}]\label{generation}\
\begin{itemize}
 \item[(1)] As a finite-dimensional vector space over $\mathbb{Z}_2$, $\widehat{\Omega}_{n}^{O, \mathbb{Z}_2^n}$ has a basis consisting of  certain equivariant unoriented bordism classes of $n$-dimensional generalized real Bott manifolds (i.e., small covers over products of some simplices). 
 \item[(2)] As a free abelian $\mathbb{Z}$-module, $\widehat{\Omega}_{2n}^{U, T^n}$ has a basis given by certain equivariant unitary bordism classes of $2n$-dimensional omnioriented quasitoric manifolds over products of some simplices.
\end{itemize}
\end{lemma}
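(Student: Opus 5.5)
We would argue through fixed point data, treating the real case (1) first and then adapting it to the unitary case (2). Let $M^n$ carry a fully effective $\mathbb{Z}_2^n$-action. Its fixed points are isolated, and at each fixed point $p$ the tangent representation $T_pM$ is a faithful $n$-dimensional real representation of $\mathbb{Z}_2^n$. Such a representation is the same thing as a basis $\{\rho_1,\dots,\rho_n\}$ of $\mathrm{Hom}(\mathbb{Z}_2^n,\mathbb{Z}_2)$. By Stong's theorem, an equivariant class vanishes exactly when its fixed data bounds. We would therefore identify $\widehat{\Omega}^{O,\mathbb{Z}_2^n}_n$ with its image under
\begin{align}
\phi: [M] \mapsto \sum_{p \in M^{\mathbb{Z}_2^n}} [T_pM] \in \mathcal{R}_n,
\end{align}
where $\mathcal{R}_n$ is the $\mathbb{Z}_2$-vector space with basis the unordered bases of $\mathrm{Hom}(\mathbb{Z}_2^n,\mathbb{Z}_2)$. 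This map is injective, and $\mathcal{R}_n$ is finite dimensional, so $\widehat{\Omega}^{O,\mathbb{Z}_2^n}_n$ is finite dimensional as well.

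Next we would describe the image of $\phi$ by the tom Dieck / GKM-type localization condition. For every symmetric polynomial $f$ in $n$ variables, the sum
\begin{align}
\sum_p \frac{f(\rho_{p,1},\dots,\rho_{p,n})}{\rho_{p,1}\cdots\rho_{p,n}}
\end{align}
must be a polynomial in $H^*(B\mathbb{Z}_2^n;\mathbb{Z}_2)$. By Lem. \ref{representative}(1), every element of the image is already realized by a small cover, so this gives a purely combinatorial target space. We would then take the generalized real Bott manifolds $M(\Delta^{n_1}\times\cdots\times\Delta^{n_r},\lambda_\mathbb{R})$. Their fixed points are indexed by vertex tuples, and their tangent data can be read off directly from $\lambda_\mathbb{R}$.

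The key step is a spanning argument. We would fix a total order on $\mathcal{R}_n$, for instance via a lexicographic order on the matrices representing the bases, up to $GL_n(\mathbb{Z}_2)$-normalization. For each element $x$ of the image, we would then construct a Bott manifold whose fixed data has the leading term of $x$ as its own leading term, and induct downward, subtracting and lowering the leading term at each stage. To produce the required Bott manifold, we would use equivariant connected sums at fixed points and the ``cutting'' of a small cover along a facet. These operations let us rewrite a general small cover over $P^n$ as a bordism sum of small covers over products of simplices. A triangularity argument then extracts a basis from this spanning set.

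For (2), we would run the same scheme over $\mathbb{Z}$. Fixed data now consists of signed bases of $\mathbb{Z}^n$, with signs coming from the omniorientation. We would use Lem. \ref{representative}(2), the integrality theorem for unitary $T^n$-bordism, and quasitoric manifolds over products of simplices. Freeness follows because $\phi$ embeds the group into a free abelian group.

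The main obstacle is the spanning step: showing that products of simplices already suffice, and, in the integral case, that the triangular change of basis is unimodular rather than merely of full rank. Our first attempt at this would be to perform the cutting/gluing reduction inductively on the number of facets of $P^n$.
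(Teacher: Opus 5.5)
\textbf{There is a genuine gap: the spanning step, which is the whole content of the lemma, is missing.} The paper does not prove this lemma; it imports it from \cite{LuChenTan}, and the generation and basis claims are the substance of that work.

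Your preliminary steps are sound. Stong's injectivity of the fixed-point homomorphism embeds $\widehat{\Omega}^{O,\mathbb{Z}_2^n}_n$ into the finite-dimensional space $\mathcal{R}_n$. In the unitary case, $\widehat{\Omega}^{U,T^n}_{2n}$ is free abelian because it is a subgroup of a free abelian group. But these facts say nothing about \emph{which} manifolds generate.

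In part (1), the spanning step is only asserted. You never produce, for a given leading term $\tau$, a generalized real Bott manifold whose fixed data contains $\tau$ and otherwise only strictly smaller terms. There is no evident reason such a manifold exists for any fixed total order. The fixed data of a small cover over $\Delta^{n_1}\times\cdots\times\Delta^{n_r}$ consists of $\prod_i (n_i+1)$ representations rigidly tied together by $\lambda_\mathbb{R}$: the representations at the two ends of an edge agree modulo the edge character. So you cannot independently control the non-leading terms.

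The ``cutting along a facet'' operation is neither defined nor shown to preserve the equivariant bordism class. If it means slicing $P^n$ by a hyperplane, the new facet needs a single colour independent of the $n-1$ colours along every cut edge. Over $\mathbb{Z}_2$ such a colour need not exist: three hyperplanes of $\mathbb{Z}_2^n$ through a common codimension-two subspace cover everything. Even granting cuts and connected sums, the claim that they reduce an arbitrary small cover over $P^n$ to small covers over products of simplices is just a restatement of part (1).

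A smaller slip: ordering bases ``up to $GL_n(\mathbb{Z}_2)$-normalization'' is ill-posed, since $GL_n(\mathbb{Z}_2)$ acts transitively on bases and would collapse all of $\mathcal{R}_n$. You presumably mean reordering within a basis, i.e.\ by $\mathfrak{S}_n$.

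For part (2) the gap is larger.
\begin{itemize}
\item Over $\mathbb{Z}$ a spanning set need not contain a basis (think of $\{2,3\}\subset\mathbb{Z}$). To obtain the asserted basis of quasitoric classes over products of simplices, the leading coefficients must be $\pm 1$. You flag this but give no argument.
\item For $n\ge 2$, $\mathcal{R}_n$ has infinitely many basis elements, namely the signed unordered bases of $\mathbb{Z}^n$. A lexicographic order on integer matrices has infinite descending chains, so your downward induction on the leading term need not terminate. You would need a well-founded order, for instance one controlled by the size of the entries, together with control over the lower-order fixed points of the quasitoric manifolds you subtract. That is again the missing construction.
\item Lem.~\ref{representative}(2) only covers $n>1$, so $n=1$ must be handled separately. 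This case is trivial, via $\mathbb{C}P^1$.
\end{itemize}

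As written, the proposal establishes finite-dimensionality in (1) and freeness in (2), but neither the generation statement nor the existence of a basis made of (generalized real) Bott or quasitoric manifolds over products of simplices.
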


Now with these preliminaries, we can define a group homomorphism 
\begin{align*} 
    \widehat{\mu}^{T^n} : \widehat{\Omega}_{2n}^{U, T^n} \longrightarrow 
  \widehat{\Omega}_{n}^{O, \mathbb{Z}_2^n}
\end{align*}
as follows. Choose a basis $S$ of $\widehat{\Omega}_{2n}^{U, T^n}$ as described in Lem. \ref{generation} (2). For any $[M(P^n, \lambda_\mathbb{C})]\in S$, set $\widehat{\mu}^{T^n}([M(P^n, \lambda_\mathbb{C})]) = [M(P^n,\lambda_\mathbb{R})]_2$. From the corresponding fixed point data, which is a formal sum of all tangent representations at fixed points \cite{Stong_1970, LuTan, Darby}, the image of $[M(P^n, \lambda_\mathbb{C})]$ under the map $\widehat{\mu}^{T^n}$ is independent of the choice of omnioriented quasitoric manifolds. 
Then extend $\widehat{\mu}^{T^n}$ linearly to the whole group $\widehat{\Omega}_{2n}^{U, T^n}$.

For an arbitrary element $x = [M(P^n, \lambda_\mathbb{C})]$ in $\widehat{\Omega}_{2n}^{U, T^n}$, write $x = \sum a_i s_i$ with $s_i\in S$ and $a_i\in \mathbb{Z}$. Then $\widehat{\mu}^{T^n}(x) = \sum a_i \widehat{\mu}^{T^n}(s_i)$. 
As can be seen from the fixed point data, this image is exactly $[M(P^n, \lambda_\mathbb{R})]_2$.

Obviously, if the lifting problem holds, then $\widehat{\mu}^{T^n}$ would be surjective. However, the problem remains open except for some special cases. For example,  $|\mathcal{F}(P^n)| - n = 3, 4$ \cite{Choi-Park, Choi-Yang-Vallee}. Nevertheless, we can still prove that $\widehat{\mu}^{T^n}$ is surjective for the following reason.
If $P^n$ is a product of some simplices, it is well-known that the lifting problem always holds (see Example 2.7 in \cite{Choi-Yang-Vallee}). Therefore, together with Lemma~\ref{generation} (1), we have the following result:

\begin{theorem}\label{Milnor_map_fully_effective}
  $ \widehat{\mu}^{T^n} : \widehat{\Omega}_{2n}^{U, T^n} \longrightarrow 
  \widehat{\Omega}_{n}^{O, \mathbb{Z}_2^n}$  is a surjective homomorphism.  
\end{theorem}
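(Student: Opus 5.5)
The plan is to prove surjectivity on a basis. Well-definedness and linearity of $\widehat{\mu}^{T^n}$ were already discussed before the statement, so only surjectivity remains. By Lem.~\ref{generation}~(1), $\widehat{\Omega}_{n}^{O,\mathbb{Z}_2^n}$ is a finite-dimensional $\mathbb{Z}_2$-vector space with a basis of classes $[M(P^n,\lambda_\mathbb{R})]_2$. Here each $P^n=\Delta^{n_1}\times\cdots\times\Delta^{n_r}$ is a product of simplices (generalized real Bott manifolds). Since the image of a group homomorphism is a subgroup, it suffices to show that each such basis element lies in the image.

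\textbf{Step 1 (lifting).} Fix a basis element $[M(P^n,\lambda_\mathbb{R})]_2$ with $P^n$ a product of simplices. I would invoke the solvability of the lifting problem for products of simplices (Example 2.7 in \cite{Choi-Yang-Vallee}) to obtain a characteristic function $\lambda_\mathbb{C}:\mathcal{F}(P^n)\to\mathbb{Z}^n$ whose mod $2$ reduction is $\lambda_\mathbb{R}$.

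The idea behind this lifting is as follows. Facets of $\prod\Delta^{n_i}$ come in blocks, one block of $n_i+1$ facets for each simplex factor. After a change of basis of $\mathbb{Z}_2^n$, $\lambda_\mathbb{R}$ can be written as a block upper-triangular matrix whose diagonal blocks are the standard vectors $e_1,\dots,e_{n_i}$ together with $e_1+\cdots+e_{n_i}$. One lifts every entry to $0$ or $1$ in $\mathbb{Z}$, and the diagonal blocks to the standard simplex characteristic function with last vector $-(e_1+\cdots+e_{n_i})$. At every vertex the resulting $n\times n$ minors are then block triangular with unimodular diagonal blocks, so their determinants are $\pm1$ and the non-singularity condition holds over $\mathbb{Z}$.

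\textbf{Step 2 (quasitoric representative).} The lift gives a quasitoric manifold $M(P^n,\lambda_\mathbb{C})$. Choose any omniorientation, which gives it a $T^n$-equivariant unitary structure by \cite{Buchstaber-Panov-Ray-polytopes}. The $T^n$-action is locally standard, hence effective on the (connected) manifold, so $[M(P^n,\lambda_\mathbb{C})]\in\widehat{\Omega}_{2n}^{U,T^n}$.

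\textbf{Step 3 (computing the image).} As noted before the statement, for an arbitrary omnioriented quasitoric $x=[M(P^n,\lambda_\mathbb{C})]$ one has $\widehat{\mu}^{T^n}(x)=[M(P^n,\lambda_\mathbb{R}')]_2$, where $\lambda_\mathbb{R}'$ is the mod $2$ reduction of $\lambda_\mathbb{C}$ and the image is the fixed point set of the conjugation $\tau$. This is checked by expanding $x$ in the basis $S$ and comparing fixed point data: the tangent representations at fixed points of $M(P^n,\lambda_\mathbb{C})$ reduce mod $2$ to those of the small cover. Applying this with our lift, where $\lambda_\mathbb{R}'=\lambda_\mathbb{R}$, shows the basis element is in the image, and surjectivity follows. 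The case $n=1$ is trivial because the target vanishes.

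The main obstacle is Step 3: making rigorous that the value of $\widehat{\mu}^{T^n}$ on a non-basis quasitoric class equals the class of its real points. I would argue via the tom Dieck/Stong-type injectivity of the fixed point data map for $\widehat{\Omega}^{O,\mathbb{Z}_2^n}_n$. The mod $2$ reduction of $T^n$-fixed point data is additive and agrees with the real fixed data on each element of $S$, so by linearity it agrees on $x$ as well.
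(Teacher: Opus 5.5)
Your proposal is correct and follows the paper's own argument: take the generalized real Bott basis of Lem.~\ref{generation}~(1), lift each characteristic function using the solvability of the lifting problem over products of simplices, and identify $\widehat{\mu}^{T^n}$ of the resulting quasitoric class with its real points via mod $2$ fixed point data. Your block-triangular sketch and your injectivity-of-fixed-data argument make explicit what the paper only asserts. One small point in the sketch: the $\mathrm{GL}_n(\mathbb{Z}_2)$ change of basis must be undone by a lift in $\mathrm{GL}_n(\mathbb{Z})$. Such a lift exists because reduction mod $2$ is surjective, and it is needed because twisting the $\mathbb{Z}_2^n$-action by an automorphism changes the equivariant bordism class.
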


Theorem~\ref{Milnor_map_fully_effective} indicates that the lifting problem holds in the sense of equivariant bordism. This conclusion was already proved in Tan's thesis \cite[Thm. 5.1]{Tan_thesis}. We revisit it here in order to present another equivariant version of Milnor map and keep the exposition self-contained.

\section{Magnetic unitary bordism}

Almost complex manifolds and stably almost complex manifolds with compatible conjugation maps have been the subject of study in several authors \cite{Landweber_fixed_pont_free_conjugations,  Landweber_conjugation_MU, Stong_manifolds_with_conjugation, Stong_error_conjugation, Edelson_conjugations, Edelson_fixed_of_conjugations}. Here we propose a bordism theory which incorporates both, the stably unitary 
structure, and the compatible conjugation involution. This defines the bordism theory associated to magnetic equivariant vector bundles \cite{serrano2025magneticequivariantktheory}.

\subsection{Magnetic equivariant vector bundles}

A {\it magnetic group} is a group $G$ endowed with a $\mathbb{Z}_2$ grading defined by a surjective homomorphism $\phi : G \to \overline{\mathbb{Z}}_2$. Here  $\overline{\mathbb{Z}}_2$ will simply be the group ${\mathbb{Z}}_2$, the line over it has the purpose of distinguishing it from other $\mathbb{Z}_2$ groups that may appear in $G$.    A {\it magnetic representation} (also known as a corepresentation in Wigner's book \cite{Wigner_book}) is a complex vector space $V$ where the group $G$ acts such that the elements of $G_0 = \mathrm{ker}(\phi)$ act complex linearly and the elements of $G \backslash G_0$ act complex antilinearly. Morphisms of magnetic representations are defined by $G$-equivariant complex linear maps.

If $G$ is a compact Lie group, the irreducible magnetic representations
of $G$ can be classified by its algebra of $G$-equivariant endomorphisms. This can also be seen 
by the type of complex irreducible representations that appear once the magnetic representation is restricted to the subgroup $G_0$. If $V$ is a magnetic irreducible representation and $\mathrm{res}^G_{G_0}$ denotes the restriction to $G_0$, then Wigner proved:
\begin{align}
    \mathrm{End}_G(V) \cong\begin{cases}
        \mathbb{R} & \mathrm{res}^G_{G_0}V \in \mathrm{Irrep}(G_0)\\
         \mathbb{H} & \mathrm{res}^G_{G_0}V \cong W \oplus W \\
          \mathbb{C} & \mathrm{res}^G_{G_0}V \cong W \oplus \overline{W}
    \end{cases}
\end{align}
where $W \in \mathrm{Irrep}(G_0)$ and $\overline{W}$ is its conjugate representation (taking into account the complex conjugation and the conjugation action of $\overline{\mathbb{Z}}_2$ in $\mathrm{Irrep}(G_0)$).

If $X$ is a finite $G$-equivariant $CW$-complex, a magnetic $G$-equivariant vector bundle over $X$
is a complex vector bundle $E$ endowed with a compatible action of $G$ such that it is complex linear on the fibers for $G_0$ and complex antilinear for $G \backslash G_0$.

The Grothendieck group of isomorphism classes of magnetic $G$-equivariant vector bundles over $X$ defines the {\it magnetic $G$-equivariant K-theory} $\mathbf{K}_G(X)$ and the suspensions $$\widetilde{\mathbf{K}}_G^{-q}(X) \coloneqq \widetilde{\mathbf{K}}_G(\Sigma^qX)$$
and Bott periodicity ${\mathbf{K}}_G^{-q} \cong {\mathbf{K}}_G^{-q-8}$ permits to define a $G$-equivariant cohomology theory ${\mathbf{K}}_G^{*}$.

This K-theory comes with a natural forgetful map to complex $G_0$-equivariant K-theory $\mathbf{K}_G^*(X) \to KU^*_{G_0}(X)$, landing in the $\overline{\mathbb{Z}}_2$-invariant part and becoming a rational isomorphism \cite{Serrano_rationalmagnetic}:
\begin{align}
    \mathbf{K}_G^*(X) \otimes \mathbb{Q} \overset{\cong}{\to} KU^*_{G_0}(X)^{\overline{\mathbb{Z}}_2} \otimes \mathbb{Q}.
\end{align}

\subsection{Magnetic equivariant bordism}

The {\it magnetic $G$-equivariant bordism} is the bordism theory framed on stably magnetic $G$-equivariant bundles.
An $n$-dimensional manifold $M$ endowed with a $G$-action is said to be {\it  stably magnetic $G$-equivariant} if the stabilization of the tangent bundle can be endowed with the structure of a magnetic $G$-equivariant vector bundle. This is an isomorphism of real $G$-equivariant bundles
\begin{align}
    TM^n \oplus \theta \cong \xi^N_\mathbb{C}
\end{align}
where $\xi_\mathbb{C}$ is a magnetic $G$-equivariant vector bundle over $M$ of complex rank $N$, $\theta$ is a  trivial real vector bundle of rank $2N-n$ over $M$ endowed with an involution of real vector bundles (or a $\mathbb{Z}_2$-action) $\mu : \theta \to \theta$ compatible with the $G$-action on $M$, taken as a $G$-equivariant real vector bundle through the homomorphism $\phi$. This means that $G$ acts on $\theta \simeq M \times \mathbb{R}^{2N-m}$  by the equation 
\begin{align}
g(m,\lambda) = \begin{cases} (gm, \mu(\lambda)) &  \phi(g)=1\\ (gm,\lambda) & \phi(g)=0.
\end{cases}
\end{align}
Two stable magnetic $G$-equivariant structures $ TM_i \oplus \theta_i \cong (\xi_i)_\mathbb{C}$, $i =1,2$, are {\it stably equivalent} if there is a
$G$-equivariant diffeomorphism $f: M_1 \overset{\cong}{\to} M_2$
and an isomorphism of magnetic $G$-equivariant  bundles
\begin{align}
     TM_1 \oplus \theta_1 \oplus \mathbb{C}^{k_1} \cong (\xi_1)_\mathbb{C} \oplus \mathbb{C}^{k_1} \cong (\xi_2)_\mathbb{C} \oplus \mathbb{C}^{k_2} \cong  TM_2 \oplus \theta_2 \oplus \mathbb{C}^{k_2} 
\end{align}
for some $k_1$ and $k_2$,
compatible with the differential $df_*:TM_1 \to TM_2$. Here, the magnetic $G$-equivariant structure on the extra copies of $\mathbb{C}$ is given by the homomorphism $\phi : G \to \overline{\mathbb{Z}}_2$; namely complex conjugation via $\phi$.

In terms of framings it is equivalent to find a lift:
\begin{align}
    \xymatrix{
&& \mathbb{B}_GU(N) \ar[d] \ar[r] &\mathbb{B}_GU \ar[d] \\
M \ar[r]_h \ar@{.>}[rru]^\xi& B_GO(n) \ar[r]_{\times \theta} & B_GO(2N) \ar[r] &B_GO    
    }
\end{align}
where $B_GO(n)$ is the classifying space for $G$-equivariant real bundles, $h$ denotes the classifying map of the tangent bundle, and 
$\mathbb{B}_GU(N)$ is the classifying space of magnetic $G$-equivariant vector bundles. This last space can be taken to be the Grassmannian of $N$-dimensional complex planes in
a complete universe for magnetic $G$-equivariant representations. The spaces $\mathbb{B}_GU$ and $B_GO$ denote the limit of the stabilizations.

The manifold $M$ bounds in this context if there is a magnetic $G$-equivariant manifold $W$ such that $\partial W=M$. In other words, there exists a lift $W \to \mathbb{B}_GU$ of the tangent bundle $W \to B_GO$ such that the restriction map $\partial W\to \mathbb{B}_GU$ is $G$-homotopic to  the map $\xi$. Note that the restriction of a stably magnetic $G$-equivariant bundle on $W$ to its boundary $M$:
\begin{align}
    (TW \oplus \theta )|_M \cong TM \oplus \mathbb{R} \oplus \theta|_M
\end{align}
is also a stably magnetic $G$-equivariant bundle. Note that the normal bundle of $M$ in $W$ 
is trivial and the elements in $G \backslash G_0$ act trivially on the fibers. The only non-trivial action comes from the group $\overline{\mathbb{Z}}_2$ and its action can only be trivial or the sign representation.

\begin{definition}
    The {\bf magnetic $G$-equivariant bordism group} consists of equivalence classes of manifolds endowed with stably magnetic $G$-equivariant tangent bundle up to the bordism relation. We denote these groups by $\mathbf{\Omega}^G_*$. For a $G$-space $X$ the bordisms may be anchored by $G$-equivariant maps to $X$. Denote by $\mathbf{\Omega}^G_*(X)$ the associated bordism classes. 
\end{definition}

These bordism groups $\mathbf{\Omega}^G_*(X)$ define a $G$-equivariant homology theory following the general structure defined by Conner and Floyd \cite{Conner_Floyd_Differentiable}.

Note that the magnetic bordism groups come with natural forgetful maps:
\begin{align}
  \Omega^{O,G}_* \leftarrow  \mathbf{\Omega}^G_* \to \Omega^{U,G_0}_*
\end{align}
where the first forgets the unitary structure, and the second restricts the bordisms to a $G_0$-action. 

\subsection{Magnetic unitary bordism} We are particularly interested in the magnetic bordism groups $\mathbf{\Omega}^{\overline{\mathbb{Z}}_2}_*$ and $\mathbf{\Omega}^{T^k \rtimes \overline{\mathbb{Z}}_2}_*$. Here, the magnetic structure of the groups is given by the projection of the groups to $\overline{\mathbb{Z}}_2$, and the action of $\overline{\mathbb{Z}}_2$ on the torus $T^k$ is given by complex conjugation.

Before analyzing these bordism groups let us briefly mention some of the previous related results.  Landweber  was the first to study the $\mathbb{Z}_2$-equivariant homotopy of the unitary bordism spectrum $MU$ defining
the homotopy bordism groups \cite{Landweber_conjugation_MU}:
\begin{align}
MU_{p,q} = \pi_{p,q}(MU) = \lim_k \pi_{p+k,q+k}(MU(k)) =\lim_k [(B^{p+k,q+k}, \partial B^{p+k,q+k}), (MU(k),*)]_{\mathbb{Z}_2}
\end{align}
where $B^{p,q}$ is the unit   in $\mathbb{R}^{p,q}$, its $\mathbb{Z}_2$-structure maps $(x,y)$ to $(-x,y)$, $MU(k)$ is the Thom space of the canonical bundle over $BU(k)$ with the natural conjugation map, and $[-,-]_{\mathbb{Z}_2}$ denotes the homotopy classes of $\mathbb{Z}_2$-equivariant maps. There are canonical maps:
\begin{align}
    \Omega^{O}_q \leftarrow MU_{p,q} \to \Omega^U_{p+q}
\end{align}
where the first restricts to the $\mathbb{Z}_2$ fixed points, and the second forgets the $\mathbb{Z}_2$-action. Landweber further shows that the following diagram is commutative:
\begin{align}
\xymatrix{
    MU_{n,n} \ar@{->>}[r] \ar@{->>}[d] & \Omega^U_{2n} \ar@{->>}[ld]^\mu \\
    \Omega^O_n }
\end{align}
where both the horizontal and the vertical maps are surjective and $\mu$ denotes the Milnor map. Moreover, the horizontal map is a rational isomorphism and its kernel is 2-torsion.

Our interest lies in the geometric version of Landweber bordism groups. Allow us to make the following definition:

\begin{definition}
    Call  {\bf magnetic unitary bordism ring}  the magnetic $\overline{\mathbb{Z}}_2$-equivariant bordism ring $\mathbf{\Omega}^{\overline{\mathbb{Z}}_2}_*$.
\end{definition}

A magnetic unitary manifold consists of the following information:
\begin{itemize}
\item A manifold $M$ endowed with an involution $\tau : M \to M$.
    \item A stably almost complex structure $J : TM \oplus \theta \to TM \oplus \theta  $ where $\theta$ is a trivial real bundle over $M$.
    \item A lift of the involution
    $\tau_* : TM \oplus \theta \to TM \oplus \theta$ such that $\tau_* \circ J = - J \circ \tau_*$ and $\tau_*^2=1.$
\end{itemize}

Although the three maps $\tau$, $J$ and $\tau_*$ are essential, we will identify the magnetic unitary class of this manifold with the pair $(M, \tau_*)$; the stably almost complex structure will be clear from the context,

The additive inverse $-[(M, \tau_*)]$ of the bordism class $[(M, \tau_*)]$
consists of the same manifold $M$ with involution $\tau$, together  with the stably almost complex structure $J \oplus (-i) : TM \oplus \theta \oplus \mathbb{C} \to  TM \oplus \theta \oplus \mathbb{C}$, and involution
$\tau_* \oplus \mathbb{K}$. This follows from taking  the boundary of the interval $[-1,1]$ with constant stably almost complex structure $T[-1,1] \times \mathbb{R} \cong [-1,1]  \times \mathbb{C} $ and conjugation $\mathbb{K}$. The boundary consists of two points with opposite complex structures $\mathbb{C} \sqcup \overline{\mathbb{C}}=\{1\} \times \mathbb{C} \cup \{-1\} \times \overline{\mathbb{C}}$.

\begin{remark}
Stong in \cite{Stong_manifolds_with_conjugation} defined a bordism theory for stably 
magnetic $\overline{\mathbb{Z}}_2$-equivariant normal bundles where it 
was denoted $\Omega^{AR}_*$; here the AR is for Atiyah's real vector bundles.
These bordism groups are the same as the magnetic unitary bordism groups $\mathbf{\Omega}^{\overline{\mathbb{Z}}_2}_*$ and it was proven by Stong himself in \cite[p. 341]{Stong_manifolds_with_conjugation}.

We prefer to restrict our definition of the magnetic 
equivariant bordism groups
through the stable information associated to the tangent bundle. This choice is aligned with the definition of the unitary equivariant bordism groups 
done in \cite[\S 2]{Hankes}
and that has been used by the last author \cite{AngelGomezUribe, Uribe_evenness, AngelSampertonSegoviaUribe} and others \cite{Sinha-semi-free}.  Note that the
restriction of a stably magnetic $G$-equivariant vector bundle $TM \oplus \theta$ to the subgroup $G_0$ is  simply  $TM \oplus \mathbb{R}^{N-2n}$
since $G_0$ acts trivially on $\theta$. 
This is the reason why the definitions
 of the unitary equivariant bordism groups and the magnetic equivariant bordism groups are compatible.

Some of the properties of the bordism groups  $\Omega^{AR}_*$ also hold in the magnetic bordism $\mathbf{\Omega}^{\overline{\mathbb{Z}}_2}_*$. Here we will explain how some of the proofs of Stong \cite{Stong_manifolds_with_conjugation} work in our setup.
We must remark however, that a non-trivial mistake was found in the main theorem of Stong's paper \cite{Stong_manifolds_with_conjugation},  and the author himself asked to disregard the whole contents of his work \cite{Stong_error_conjugation}. 
Here we will reestablish some of the results of Stong's paper in the context of magnetic equivariant bordism $\mathbf{\Omega}^{\overline{\mathbb{Z}}_2}_*$, and we will add some of our own.
\end{remark}

Noting that the generators
of the unitary bordism ring can be taken to be complex manifolds, we obtain the following simple result:
\begin{lemma}
    The forgetful homomorphism
    \begin{align} \label{forgetful_map_FU}
F_U:\mathbf{\Omega}^{\overline{\mathbb{Z}}_2}_* \to \Omega^{U}_* 
    \end{align}
    that takes a magnetic unitary bordism class to the unitary bordism class,
    is a split homomorphism of rings.
\end{lemma}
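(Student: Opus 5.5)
To show that $F_U$ is a split homomorphism of rings, I will construct a ring homomorphism $s:\Omega^U_*\to \mathbf{\Omega}^{\overline{\mathbb{Z}}_2}_*$ with $F_U\circ s=\mathrm{id}$. I would begin by observing that $F_U$ is a ring homomorphism. The product in $\mathbf{\Omega}^{\overline{\mathbb{Z}}_2}_*$ is the cartesian product with the diagonal involution, the product involution $\tau_*\times\tau'_*$, and the product stably almost complex structure. Forgetting $\tau$ and $\tau_*$ is visibly compatible with disjoint unions, products and bordisms, and it sends the unit (a point with complex conjugation on $\mathbb{C}$) to the unit.

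For the splitting I will use property (a) of \S1: $\Omega^U_*\cong\mathbb{Z}[y_1,y_2,\dots]$ with $y_j$ in degree $2j$, and by property (b) the generators $y_j$ can be chosen to be complex projective algebraic varieties defined over $\mathbb{R}$. By Milnor's theorem, suitable $\mathbb{Z}$-linear combinations of the classes $[\mathbb{P}_\mathbb{C}^n]$ and $[H_\mathbb{C}(m,n)]$ (with $m\le n$) serve as polynomial generators. The key point is that each $\mathbb{P}_\mathbb{C}^n$ and $H_\mathbb{C}(m,n)$ carries the conjugation $\tau$ given by complex conjugation of homogeneous coordinates. This $\tau$ preserves $H_\mathbb{C}(m,n)$ because the defining equation $\sum w_iz_i=0$ has real coefficients, and its differential $\tau_*$ anticommutes with the integrable complex structure $J$. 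Taking $\theta=0$, the triple $(M,J,\tau_*)$ is therefore a magnetic unitary manifold, and it defines a class $\widetilde{M}\in\mathbf{\Omega}^{\overline{\mathbb{Z}}_2}_*$ with $F_U(\widetilde{M})=[M]$.

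Given each generator $y_j=\sum_i a_{ij}[M_{ij}]$ with $a_{ij}\in\mathbb{Z}$ and $M_{ij}$ among the varieties above, I set $\widetilde{y}_j=\sum_i a_{ij}\widetilde{M}_{ij}$. Here negatives are taken using the additive inverse in $\mathbf{\Omega}^{\overline{\mathbb{Z}}_2}_*$ described above, namely $J\oplus(-i)$ together with $\tau_*\oplus\mathbb{K}$, which forgets to $-[M]$ in $\Omega^U_*$. Since $\Omega^U_*$ is a free commutative polynomial ring, the assignment $y_j\mapsto\widetilde{y}_j$ extends uniquely to a ring homomorphism $s$. This requires $\mathbf{\Omega}^{\overline{\mathbb{Z}}_2}_*$ to be graded commutative, which holds because its generators sit in even degrees and the swap diffeomorphism $M\times N\cong N\times M$ is compatible with the involutions. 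Because $F_U\circ s$ is a ring homomorphism that agrees with the identity on generators, $F_U\circ s=\mathrm{id}$.

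The main obstacle is the bookkeeping of signs and stabilizations. First, I must check that $F_U$ really sends the additive inverse and the products to the corresponding unitary classes, which needs the stable equivalence $J\oplus(-i)\sim$ the opposite orientation convention. Second, since the statement concerns rings, I must make sure $s$ is well defined on the polynomial ring, which rests on commutativity in $\mathbf{\Omega}^{\overline{\mathbb{Z}}_2}_*$. If a concern arises with odd degrees, it can be sidestepped by noting that all the chosen generators $\widetilde{y}_j$ live in even degrees, so they commute.
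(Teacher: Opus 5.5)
Your proposal is correct and follows the same route as the paper. Both proofs lift the algebraic generators $\mathbb{P}_\mathbb{C}^n$ and $H_\mathbb{C}(m,n)$ to magnetic unitary classes by equipping them with complex conjugation $\mathbb{K}$ of homogeneous coordinates, so that $F_U$ returns each lift to the original unitary class. Your extra step improves on the paper in one respect. You choose genuine polynomial generators $y_j$ as integer combinations of these varieties and check that their even-degree lifts commute, and this is what makes the assignment extend to a well-defined ring homomorphism; the paper's assignment on a redundant generating set leaves that point implicit.
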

\begin{proof}
    The Milnor hypersurfaces $H_\mathbb{C}(m,n)$, $1 \leq m \leq n$, together with the complex projective spaces $\mathbb{P}_\mathbb{C}^{s}$ generate the unitary bordism ring. All these manifolds are smooth complex projective algebraic varieties and can be endowed with the conjugation action $\mathbb{K}$ coming from the conjugation of  the ground field $\mathbb{C}$. Denote them by the pairs $(H_\mathbb{C}(m,n), \mathbb{K})$ and $(\mathbb{P}_\mathbb{C}^{s}, \mathbb{K})$ and take them as representatives of classes in the magnetic unitary bordism group. Therefore, the assignment
    \begin{align}
        \Omega^{U}_*& \to \mathbf{\Omega}^{\overline{\mathbb{Z}}_2}_* \\
        [H_\mathbb{C}(m,n)] & \mapsto [(H_\mathbb{C}(m,n), \mathbb{K} )]\\
        [\mathbb{P}_\mathbb{C}^{s}] & \mapsto [(\mathbb{P}_\mathbb{C}^{s}, \mathbb{K})]
    \end{align}
    provides the ring splitting of the forgetful homomorphism $F_U$.
\end{proof}

To study the magnetic unitary bordism groups we will make use of the long exact sequence associated to the pair of families  $\mathcal{A}$ and $\{\{1\}\}$ of subgroups of $\overline{\mathbb{Z}}_2$, where the first one has all subgroups $\mathcal{A} = \{ \overline{\mathbb{Z}}_2,\{1\}\}$, and the second one only the trivial one. This has been masterfully explained in \cite[\S 28]{Conner_Floyd_Differentiable}  and defines the long exact sequence:
\begin{align}\label{long_exact_sequence_restricted_unrestricted}
   \cdots \to \mathbf{\Omega}^{\overline{\mathbb{Z}}_2}_*\{\{1\}\} \overset{\iota}{\to} \mathbf{\Omega}^{\overline{\mathbb{Z}}_2}_* \overset{\Psi}{\to} \mathbf{\Omega}^{\overline{\mathbb{Z}}_2}_*\{\mathcal{A}, \{1\}\} \overset{\partial}{\to} \mathbf{\Omega}^{\overline{\mathbb{Z}}_2}_{*-1}\{\{1\}\} \to \cdots
\end{align}
where $\mathbf{\Omega}^{\overline{\mathbb{Z}}_2}_*\{\{1\}\}$ consists of the magnetic unitary bordism group of manifolds whose conjugation is free, and 
$\mathbf{\Omega}^{\overline{\mathbb{Z}}_2}_*\{\mathcal{A}, \{1\}\}$ consists of the magnetic unitary bordism group of manifolds whose boundary has a free action of the conjugation. The map $\iota$ simply sends the magnetic unitary manifolds with free actions to the unrestricted ones. The map $\Psi$ takes a tubular neighborhood of the fixed points of the conjugation map and assigns the unit ball bundle; note that the action of the conjugation is free on the sphere bundle of the normal bundle of the fixed points of the conjugation.

\subsection{Magnetic unitary bordism of free conjugations}
Of particular interest are the magnetic unitary manifolds with free conjugation $\mathbf{\Omega}^{\overline{\mathbb{Z}}_2}_{*}\{\{1\}\}$. Let us describe a series of constructions yielding non-zero stably magnetic manifolds with free conjugations in $\mathbf{\Omega}^{\overline{\mathbb{Z}}_2}_{*}\{\{1\}\}$.

\begin{itemize}
\item {\underline{\it  Twin magnetic unitary manifolds.}} Take a smooth projective manifold $M_{\mathbb{C}}$ defined by homogeneous polynomials with real coefficients  (the generator of the unitary bordism ring $\Omega^U_{2*}$ are of this type). Denote by $\mathbb{K}:M_\mathbb{C} \to M_\mathbb{C}$ the complex conjugation map induced by complex conjugation in the homogeneous coordinates.  Denote the pair $(M_\mathbb{C}, \mathbb{K})$.

Take the disjoint union of two copies of $M$ as complex manifold, 
\begin{align}
    M_\mathbb{C} \sqcup M_\mathbb{C} \coloneqq M \times \{-1\} \cup M \times \{1\},
\end{align}    
    and endow this manifold with the conjugation action:
\begin{align} \label{twin-magnetic-mainfold}
    \mathbb{K} \times(-1) : M_\mathbb{C} \sqcup M_\mathbb{C} &\to M_\mathbb{C} \sqcup M_\mathbb{C} \\
    (m,t) & \mapsto (\mathbb{K}m, -t)
\end{align}
and note that the pair $(M_\mathbb{C}\sqcup M_\mathbb{C},  (\mathbb{K} \times(-1))_*)$ is a magnetic unitary manifold with free conjugation. Call it the {\it twin magnetic unitary manifold}.

The composition  of the homomorphisms of groups
\begin{align}
    \Omega^{U}_{2*}& \longrightarrow  \mathbf{\Omega}^{\overline{\mathbb{Z}}_2}_{2*}\{\{1\}\} \overset{F_U \circ \iota}{\longrightarrow } \Omega^{U}_{2*}\\
    [M_\mathbb{C}] \mapsto & [M_\mathbb{C}\sqcup M_\mathbb{C},  (\mathbb{K} \times(-1))_*]  \mapsto 2[M_\mathbb{C}],
\end{align}
where the second map is the composition of the map $\iota: \mathbf{\Omega}^{\overline{\mathbb{Z}}_2}_{*}\{\{1\}\} \to \mathbf{\Omega}^{\overline{\mathbb{Z}}_2}_{*}$ from magnetic unitary  manifolds with free conjugations to magnetic unitary  manifolds with unrestricted conjugation, and $F_U$ the forgetful map of Eqn \eqref{forgetful_map_FU}, implying that the twin magnetic unitary manifolds 
\begin{align} \label{twin_magnetic_unitary_manifolds}
[M_\mathbb{C}\sqcup M_\mathbb{C},  (\mathbb{K} \times(-1))_*]
\in \mathbf{\Omega}^{\overline{\mathbb{Z}}_2}_{2*}\{\{1\}\}
\end{align}
are all non-trivial.

\item {\underline{\it Opposite twin magnetic unitary manifolds.}} 
For a smooth projective manifold $M_{\mathbb{C}}$ with complex conjugation  $\mathbb{K}: M_\mathbb{C} \to M_\mathbb{C}$  and almost complex structure $J_M$ such that $M_{\mathbb{C}}$ represents a generator of $\Omega^U_{2n}$, we can also take the pair of opposite manifolds:
\begin{align}
    \overline{M}_\mathbb{C} \sqcup {M}_\mathbb{C} \coloneqq \overline{M} \times \{-1\} \cup M \times \{1\},
\end{align}  
where $\overline{M}$ means the same manifold $M$ but with the opposite almost complex structure $-J_M$.

Endow this manifold with the conjugation action:
\begin{align} \label{opposite-twin-magnetic-mainfold}
    \mathrm{id} \times(-1) : \overline{M}_\mathbb{C} \sqcup M_\mathbb{C} &\to \overline{M}_\mathbb{C} \sqcup M_\mathbb{C} \\
    (m,t) & \mapsto (m, -t)
\end{align}
and note that the equation:
\begin{align}
    (-J_M\times\{-1\}) = - (J_M \times \{1\})
\end{align}
implies that the map $\mathrm{id} \times(-1)$ is indeed a conjugation.
The pair $(\overline{M}_\mathbb{C}\sqcup M_\mathbb{C},  (\mathrm{id} \times(-1))_*)$ is a magnetic unitary manifold with free conjugation. Call it the {\it opposite twin magnetic unitary manifold}.

In this case the composition  of the homomorphisms of groups
\begin{align}
    \Omega^{U}_{2n}& \longrightarrow  \mathbf{\Omega}^{\overline{\mathbb{Z}}_2}_{2n}\{\{1\}\} \overset{F_U \circ \iota}{\longrightarrow } \Omega^{U}_{2n}\\
    [M_\mathbb{C}] \mapsto & [\overline{M}_\mathbb{C}\sqcup M_\mathbb{C},  (\mathrm{id} \times(-1))_*]  \mapsto (1+(-1)^n)[M_\mathbb{C}]
\end{align}
depends on the parity of $n$.
In particular, when the complex dimension of $M$ is even, then opposite twin magnetic unitary manifold $(\overline{M}_\mathbb{C}\sqcup M_\mathbb{C},  (\mathrm{id} \times(-1))_*)$ is non-trivial in 
$\mathbf{\Omega}^{\overline{\mathbb{Z}}_2}_{2n}\{\{1\}\}$.

\item {\underline{\it Milnor hypersurfaces with free conjugation.}} The Milnor hypersurfaces $H_\mathbb{C}(m,m)$ defined in Eqn. \eqref{Milnor_hypersurfaces_definition} can be endowed with a free conjugation action as follows:
\begin{align}
    \mathbb{K}_{\mathrm{swap}}: H_\mathbb{C}(m,m) \to H_\mathbb{C}(m,m) \\
    \big([\mathbf{w}], [\mathbf{z}]\big) \mapsto \big([\overline{\mathbf{z}}], [\overline{\mathbf{w}}]\big).
\end{align}

Note that this conjugation is free, because any fixed point of the conjugation would imply that $[\mathbf{w}] = [\overline{\mathbf{z}}]$ and the solution of the equation $z_0\overline{z_0} + \cdots z_m \overline{z_m}=0$ would be empty.

Denote these magnetic unitary manifolds by the pair $(H_\mathbb{C}(m,m), {\mathbb{K}_{\mathrm{swap}}}_*)$ 
and differentiate them from the magnetic unitary manifolds $(H_\mathbb{C}(m,m), \mathbb{K}_*)$ where the conjugation is the complex conjugation and its fixed points are the real Milnor hypersurfaces:
\begin{align}
    H_\mathbb{C}(m,m)^\mathbb{K}= H_\mathbb{R}(m,m).
\end{align}

The magnetic unitary manifolds 
\begin{align} \label{Milnor_hypersurfaces_free}
[H_\mathbb{C}(2^s,2^s) , {\mathbb{K}_{\mathrm{swap}}}_*] \in \mathbf{\Omega}^{\overline{\mathbb{Z}}_2}_{2(2^{s+1}-1)}\{\{1\}\}
\end{align}
define non-trivial elements since their images under the forgetful map $F_U$ are the unitary manifolds $H_\mathbb{C}(2^s,2^s)$ whose characteristic numbers are congruent with 2 modulo 4.
Thus they are generators of the unitary bordism ring modulo 2.

\item {\underline{\it Spheres with antipodal stable conjugation.}}  Take the trivialization of the stabilization of the tangent bundle of the spheres:
\begin{align}
    TS^n \times \mathbb{R} &\overset{\cong}{\to} S^n \times \mathbb{R}^{n+1}\\
    (v,w,r) & \mapsto (v,w+rv)
\end{align}
where $v \in \mathbb{R}^{n+1}$ with $|v|=1$ is a point in $S^n$ and $w \in \mathbb{R}^{n+1}$ with $w \cdot v =0$ represents a point in $T_vS^{n}$. Take the antipodal action $I$ on $\mathbb{R}^{n+1}$ with $I(x)=-x$ and note that the induced action on the stable tangent bundle is:
\begin{align}
  I:  TS^n \times \mathbb{R} \to TS^n \times \mathbb{R}\\
  (v,w,r) \mapsto (-v,-w,r),
\end{align} 
namely, trivial on the normal bundle of the embedding $S^n \hookrightarrow \mathbb{R}^{n+1}$.

Take the stable almost complex structure
\begin{align}
    J_{S^n_{\mathrm{st}}}: TS^n \times \mathbb{R} \times \mathbb{R}^{n+1}   & \to  TS^n \times \mathbb{R} \times \mathbb{R}^{n+1} 
\end{align}
induced from the almost complex structure
\begin{align}
    S^n \times \mathbb{R}^{n+1} \times  \mathbb{R}^{n+1} &\to  S^n \times \mathbb{R}^{n+1} \times  \mathbb{R}^{n+1}\\
    (v,x,y) & \mapsto (v,-y,x).
\end{align}

This almost complex structure can be written as follows:
\begin{align}
    J_{S^n_{\mathrm{st}}}: TS^n \times \mathbb{R} \times \mathbb{R}^{n+1}   & \to  TS^n \times \mathbb{R} \times \mathbb{R}^{n+1} \\
    (v,w,r,w_1+r_1v) & \mapsto (v,-w_1,-r_1,w+rv)
\end{align}
where  $v \in S^n$, $w,w_1 \in T_vS^n$, $r,r_1 \in \mathbb{R}$, and $y=w_1 + r_1v \in \mathbb{R}^{n+1}$ is written
 in terms of the vector $v$ and 
a complementary component in $T_vS^n$.

Define the lift of the antipodal map $I$  as  the map:
\begin{align}
    {I}_* : TS^n \times \mathbb{R} \times \mathbb{R}^{n+1}   & \to  TS^n \times \mathbb{R} \times \mathbb{R}^{n+1} \\
    (v,w,r,y) & \mapsto (-v,-w,r,y)
\end{align}
which in terms of a tangent and normal decompositions it looks like:
\begin{align}
   {I}_* (v,w,r,w_1+r_1v) = (-v,-w,r,w_1-r_1(-v)),
\end{align}
and note that ${I}_* \circ J_{S^n_{\mathrm{st}}} =- J_{S^n_{\mathrm{st}}} \circ {I}_*$ because we have the following equations:
\begin{align}
    {I}_* \circ J_{S^n_{\mathrm{st}}}(v,w,r,w_1+r_1v) & = {I}_*(v,-w_1,-r_1,w+rv) \\
    &  = (-v,w_1,-r_1,w+rv),\\
       J_{S^n_{\mathrm{st}}} \circ  {I}_* (v,w,r,w_1+r_1v) &=
      J_{S^n_{\mathrm{st}}} (-v,-w,r,w_1+r_1v)\\
      &=
      J_{S^n_{\mathrm{st}}} (-v,-w,r,w_1-r_1(-v))\\
       &  = (-v,-w_1,r_1,-w+r(-v))\\
       &  = (-v,-w_1,r_1,-w-rv).
\end{align}

Further note that the involution ${I}_*$ makes the stable tangent bundle of the sphere into the vector bundle:
\begin{align}
    TS^n \oplus \mathbb{R} \oplus \theta
\end{align}
where $\theta = S^n \times \mathbb{R}^{n+1}$ and the involution on
this bundle is simply
\begin{align}
    \mu : S^n \times \mathbb{R}^{n+1} &\to S^n \times \mathbb{R}^{n+1}\\
    (v,y)& \mapsto (-v, y).
\end{align}
We have therefore that ${I}_*=I \oplus \mu$.

\end{itemize}

The involution ${I}_*$ is therefore a conjugation of the stable almost complex structure $ J_{S^n_{\mathrm{st}}}$ of the sphere $S^n$ covering the antipodal map $I$.
The pair $(S^n,{I}_*)$ thus defines an element in the magnetic unitary bordism group of free conjugations:
\begin{align}
    [S^n,{I}_*] \in \mathbf{\Omega}^{\overline{\mathbb{Z}}_2}_n\{ \{1\}\}.
\end{align}

These spheres with stably antipodal conjugation are non-trivial elements in the magnetic unitary bordism group of free conjugations because their images under the forgetful homomorphism
\begin{align}
\mathbf{\Omega}^{\overline{\mathbb{Z}}_2}_n\{ \{1\}\} & \to {\Omega}^{O,{\mathbb{Z}}_2}_n\{ \{1\}\}\\
     [S^n, {I}_*]  & \mapsto [S^n,I],
\end{align}
are non-trivial. Here we have that the canonical embeddings $\mathbb{P}_\mathbb{R}^k \to B\mathbb{Z}_2$ are non-trivial elements in ${\Omega}^{O}_k(B\mathbb{Z}_2) \cong {\Omega}^{O,{\mathbb{Z}}_2}_k\{ \{1\}\}$.

Note furthermore, that this stably structure on the sphere can be seen as the restriction of a stably magnetic structure on the $(n+1)$-dimensional unit ball $B^{n+1}$. The involution on the ball maps $v$ to $-v$ and the conjugation map and the almost complex structure on the stable tangent structure are the following:
\begin{align}
    TB^{n+1} \times \mathbb{R}^{n+1} &\to  TB^{n+1} \times \mathbb{R}^{n+1} \\
    (v, x,y) & \overset{\widetilde{I}}{\mapsto} (-v,-x,y) \\
    (v, x,y) & \overset{J}{\mapsto} (v,-y,x),
\end{align}
for $v \in B^{n+1}$, $x, y \in \mathbb{R}^{n+1}$. Here we assume that $TB^{n+1} = B^{n+1}\times \mathbb{R}^{n+1}$. 

The map of stable vector bundles:
\begin{align}
    TS^n \times \mathbb{R} \times \mathbb{R}^{n+1} &\to TB^{n+1} \times \mathbb{R}^{n+1}\\
    (v,w,r,y)& \mapsto (v,w+rv,y)
\end{align}
is compatible with the almost complex structures and the conjugation maps. Therefore the magnetic unitary manifold with boundary $(B^{n+1}, \widetilde{I})$, which defines an element in $\mathbf{\Omega}^{\overline{\mathbb{Z}}_2}_{n+1}\{\mathcal{A},\{1\}\}$, has the magnetic unitary sphere $(S^{n}, {I}_*)$ as its boundary:
\begin{align}
    \mathbf{\Omega}^{\overline{\mathbb{Z}}_2}_{n+1}\{\mathcal{A},\{1\}\} & \overset{\partial}{\to} \mathbf{\Omega}^{\overline{\mathbb{Z}}_2}_{n}\{\{1\}\} \\
    [B^{n+1}, \widetilde{I}] & \mapsto [S^{n},{I}_*].
\end{align}

Note that the map 
\begin{align}
    TB^{n+1} \times \mathbb{R}^{n+1} &\to  TB^{n+1} \times \mathbb{R}^{n+1} \\
    (v, x, y) & \mapsto (v,x,-y) 
\end{align}
induces a unitary isomorphism between the  almost complex structures $J$ and $-J$ which is compatible with the conjugation map $\widetilde{I}$. So the triples 
$(B^{n+1}, \widetilde{I}, J)$ and
$(B^{n+1}, \widetilde{I}, -J)$ are isomorphic.

\subsection{Smith homomorphism}
A very interesting map in the magnetic unitary bordism groups of free conjugations is the {\it Smith homomorphism}: 
\begin{align}
    \Delta: \mathbf{\Omega}^{\overline{\mathbb{Z}}_2}_{*+1}\{\{1\}\} \to \mathbf{\Omega}^{\overline{\mathbb{Z}}_2}_{*}\{\{1\}\}.
\end{align}
This is the generalization of the Smith homomorphism in the unoriented $\mathbb{Z}_2$-equivariant bordism groups with free involutions defined 
in \cite[\S 26]{Conner_Floyd_Differentiable}.

Allow us describe this map. Take a unitary magnetic manifold $M$ with free conjugation $\tau$. Embed $M$
$\mathbb{Z}_2$-equivariantly into a sphere $S^n$ endowed with the antipodal action, i.e.,  $M\hookrightarrow S^n$. 
Adjust the embedding such that the image of  $M$ is transverse to $S^{n-1}$ in $S^n$ and take the intersection of the image of $M$ with $S^{n-1}$ in $S^n$. Denote this intersection by 
$N \subset M$. Since $S^{n-1}$ is also a $\mathbb{Z}_2$-space, then $\mathbb{Z}_2$ acts freely on $N$. The 
  normal bundle of $N$ in $M$  is trivial. Moreover, the group $\mathbb{Z}_2$ acts on this normal bundle via a fixed one-dimensional representation. Hence $TM|_{N}\cong TN \oplus \mathbb{R}_\pm$ as a $\mathbb{Z}_2$-equivariant real vector bundle, and therefore any magnetic unitary structure on $M$ restricts to one on $N$.

The restriction of the isomorphism $TM \oplus \theta \cong \xi_\mathbb{C}$ to $N$ gives the isomorphism $TN \oplus \mathbb{R}_{\pm} \oplus \theta|_N \cong \xi_\mathbb{C}|_N $ and therefore, $N$ 
is endowed with a stable magnetic unitary structure. The assignment $\Delta(M)=N$ is the Smith homomorphism.

Note that the sphere with antipodal conjugation $(S^{n+1},{I}_* )$ maps to $(S^{n},{I}_*)$ under the Smith homomorphism:
\begin{align}
    \Delta(S^{n+1},{I}_*) = (S^{n},{I}_*),
\end{align}
and iterating this map we can obtain all spheres with antipodal stable conjugation.

These Smith homomorphisms are compatible with the ones in unoriented $\mathbb{Z}_2$-equivariant bordism fitting into the following commutative diagram:
\begin{align}
\xymatrix{
\mathbf{\Omega}^{\overline{\mathbb{Z}}_2}_k\{ \{1\}\} \ar[r]^\Delta \ar[d]& \mathbf{\Omega}^{\overline{\mathbb{Z}}_2}_{k-1}\{ \{1\}\} \ar[r]^\Delta \ar[d] & \cdots \ar[r]^\Delta &\mathbf{\Omega}^{\overline{\mathbb{Z}}_2}_{0}\{ \{1\}\} \ar[d]\\
    {\Omega}^{O,{\mathbb{Z}}_2}_k\{ \{1\}\} \ar[r]^\Delta & {\Omega}^{O,{\mathbb{Z}}_2}_{k-1}\{ \{1\}\} \ar[r]^\Delta & \cdots \ar[r]^\Delta &  {\Omega}^{O,{\mathbb{Z}}_2}_0\{ \{1\}\}
    }
\end{align}
where the vertical maps forget the unitary structure and keep the conjugations as involutions.

Having described the Smith homomorphism, we are ready to present an exact sequence that incorporates both the Milnor map and the Smith homomorphism. This has been developed by Stong \cite{Stong_manifolds_with_conjugation} in the context of AR bordism theory.

\subsection{Exact sequence for the Milnor map}
It turns out that the cokernel of the Smith homomorphism is isomorphic to the kernel of the Milnor map. 
This fact allows us to introduce the following exact sequence incorporating both the Smith homomorphism and the Milnor map.

\begin{theorem} \label{theorem_sequences_Milnor_map}
    The sequences of homomorphisms:
    \begin{align}
      \mathbf{\Omega}^{\overline{\mathbb{Z}}_2}_{2*+1}\{ \{1\}\} \overset{\Delta}{\longrightarrow} \mathbf{\Omega}^{\overline{\mathbb{Z}}_2}_{2*}\{ \{1\}\}  \overset{F_U \circ \iota}{\longrightarrow} \Omega^{U}_{2*} \overset{\mu}{\longrightarrow} \Omega^{O}_* & \longrightarrow 0 \\
      \mathbf{\Omega}^{\overline{\mathbb{Z}}_2}_{2*}\{ \{1\}\} \overset{\Delta}{\longrightarrow} \mathbf{\Omega}^{\overline{\mathbb{Z}}_2}_{2*-1}\{ \{1\}\}  \longrightarrow 0 &
    \end{align}
    are exact.
\end{theorem}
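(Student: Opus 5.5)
The plan is to split the statement into the claim $\mu\circ F_U\circ\iota=0$ and three exactness claims: at $\Omega^U_{2*}$, at $\mathbf{\Omega}^{\overline{\mathbb{Z}}_2}_{2*}\{\{1\}\}$, and surjectivity of $\Delta$ onto odd degrees. Surjectivity of $\mu$ was already noted in \S1.

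\textbf{Step 1: $\mu\circ F_U\circ\iota=0$.} Let $(M,\tau_*)$ have free conjugation. Prop. \ref{generalization_M=FxF} with $k=0$ and $F=M^\tau=\emptyset$ gives $[M]_2=[\emptyset\times\emptyset]_2=0$ in $\Omega^O_*$. We also have the factorization $F=(\wedge 2)\circ\mu$, i.e. $[V]_2=\mu([V])^2$, recorded after Lem. \ref{lemma_kernel_mu}. Since $\Omega^O_*$ is a polynomial algebra over $\mathbb{Z}_2$, the Frobenius $x\mapsto x^2$ is injective. Hence $\mu(F_U\iota[M,\tau_*])=0$.

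\textbf{Step 2: $\ker\mu$ lies in the image.} By Lem. \ref{lemma_kernel_mu}, $\ker\mu$ is generated by $2\Omega^U_{2*}$ and by the ideal spanned by the $[H_\mathbb{C}(2^{s-1},2^{s-1})]$.
\begin{itemize}
\item The twin magnetic manifolds (\ref{twin_magnetic_unitary_manifolds}) hit $2[M_\mathbb{C}]$ for every smooth real projective generator. Since $M\mapsto M_\mathbb{C}\sqcup M_\mathbb{C}$ is additive and these generators span $\Omega^U_{2*}$ additively as products, this covers $2\Omega^U_{2*}$.
\item The swap conjugations (\ref{Milnor_hypersurfaces_free}) hit $[H_\mathbb{C}(2^{s-1},2^{s-1})]$.
\item For the ideal I will use that the product of a free-conjugation manifold with any $(X,\mathbb{K})$, where $X$ is a product of the $\mathbb{P}^s_\mathbb{C}$ and $H_\mathbb{C}(m,n)$, again has free conjugation. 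The image of $F_U\circ\iota$ is therefore an $\Omega^U_*$-submodule, using the splitting of $F_U$.
\end{itemize}
Together with Step 1 this gives $\operatorname{im}(F_U\circ\iota)=\ker\mu$.

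\textbf{Step 3: the Gysin-type exact sequence.} The remaining claims come from a magnetic analogue of the Conner--Floyd sequence \cite[\S 26]{Conner_Floyd_Differentiable} for free involutions:
\begin{align*}
\cdots\to\mathbf{\Omega}^{\overline{\mathbb{Z}}_2}_{n+1}\{\{1\}\}\overset{\Delta}{\to}\mathbf{\Omega}^{\overline{\mathbb{Z}}_2}_{n}\{\{1\}\}\overset{F_U\circ\iota}{\longrightarrow}\Omega^U_n\overset{\mathrm{tw}}{\to}\mathbf{\Omega}^{\overline{\mathbb{Z}}_2}_{n}\{\{1\}\}\to\cdots,
\end{align*}
where $\mathrm{tw}$ is the twin construction $X\mapsto (X\sqcup\overline{X}^{\,\tau},\text{swap})$. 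This is the magnetic version of the pair $(D(L),S(L))$ for the sign line bundle $L$ over the free quotient.

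Given this sequence, surjectivity of $\Delta$ onto degree $2*-1$ follows at once, because $\Omega^U_{\mathrm{odd}}=0$.

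For the middle exactness, the inclusion $\operatorname{im}\Delta\subseteq\ker$ is geometric. $\Delta(M)=N$ is the intersection of $M$ with an equator $S^{n-1}$, and $M\cap(\text{upper hemisphere})$ is a unitary manifold with boundary $N$; the conjugation is not needed there. So $F_U\iota\Delta=0$.

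\textbf{Main obstacle: $\ker(F_U\circ\iota)\subseteq\operatorname{im}\Delta$.} Suppose $M=\partial W$ as unitary manifolds. I would form $W\cup_M\tau(\overline{W})$, i.e. glue $W$ to its $\tau$-conjugate copy along $M$ via $\tau$. This yields a closed manifold $Z$ with a free conjugation that swaps the two halves, and $Z$ is $\mathbb{Z}_2$-equivariantly cut by a hypersurface equal to $M$. I would then check that an equivariant map $Z\to S^{n+1}$ can be chosen so that the preimage of the equator is exactly $M$, giving $\Delta[Z]=[M]$.

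The delicate point is the stable magnetic structure along the gluing. The normal line of $M$ in $Z$ carries the sign representation, and this must match $TM\oplus\mathbb{R}_\pm$ in the definition of $\Delta$. This is where I expect Stong's original argument to have needed care, and where I would check the details most carefully, using the isomorphism $(B^{n+1},\widetilde I,J)\cong(B^{n+1},\widetilde I,-J)$ to absorb the sign.
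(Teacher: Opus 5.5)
Your proposal is correct and follows essentially the paper's route. It uses the same splitting $M=M_+\cup M_-$ for $\operatorname{im}\Delta\subseteq\ker(F_U\circ\iota)$. It uses the same gluing of $W$ to a conjugate copy along $N$ via $\tau$ for the reverse inclusion; this is the paper's $(W\times\{-1\},-J)\sqcup_\phi(W\times\{1\},J)$ with the swap $\tilde\tau$. And it uses the same twins, $\mathbb{K}_{\mathrm{swap}}$-hypersurfaces and module structure for $\ker\mu\subseteq\operatorname{im}(F_U\circ\iota)$.

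\textbf{Step 1.} Here you improve on the paper: you actually prove $\operatorname{im}(F_U\circ\iota)\subseteq\ker\mu$, which the paper only asserts. Since $\mu$ is defined on generators, ``the real points of a free conjugation are empty'' needs an argument like yours. You use $[M]_2=0$. You use $[V]_2=\mu([V])^2$, which holds because both sides are additive ring maps agreeing on the Milnor generators by Conner--Floyd. And you use that squaring is injective on $\mathbb{Z}_2[x_k]$. One citation should change. For $[M]_2=0$, use Lem. \ref{lema_bordism_Z2k} with empty fixed set, or the mapping cylinder $M\times[-1,1]/(m,t)\sim(\tau m,-t)$. Prop. \ref{generalization_M=FxF} is stated for a genuine almost complex structure, not a stable one.

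\textbf{Step 3.} The magnetic Gysin sequence with the map $\mathrm{tw}$ is never established, and you do not need it. The only exactness you use is at $\mathbf{\Omega}^{\overline{\mathbb{Z}}_2}_{2*-1}\{\{1\}\}$. Your gluing argument for $\ker(F_U\circ\iota)\subseteq\operatorname{im}\Delta$ does not depend on degree. Combined with $\Omega^U_{\mathrm{odd}}=0$, it gives surjectivity of $\Delta$ in odd degrees directly. This is how the paper's proof implicitly covers the second sequence, so you should drop the Gysin framing.

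\textbf{Gluing.} The point you flag, namely how the conjugation acts on the normal line of $N$ when gluing the stable structures, is a genuine detail. The paper passes over it as well.
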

\begin{proof}
    Let us start showing that the image of the Smith homomorphism produces unitary manifolds which bound; that is: $\mathrm{im}(\Delta) \subset\mathrm{ker}(F_U \circ \iota)$.
Take any magnetic unitary manifold $M$ with free conjugation and let $N=\Delta(M)$ be the image of the Smith homomorphism. By construction of the Smith homomorphism the manifold $M$ can be split into two manifolds with boundary $M_+$ and $M_-$, with $N= \partial M_-=\partial M_+$ and with the conjugation involution interchanging $M_+$ and $M_-$. The manifold with boundary $M_+$ inherits a unitary structure from $M$ and bounds $N$. Hence we have proven:
 \begin{align}
 \mathrm{im}(\Delta) \subset\mathrm{ker}(F_U \circ \iota).
 \end{align}

Take now any magnetic unitary manifold $N$ with free conjugation that bounds as a unitary manifold $\partial W=N$, and let us show that $N$ is in the image of the Smith homomorphism; that is:
$\mathrm{ker}(F_U \circ \iota) \subset \mathrm{im}(\Delta)$.
Denote by $\tau:N \to N$ the conjugation of $N$ and by $J$ the stable almost complex structure of $W$. Consider the unitary manifolds $(W \times \{-1\},-J)$ and $(W \times \{1\}),J)$ with opposite stable almost complex structures and 
consider the map:
\begin{align}
 \phi:   N \times \{-1\} &\to N \times \{1\}\\
    (n,-1)  & \mapsto (\tau(n),1).
\end{align}
Since on the stable tangent bundle of $N$ we have the equation $\tau_* \circ J|_N = -J|_N \circ \tau_*$, we see that the gluing 
\begin{align} \label{gluing 2W makes M}
M= (W \times \{-1\} )\underset{\phi}{\sqcup} ( W\times \{1\})
\end{align}
possesses a stable almost complex structure $J_M$ satisfying the equations:
\begin{align} \label{equations_of_J_M}
(J_M)|_{W\times\{1\}}=J \ \ \mathrm{and}\  \  (J_M)|_{W\times\{-1\}}=-J.
\end{align}The involution:
\begin{align} \label{involution_tilde-tau}
    \tilde{\tau}: M & \to M\\
    (w,\pm 1) & \mapsto (w, \mp 1)
\end{align}
restricts to the involution $\tau$ on $N$, and the equation $\tilde{\tau}_* \circ J_M = -J_M \circ \tilde{\tau}_*$
follows from Eqns. \eqref{equations_of_J_M}. Therefore $M$ is a magnetic unitary manifold with free conjugation.

Embed equivariantly $N$ to some sphere $S^L$, and extend this embedding to an equivariant embedding of $M$ into $S^{L+1}$ such that $W \times \{1\}$ lies in the northern hemisphere $S^{L+1}_+$. With this embedding of $M$ it is clear that the image of $M$ under the Smith homomorphism is $N$. Therefore we have that:
\begin{align}
    \mathrm{ker}(F_U \circ \iota) \subset \mathrm{im}(\Delta).
\end{align}

We are just left to prove that the kernel of the Milnor map agrees with the image of the forgetful functor from magnetic unitary bordism of free conjugations to unitary bordism.

First note that for any magnetic unitary manifold endowed with a free involution, its underlying unitary manifold lies in the kernel of the Milnor map. Hence we have:
\begin{align}
    \mathrm{im}(F_U \circ \iota) \subset \mathrm{ker}(\mu).
\end{align}

On the other hand, note that we have explicitly calculated the kernel of the Milnor map $\mu$ in Lem. \ref{lemma_kernel_mu}. This kernel is generated by $2\Omega^{U}_{2*}$ and the Milnor manifolds $H_\mathbb{C}(2^s,2^s)$ for $s\geq 0$. The twin magnetic unitary manifolds of Eqn. \eqref{twin_magnetic_unitary_manifolds}
generate $2\Omega^{U}_{2*}$ once the conjugation is forgotten, and the Milnor hypersurfaces $H_\mathbb{C}(2^s,2^s)$ can be endowed with a free conjugation as described in Eqn. \eqref{Milnor_hypersurfaces_free}. Therefore, the Milnor hypersurfaces
$H_\mathbb{C}(2^s,2^s)$ are in the image of the forgetful map $F_U \circ \iota$. Further, note that one can take the product of the Milnor hypersurfaces with any unitary manifold with conjugation and the resulting manifold is also a free conjugation unitary manifold. Thus, we have just shown that:
\begin{align}
       \mathrm{ker}(\mu) \subset \mathrm{im}(F_U \circ \iota).
\end{align}

\end{proof}

We now present some diagrams involving the previously defined magnetic unitary manifolds. With respect to the unrestricted-restricted long exact sequence we have the assignments:
\begin{align}
\xymatrixrowsep{0mm}
\xymatrix{\mathbf{\Omega}^{\overline{\mathbb{Z}}_2}_{*+1}\{\mathcal{A}, \{1\}\} \ar[r]^\partial & 
\mathbf{\Omega}^{\overline{\mathbb{Z}}_2}_*\{\{1\}\} \ar[r]^\iota &
\mathbf{\Omega}^{\overline{\mathbb{Z}}_2}_* \ar[r]^\Psi &
\mathbf{\Omega}^{\overline{\mathbb{Z}}_2}_*\{\mathcal{A}, \{1\}\} \\
[B^{2n}, \widetilde{I}] \ar@{|->}[r] & [S^{2n-1}, {I}_*] \ar@{|->}[r] & 0 & \\
 & [M\sqcup M,  (\mathbb{K} \times(-1))_*]  \ar@{|->}[r] &  [M\sqcup M,  (\mathbb{K} \times(-1))_*]  \ar@{|->}[r] & 0 \\
& [H_\mathbb{C}(2^s,2^s) , {\mathbb{K}_{\mathrm{swap}}}_*]  \ar@{|->}[r]& [H_\mathbb{C}(2^s,2^s) ,{\mathbb{K}_{\mathrm{swap}}}_*]  \ar@{|->}[r] & 0.
  }
\end{align}

With respect to the exact sequence of Thm. \ref{theorem_sequences_Milnor_map} we have the assignments:
\begin{align}
\xymatrixrowsep{0mm}
\xymatrix{
\mathbf{\Omega}^{\overline{\mathbb{Z}}_2}_{*+1}\{ \{1\}\} \ar[r]^\Delta & \mathbf{\Omega}^{\overline{\mathbb{Z}}_2}_{*}\{ \{1\}\}  \ar[r]^{F_U \circ \iota} & \Omega^{U}_{*} \ar[r]^\mu & \Omega^{O}_*   \\
[S^{n+1}, {I}_*] \ar@{|->}[r] & [S^{n}, {I}_*]  \ar@{|->}[r] & 0 & \\
& [M\sqcup M,  (\mathbb{K} \times(-1))_*]  \ar@{|->}[r] &  2[M]  \ar@{|->}[r] & 0\\
 &  [H_\mathbb{C}(2^s,2^s) , {\mathbb{K}_{\mathrm{swap}}}_*]  \ar@{|->}[r]& [H_\mathbb{C}(2^s,2^s)] \ar@{|->}[r]& 0 \\
 && [H_{\mathbb{C}}(2^k,2t2^k)] \ar@{|->}[r]&  [H_{\mathbb{R}}(2^k,2t2^k)]\\
 &&
[\mathbb{P}_\mathbb{C}^{2r}]  \ar@{|->}[r]& [\mathbb{P}_\mathbb{R}^{2r}] 
 }     
\end{align}

Note that the magnetic unitary manifolds with free conjugation 
$2(H_\mathbb{C}(2^s,2^s) , {\mathbb{K}_{\mathrm{swap}}}_*)$ and 
$(H_\mathbb{C}(2^s,2^s) \sqcup H_\mathbb{C}(2^s,2^s), (\mathbb{K}\times (-1))_*)$ map under $F_U \circ \iota$ to the same unitary manifold. So, in the case that they do not belong to the same bordism class in $\mathbf{\Omega}^{\overline{\mathbb{Z}}_2}_{2(2^{s+1}-1)}\{ \{1\}\}$, there must exist a magnetic unitary manifold with free conjugation $W$ such that
\begin{align}
    \Delta(W) = 2(H_\mathbb{C}(2^s,2^s) , {\mathbb{K}_{\mathrm{swap}}}_*) - (H_\mathbb{C}(2^s,2^s) \sqcup H_\mathbb{C}(2^s,2^s), (\mathbb{K}\times (-1))_*).
\end{align}

Let us consider $0$-dimensional magnetic bordism $\mathbf{\Omega}^{\overline{\mathbb{Z}}_2}_{0}$. Note that a point with $\mathbb{C}$ as stably magnetic bundle generate a copy of $\mathbb{Z}$ in $\mathbf{\Omega}^{\overline{\mathbb{Z}}_2}_{0}$. If we consider instead two points $\{-1,1\}$ with free $\mathbb{Z}_2$-action and we take the class of $\mathbb{C}_1 \sqcup \overline{\mathbb{C}}_{-1}$ for its stably complex vector bundle, we see that the involution $\nu:\mathbb{C}_1 \to \overline{\mathbb{C}}_{-1}$, $\nu(z)= z$ is a conjugation. This magnetic unitary manifold $\mathbb{C}_1 \sqcup \overline{\mathbb{C}}_{-1}$ generates a $2$-torsion class in $\mathbf{\Omega}^{\overline{\mathbb{Z}}_2}_{0}\{\{1\}\}$ which maps to zero in $\mathbf{\Omega}^{\overline{\mathbb{Z}}_2}_{0}$. In this case we have:
\begin{align}
\mathbf{\Omega}^{\overline{\mathbb{Z}}_2}_{0} & \cong \langle [\mathbb{C} , \mathbb{K}_*]\rangle  \cong \mathbb{Z},\\
\mathbf{\Omega}^{\overline{\mathbb{Z}}_2}_{0} \{\{1\}\} & \cong \langle [\mathbb{C} \sqcup \mathbb{C}, {\mathbb{K}_{\mathrm{swap}}}_*], [\mathbb{C} \sqcup \overline{\mathbb{C}},\nu] \rangle \cong \mathbb{Z} \oplus \mathbb{Z}_2.
\end{align}

\subsection{Exact sequence for the $\mathbb{Z}_2$-equivariant Milnor map}

The generalization of the sequences of Thm. \ref{theorem_sequences_Milnor_map} to the magnetic group 
$\mathbb{Z}_2 \times \overline{\mathbb{Z}}_2$ is 
possible due to our calculation in Lem. \ref{lemma_kernel_muZ2} of 
the kernel of the $\mathbb{Z}_2$-equivariant Milnor map 
$\mu^{\mathbb{Z}_2}$.
Here the magnetic group $\mathbb{Z}_2 \times \overline{\mathbb{Z}}_2$ 
means the group $\mathbb{Z}_2 \times {\mathbb{Z}}_2$ with $\phi$ the projection in the second 
$\mathbb{Z}_2$.

Let $\mathbf{\Omega}^{\mathbb{Z}_2\times\overline{\mathbb{Z}}_2}_{*}$ 
be the magnetic $\mathbb{Z}_2\times\overline{\mathbb{Z}}_2$-equivariant bordism groups, and consider the bordism groups where the conjugation (namely the action of the group $\overline{\mathbb{Z}}_2$) is free. For this we simply consider the family of subgroups $\mathcal{F} = \{ \langle (1,0)\rangle, \langle (1,1)\rangle, \{(0,0)\} \}$
which does not contain the subgroup generated by $(0,1)$. The bordism groups $\mathbf{\Omega}^{\mathbb{Z}_2\times\overline{\mathbb{Z}}_2}_{*}\{ \mathcal{F}\}$ of magnetic $\mathbb{Z}_2\times\overline{\mathbb{Z}}_2$-equivariant manifolds whose isotropy groups restrict to $\mathcal{F}$ are precisely the equivariant bordism groups of free conjugations.

In terms of spaces we could take a model for the space $E \mathcal{F}$. Consider $V_+$ and $V_-$ the real irreducible representations of $\mathbb{Z}_2 \times \{0\}$ and consider the spheres $S((V_+\times V_-)^{n})$ where the group $\{0\} \times \overline{\mathbb{Z}}_2$ acts by the antipodal map (multiplication by $-1$). Taking the limit when $n$
 goes to infinity we end up with an infinite sphere $S((V_+\times V_-)^{\infty})$ with an action of the group $\mathbb{Z}_2\times\overline{\mathbb{Z}}_2$. This space is contractible, and the fixed point sets of the elements $(1,0)$ and $(1,1)$ are also contractible, being respectively the infinite spheres $S((V_+)^{ \infty})$ and
$S((V_-)^{ \infty})$. Since the action of $\{0\} \times \overline{\mathbb{Z}}_2$ is free, we see that a model for $E \mathcal{F}$ is this infinite sphere $S((V_+\times V_-)^{ \infty})$. Let us then take
$E \mathcal{F} = S((V_+\times V_-)^{ \infty})$, and thus we have the isomorphisms:
\begin{align}    
\mathbf{\Omega}^{\mathbb{Z}_2\times\overline{\mathbb{Z}}_2}_{*}\{ \mathcal{F}\} \cong \mathbf{\Omega}^{\mathbb{Z}_2\times\overline{\mathbb{Z}}_2}_{*}(E \mathcal{F}) =  \mathbf{\Omega}^{\mathbb{Z}_2\times\overline{\mathbb{Z}}_2}_{*}(S((V_+\times V_-)^{ \infty})).
\end{align}

There are two Smith homomorphisms that can be defined in this setup:
\begin{align} \overline{\Delta}_\pm:   
\mathbf{\Omega}^{\mathbb{Z}_2\times\overline{\mathbb{Z}}_2}_{*+1}\{ \mathcal{F}\} {\to}
\mathbf{\Omega}^{\mathbb{Z}_2\times\overline{\mathbb{Z}}_2}_{*}\{ \mathcal{F}\}.
\end{align}
Take a magnetic $\mathbb{Z}_2\times\overline{\mathbb{Z}}_2$-equivariant manifold $M$ with free conjugation in $\mathbf{\Omega}^{\mathbb{Z}_2\times\overline{\mathbb{Z}}_2}_{*+1}\{ \mathcal{F}\}$, and consider its equivariant classifying map $M \to E\mathcal{F}$.
 This map factors through a finite-dimensional sphere $f: M \to S((V_+ \times V_-)^{ n})$ for some sufficiently large integer $n$
and we want to find its transversal intersection with a $\mathbb{Z}_2\times \{0\}$-invariant sphere $S^{2n-2}$. There are two distinctly different types of 
$\mathbb{Z}_2\times \{0\}$-invariant $(2n-2)$-dimensional spheres in $S((V_+ \times V_-)^{ n})$. We take either the sphere $S(V_+^{n-1} \times V_-^n)$ or the sphere $S(V_+^{n} \times V_-^{n-1})$.

The transversal intersection of $M$ with the sphere $S(V_+^{n} \times V_-^{n-1})$ will be denoted $N_+$ while the transversal intersection with the sphere $S(V_+^{n-1} \times V_-^n)$ will be denoted $N_-$. The Smith homomorphisms are thus defined by the assignments:
\begin{align}
    \overline{\Delta}_+(M) = N_+ \ \ \mathrm{and}  \ \ \overline{\Delta}_-(M) = N_-.
\end{align}

Consider the natural maps:
\begin{align}
\mathbf{\Omega}^{\mathbb{Z}_2\times\overline{\mathbb{Z}}_2}_{*}\{ \mathcal{F}\}
\overset{\overline{\iota}}{\longrightarrow} \mathbf{\Omega}^{\mathbb{Z}_2\times\overline{\mathbb{Z}}_2}_{*}  \overset{\overline{F}_U}{\longrightarrow} \Omega^{U, \mathbb{Z}_2}_*,
\end{align}
where the first map sends restricted bordism to unrestricted bordism, and the second forgets the conjugation. We allow ourselves to claim the following generalization of Thm. \ref{theorem_sequences_Milnor_map}.

\begin{theorem} \label{theorem_sequence_Z2_Milnor_map}
    The sequence of homomorphisms:
    \begin{align}
\mathbf{\Omega}^{\mathbb{Z}_2\times \overline{\mathbb{Z}}_2}_{*+1}\{ \mathcal{F}\} \overset{\overline{\Delta}_++\overline{\Delta}_-}{\longrightarrow} \mathbf{\Omega}^{\mathbb{Z}_2\times \overline{\mathbb{Z}}_2}_{*}\{ \mathcal{F}\}  \overset{\overline{F}_U \circ \overline{\iota}}{\longrightarrow} \Omega^{U, \mathbb{Z}_2}_{2*} \overset{\mu^{\mathbb{Z}_2}}{\longrightarrow} \Omega^{O, \mathbb{Z}_2}_* & \longrightarrow 0 
    \end{align}
are exact. Here $\mu^{\mathbb{Z}_2}$ is the equivariant Milnor map of Eqn. \eqref{equivariant_Milnor-Map} and $\overline{\Delta}_++\overline{\Delta}_-$ denotes the span of the images of the Smith homomorphisms.
\end{theorem}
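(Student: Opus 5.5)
We follow the proof of Thm. \ref{theorem_sequences_Milnor_map}, now keeping track of the extra $\mathbb{Z}_2\times\{0\}$-action, and verify exactness at each of the three spots in turn. Surjectivity of $\mu^{\mathbb{Z}_2}$ is Cor. \ref{Milnor_map_Z2}, so the remaining work is exactness at $\Omega^{U,\mathbb{Z}_2}_{2*}$ and at $\mathbf{\Omega}^{\mathbb{Z}_2\times\overline{\mathbb{Z}}_2}_{*}\{\mathcal{F}\}$.

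\emph{Step 1: $\mathrm{im}(\overline{F}_U\circ\overline{\iota})\subset\ker(\mu^{\mathbb{Z}_2})$.} Let $M$ carry a free conjugation $\tau$ commuting with the $\mathbb{Z}_2$-action. The $\mathbb{Z}_2\times\overline{\mathbb{Z}}_2$-equivariant form of Prop. \ref{generalization_M=FxF} gives $[M]_2=[M^\tau\times M^\tau]_2=0$. To conclude that $\mu^{\mathbb{Z}_2}[M]=0$ we need injectivity of the squaring map on $\Omega^{O,\mathbb{Z}_2}_*$. We would get this from the fixed point description (Lem. \ref{lemma_projective_generatosrs_unoriented}): $\Omega^{O,\mathbb{Z}_2}_*$ embeds into $\bigoplus\Omega^O_*(BO(k))$, which is a free module over the polynomial $\mathbb{Z}_2$-algebra $\Omega^O_*$. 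On this module squaring is the Frobenius, and it is injective because the basis $\xi_{u_1\dots u_n}$ squares to basis elements of the product, read through Stiefel--Whitney numbers. As a fallback, we can avoid squaring altogether: using the commutative diagram in the proof of Cor. \ref{Milnor_map_Z2} and Lem. \ref{lemma_kernel_muZ2}, it suffices to check that every class with free conjugation lies in the ideal $2\Omega^{U,\mathbb{Z}_2}_{2*}+\langle H_\mathbb{C}(2^{s-1},2^{s-1})\rangle$, and this reduces to the reverse inclusion argument below.

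\emph{Step 2: $\ker(\mu^{\mathbb{Z}_2})\subset\mathrm{im}(\overline{F}_U\circ\overline{\iota})$.} By Lem. \ref{lemma_kernel_muZ2} the kernel is generated, as an ideal, by $2\Omega^{U,\mathbb{Z}_2}_{2*}$ and the hypersurfaces $H_\mathbb{C}(2^{s-1},2^{s-1})$ with trivial $\mathbb{Z}_2$-action. For the first summand we use the equivariant twin construction. Each generator $\mathbb{P}_\mathbb{C}(\gamma_{s_1}\times\cdots\times\gamma_{s_k}\oplus\mathbb{C})$ is defined over $\mathbb{R}$, and its $\mathbb{Z}_2$-action commutes with $\mathbb{K}$. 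Hence $M\sqcup M$ with $\mathbb{K}\times(-1)$ is a free-conjugation $\mathbb{Z}_2\times\overline{\mathbb{Z}}_2$-manifold mapping to $2[M]$. For the second summand we take $(H_\mathbb{C}(2^{s-1},2^{s-1}),\mathbb{K}_{\mathrm{swap}})$ with trivial $\mathbb{Z}_2$-action, multiplied by any generator equipped with $\mathbb{K}$. The product conjugation is free because one factor is free, which covers the whole ideal.

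\emph{Step 3: exactness at $\mathbf{\Omega}^{\mathbb{Z}_2\times\overline{\mathbb{Z}}_2}_*\{\mathcal{F}\}$.} For $\mathrm{im}\subset\ker$: the manifold $N_\pm=\overline{\Delta}_\pm(M)$ is cut out as the preimage of the equator $S(V_+^n\times V_-^{n-1})$ (respectively $S(V_+^{n-1}\times V_-^n)$) of $S((V_+\times V_-)^n)$, which divides it into two hemispheres. Both hemispheres are $\mathbb{Z}_2\times\{0\}$-invariant, and the conjugation swaps them. The preimage $M_+$ is therefore a $\mathbb{Z}_2$-equivariant unitary null-bordism of $N_\pm$. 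For $\ker\subset\mathrm{im}$: if $N=\partial W$ $\mathbb{Z}_2$-equivariantly, form the double $M=(W,-J)\cup_\tau(W,J)$ as in Eqn. \eqref{gluing 2W makes M}, with the swap involution $\tilde\tau$, which commutes with the $\mathbb{Z}_2$-action. Then extend the classifying map of $N$ into $S(V_+^n\times V_-^{n-1})$ to $M$ by pushing $W\times\{\pm1\}$ into the two hemispheres cut by the last $V_-$-coordinate. This exhibits $N=\overline{\Delta}_+(M)$.

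The main obstacle is the last construction. The separating coordinate must be chosen in a representation ($V_+$ or $V_-$) compatible with how $\mathbb{Z}_2\times\{0\}$ acts on the normal line of $N$ in $M$. That normal line can carry either sign, and possibly different signs on different components of $N$. This is exactly why both $\overline{\Delta}_+$ and $\overline{\Delta}_-$ are needed. We would therefore split $N$ by the sign of this normal representation and realize the two parts as $\overline{\Delta}_+$ and $\overline{\Delta}_-$ images respectively, checking equivariant transversality componentwise.
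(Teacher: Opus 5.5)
The genuine gap is in Step 3, in your construction for $\ker(\overline{F}_U\circ\overline{\iota})\subset\mathrm{im}$. Write $g$ for the generator of $\mathbb{Z}_2\times\{0\}$. It acts by $+1$ on $V_+$ and by $-1$ on $V_-$. So it preserves the two hemispheres of $S((V_+\times V_-)^n)$ separated by a $V_+$-coordinate, but \emph{exchanges} the two hemispheres separated by a $V_-$-coordinate; the antipodal conjugation exchanges them in both cases. In the double $M=(W,-J)\cup_\tau(W,J)$, $g$ preserves each copy $W\times\{\pm1\}$ while $\tilde\tau$ swaps them. Hence no equivariant map can send $W\times\{1\}$ into the hemisphere where the last $V_-$-coordinate is positive: $f(gw,1)$ must lie in that hemisphere, while $gf(w,1)$ lies in the opposite one. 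The separating coordinate has to be a $V_+$-coordinate. This is what the paper does: it places $W$ in the half space where the last $V_+$-coordinate is nonnegative and realizes $N$ as the preimage of $S(V_+^{n-1}\times V_-^n)$. The paper's text calls this $\overline{\Delta}_+$, although by its own definition that preimage is $N_-$; what matters is that it is the $V_+$-cut. The same point shows your ``main obstacle'' is misdiagnosed. The normal line of $N$ in the double always has trivial $g$-action and sign $\tilde\tau$-action, so there is no componentwise sign to choose and no splitting of $N$ is needed. The $V_+$-cut Smith map alone gives the inclusion.

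The same observation undermines your $\mathrm{im}\subset\ker$ argument for the other Smith map. For the equator $S(V_+^n\times V_-^{n-1})$, the two halves of $M$ are exchanged by $g$ and preserved only by $g\tau$. So $M_+$ is not a $\mathbb{Z}_2$-equivariant unitary null-bordism of $N_+$. Indeed, the normal line of $N_+$ in $M$ is $g$-odd, so $N_+$ does not even inherit a stabilization on which $G_0$ acts trivially. Your halving argument, like the paper's one-line version of it, is valid only for the $V_+$-cut.

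Steps 1 and 2 are fine and follow the paper. In Step 1, your Frobenius argument justifies what the paper merely asserts: $[M]_2=0$ because $\tau$ is free and commutes with $g$; $(\mu^{\mathbb{Z}_2}[M])^2=[M]_2$ by the diagram of Cor.~\ref{Milnor_map_Z2}; and squaring is injective because $\Omega^{O,\mathbb{Z}_2}_*$ embeds as a subring of the fixed-point ring, which is polynomial over $\mathbb{Z}_2$. Drop the fallback, however. The reverse-inclusion argument shows the ideal lies in the image, not that the image lies in the ideal, so invoking it there is circular.
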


\begin{proof}
    Most of the arguments  in the proof of Thm. \ref{theorem_sequences_Milnor_map}
    can be extrapolated to this equivariant setup. Hence we will be concise.

    Any equivalence class in $\Omega^{U, \mathbb{Z}_2}_{2*}$ is realized by a smooth complex algebraic variety $M$ with $\mathbb{Z}_2$-action.
    The complex conjugation map $\mathbb{K}$ is the conjugation that makes this manifold magnetic equivariant. 
    Take the twin manifold $M \sqcup M$ with their $\mathbb{Z}_2$-action  and endow the pair with the conjugation $\mathbb{K}\times (-1)$ as in Eqn. \eqref{twin-magnetic-mainfold}. 
    The twin manifold $(M \sqcup M,\mathbb{K}\times (-1)) $ defined an element in $\mathbf{\Omega}^{\mathbb{Z}_2\times \overline{\mathbb{Z}}_2}_{*}\{ \mathcal{F}\}$ which maps to $2M$ under the map $\overline{F}_U \circ \overline{\iota}$. This means that $2\Omega^{U, \mathbb{Z}_2}_{2*} \subset \mathrm{im}(\overline{F}_U \circ \overline{\iota})$.

    The Milnor Hypersurfaces $(H_\mathbb{C}(2^s,2^s), \mathbb{K}_{\mathrm{swap}})$ with  free conjugation map and trivial $\mathbb{Z}_2$-action define elements in $\mathbf{\Omega}^{\mathbb{Z}_2\times \overline{\mathbb{Z}}_2}_{*}\{ \mathcal{F}\}$. Therefore the ideal generated by these Milnor hypersurfaces are also in the image of $\overline{F}_U \circ \overline{\iota}$. This means 
    $\Omega^{U, \mathbb{Z}_2}_{2*} \langle H_\mathbb{C}(2^s,2^s)\rangle\subset \mathrm{im}(\overline{F}_U \circ \overline{\iota})$, and by Lem. \ref{lemma_kernel_muZ2} we see that $\mathrm{ker}(\mu^{\mathbb{Z}_2}) \subset \mathrm{im}(\overline{F}_U \circ \overline{\iota})$.

   Now, any manifold with free conjugation maps to zero by the equivariant Milnor map. This implies that $ \mathrm{im}(\overline{F}_U \circ \overline{\iota}) \subset \mathrm{ker}(\mu^{\mathbb{Z}_2})$ and therefore we obtain the desired equation:
   \begin{align}
       \mathrm{im}(\overline{F}_U \circ \overline{\iota}) = \mathrm{ker}(\mu^{\mathbb{Z}_2}).
   \end{align}

The images of the Smith homomorphisms $\overline{\Delta}_\pm$ are in the kernel of the forgetful map $\overline{F}_U \circ \overline{\iota}$.
By construction, if $\overline{\Delta}_\pm(M)=N_\pm$, then
the manifold $M$ can be split into two copies and each one has for boundary the manifold $N_\pm$. Hence $N_\pm$ bounds in $\Omega^{U,\mathbb{Z}_2}_{2*}$. We have then $\mathrm{im}(\overline{\Delta}_\pm) \subset \mathrm{ker}(\overline{F}_U \circ \overline{\iota})$. 

Let us now suppose that $N$ is a manifold in $\mathbf{\Omega}^{\mathbb{Z}_2\times \overline{\mathbb{Z}}_2}_{*}\{ \mathcal{F}\} $ which bounds in $\Omega^{U,\mathbb{Z}_2}_{2*}$. Let $W$ be the $\mathbb{Z}_2$-equivariant unitary manifold which bounds $N$ with $\partial W=N$ and $J_W|_N=J_N$. Define 
$M$ in the same form as in Eqn. \eqref{gluing 2W makes M} and endowed with the same involution $\tilde{\tau}$
as defined in Eqn. \eqref{involution_tilde-tau}. Now we just need to check that the original action of $\mathbb{Z}_2$ in $W$ is compatible with the action of the conjugation $\tilde{\tau}$. But this follows from the definition of $\tilde{\tau}$ in Eqn \eqref{involution_tilde-tau} since we have the equations:
\begin{align}
    \tilde{\tau}(gw, \pm 1) = (gw, \mp 1) = g(\tilde{\tau}(w, \pm 1)),
\end{align}
where $g$ is the generator of the group $\mathbb{Z}_2$.

Therefore $M$ is a magnetic $\mathbb{Z}_2 \times \overline{\mathbb{Z}}_2$-equivariant manifold. Now, find a $\mathbb{Z}_2$-equivariant embedding of $W$ into a representation $(V_+\times V_-)^n$ for some large $n$, such that $\partial W=N$
lands in the codimension 1-subspace
$V_+^{n-1}\times V_-^n$ and moreover, that the map from $N$ to this space 
is $\mathbb{Z}_2 \times \overline{\mathbb{Z}}_2$-equivariant.
Here the action of $\overline{\mathbb{Z}}_2$ in $V_+^{n-1}\times V_-^n$  is via the sign representation.

Assume that $W \cap (V_+^{n-1}\times V_-^n)=N$ and therefore $W$ lies in the upper half space of $(V_+\times V_-)^n$.

Since $0$ is not in the image of $W$ in the vector space $V_+^{n-1}\times V_-^n$, we can use a homotopy in $(V_+ \times V_-)^n \backslash \{0\}$ to obtain a $\mathbb{Z}_2 \times \overline{\mathbb{Z}}_2$-equivariant map from $W$ to the sphere $S((V_+\times V_-)^n)$ such that $N$ is the intersection of that map with the sphere  $S(V_+^{n-1}\times V_-^n)$.
Therefore $N$ is in the image of the Smith homomorphism $\overline{\Delta}_+$.

If instead we find the map of $N$ to be in the codimension 1 representation $V_+^n\times V_-^{n-1}$, then  $N$ would lie in the image of the Smith homomorphism $\overline{\Delta}_-$.

We conclude with the equation:
\begin{align}
    \mathrm{im}(\overline{\Delta}_+ + \overline{\Delta}_-) = \mathrm{ker}(\overline{F}_U \circ \overline{\iota}).
\end{align}

\end{proof}

\section*{Acknowledgments}

Zhi L\"u thanks the Shanghai Institute for Mathematics and Interdisciplinary Sciences (SIMIS) for its financial support. This research was partially  funded by SIMIS under grant number SIMIS-ID-2025-TP, and  by  NSFC through grant number 11971112. Bernardo Uribe acknowledges the  financial support of the Institute of the Mathematical Sciences of the Americas in Miami, USA, of the Max Planck Institute for Mathematics in Bonn, Germany, of the International Center for Theoretical Physics in Trieste, Italy, through its associates program, of the Alexander Von Humboldt Foundation in Bonn, Germany, and of Fudan University in Shanghai, China. 

\section*{Tools and computational resources disclosure}
There was no use of any mathematical assistance AI in the elaboration of this work.
All statements and proofs were devised by the authors themselves. Only grammar and spelling correction from the latex console was employed to ensure proper use of the English language.

\bibliographystyle{alpha}
\bibliography{ref}

\end{document}